\numberwithin{equation}{section}
\theoremstyle{definition}
\newtheorem{definition}{Definition}[section]
\newtheorem{remark}[definition]{Remark}
\newtheorem{example}[definition]{Example}
\theoremstyle{plain}
\newtheorem{theorem}[definition]{Theorem}
\newtheorem{proposition}[definition]{Proposition}
\newtheorem{lemma}[definition]{Lemma}
\newtheorem{corollary}[definition]{Corollary}
\theoremstyle{remark}
\newcommand{\Hom}{\mathsf{Hom}}
\newcommand{\Ext}{\mathsf{Ext}}
\newcommand{\Tor}{\mathsf{Tor}}
\newcommand{\Ch}{\mathsf{Ch}}
\newcommand{\too}{\longrightarrow}
\newcommand{\Mod}{\mathsf{Mod}}
\newcommand{\Ker}{{\rm Ker}}
\newcommand{\op}{{}^{\rm op}}
\newcommand{\dson}{\displaystyle\operatorname*}
\def\@seccntformat#1{%
  \protect\textup{\protect\@secnumfont
    \ifnum\pdfstrcmp{section}{#1}=0 \scshape\bfseries\fi% section # in \scshape and \bfseries
    \ifnum\pdfstrcmp{subsection}{#1}=0 \bfseries\fi% subsection # in \bfseries
    \csname the#1\endcsname
    \protect\@secnumpunct
  }%
}
\begin{document}

\title{Model structures and relative Gorenstein flat modules \\ and chain complexes}
\thanks{2010 MSC: 18G35, 18G55, 13D30, 16E65, 55U15}
\thanks{Key Words: Semi-definable classes, Gorenstein $\mathcal{B}$-flat modules and complexes, Gorenstein AC-flat modules and complexes, relative Gorenstein flat model structures}
\thanks{The first author is supported by research projects from the Ministerio de Econom\'ia y Competitividad of Spain
(MTM2013-46837-P) and by the Fundaci\'on S\'eneca of Murcia (19880/GERM/15)}
\author{Sergio Estrada}
\address[S. Estrada]{Departamento de Matem\'aticas. Universidad de Murcia. Murcia 30100. ESPA\~{N}A}
\email{sestrada@um.es}

\author{Alina Iacob}
\address[A. Iacob]{Department of Mathematical Sciences. Georgia Southern University. Statesboro (GA) 30460-8093. USA}
\email{aiacob@GeorgiaSouthern.edu}

\author{Marco A. P\'erez}
\address[M. A. P\'erez]{Instituto de Matem\'atica y Estad\'istica ``Prof. Ing. Rafael Laguardia''. Universidad de la Rep\'ublica. Montevideo 11300. URUGUAY}
\email{mperez@fing.edu.uy}

\baselineskip=14pt
\maketitle

\begin{abstract}
A recent result by J. {\v{S}}aroch and J. {\v{S}}\v{t}ov\'{\i}\v{c}ek asserts that there is a unique abelian model structure on the category of left $R$-modules, for any associative ring $R$ with identity, whose (trivially) cofibrant and (trivially) fibrant objects are given by the classes of Gorenstein flat (resp., flat) and cotorsion (resp., Gorenstein cotorsion) modules. In this paper, we generalise this result to a certain relativisation of Gorenstein flat modules, which we call Gorenstein $\mathcal{B}$-flat modules, where $\mathcal{B}$ is a class of right $R$-modules. Using some of the techniques considered by {\v{S}}aroch and {\v{S}}\v{t}ov\'{\i}\v{c}ek, plus some other arguments coming from model theory, we determine some conditions for $\mathcal{B}$ so that the class of Gorenstein $\mathcal{B}$-flat modules is closed under extensions. This will allow us to show approximation properties concerning these modules, and also to obtain a relative version of the model structure described before. Moreover, we also present and prove our results in the category of complexes of left $R$-modules, study other model structures on complexes constructed from relative Gorenstein flat modules, and compare these models via computing their homotopy categories. 
\end{abstract}

%\setcounter{tocdepth}{2}
%\tableofcontents

\pagestyle{myheadings}
\markboth{\rightline {\scriptsize S. Estrada, A. Iacob and M. A. P\'{e}rez}}
         {\leftline{\scriptsize Model structures and relative Gorenstein flat modules and chain complexes}}

%%%%%%%%%%%%%%%%%%%%%%%%%%%%%%%%%%%%%
%%%%%%%%%%%%%%%%%%%%%%%%%%%%%%%%%%%%%
%%%%%%%%%%%%%%%%%%%%%%%%%%%%%%%%%%%%%
%%%%%%%%%%%%%%%%%%%%%%%%%%%%%%%%%%%%%

\section*{\textbf{Introduction}}

In a recent paper \cite[Theorem 3.3]{GillespieGF}, James Gillespie constructs a hereditary abelian model structure on the category $\Mod(R)$ of left $R$-modules over a coherent ring, where the cofibrant objects are given by the class of Gorenstein flat modules, and the fibrant objects by the class of cotorsion modules. The method used by the author to obtain this new model structure is described in the general setting of abelian categories, in another paper of his (see \cite[Main Theorem 1.2]{GillespieTriple}). 

One of our goals is to extend Gillespie's \cite[Theorem 3.3]{GillespieGF} to the categories of modules and chain complexes over an arbitrary ring. This result is a consequence of a more general approach dealing with Gorenstein flat modules relative to a class $\mathcal{B}$ of right $R$-modules. This notion carries over to the category of chain complexes, and as a consequence, we shall also obtain the Gorenstein flat model structure on $\Ch(R)$.

We introduce the notion of Gorenstein flat left $R$-modules relative to a class $\mathcal{B} \subseteq \Mod(R\op)$ in Section \ref{sec:modules}. When the class $\mathcal{B}$ contains the injective modules in $\Mod(R\op)$, every Gorenstein $\mathcal{B}$-flat module is, in particular, Gorenstein flat. When $\mathcal{B}$ is the class of injectives, the two classes coincide. We prove some general properties and equivalent characterisations of the Gorenstein $\mathcal{B}$-flat modules. Among other things, we show that the class $\mathcal{GF}_{\mathcal B}(R)$ of Gorenstein $\mathcal{B}$-flat modules is a precovering Kaplansky class. Then we consider the question of closure under extensions for $\mathcal{GF}_{\mathcal B}(R)$. For the absolute Gorenstein flat modules, this question was recently settled in the affirmative over arbitrary rings, by J. {\v{S}}aroch and J. {\v{S}}\v{t}ov\'{\i}\v{c}ek \cite{SarochStovicek}. We give sufficient conditions on the class $\mathcal{B}$ for $\mathcal{GF}_{\mathcal B}(R)$ being closed under extensions. We prove that if $\mathcal{B}$ is a semi-definable class (that is, closed under products and containing an elementary cogenerator of its definable closure - see Section~\ref{sec:modules} for details) then the class of relative Gorenstein flat modules is a left orthogonal class, and therefore it is closed under extensions. Moreover, for any semi-definable class $\mathcal{B}$, the class $\mathcal{GF}_{\mathcal B}(R)$ is closed under direct limits, so it is a covering class. We prove that in this case ($\mathcal{B}$ is semi-definable), $\mathcal{GF}_{\mathcal B}(R)$ is the left half of a hereditary complete cotorsion pair. 

The results mentioned above are also presented in the setting provided by the category $\Ch(R)$ of chain complexes. The definition of Gorenstein flat complexes relative to a class of complexes in $\Ch(R\op)$ will be presented considering a particular tensor product of complexes introduced by J. R. Garc\'ia Rozas in \cite{jrgr}. We shall also study complexes of relative Gorenstein flat modules, and provide a characterisation for them as Gorenstein flat complexes relative to a certain class of complexes in $\Ch(R\op)$. We prove that, for any class of right $R$-modules $\mathcal{B}$, the class of Gorenstein $\Ch(\mathcal{B})$-flat complexes is contained in that of complexes of Gorenstein $\mathcal{B}$-flat modules, where $\Ch(\mathcal{B})$ denotes the class of complexes of modules in $\mathcal{B}$. But, as Remark \ref{rem:strict_inclusion} shows, this inclusion is, in general, a strict one.

Model structures are the main topic of Section \ref{sec:stable}. Assuming closure under extensions for relative Gorenstein flat modules, we shall be able to construct a new model structure on $\Mod(R)$ in which these modules are the cofibrant objects. As a consequence, we obtain Gillespie's Gorenstein flat model structure on $\Mod(R)$ when $R$ is a coherent ring (see \cite{GillespieGF}). Part of our generalisation has to do with one of our corollaries showing that Gillespie's model exists over arbitrary rings. We also prove the existence of the relative Gorenstein flat model structure on the category of complexes. We point out in Remark \ref{rem:scheme} that our method to get the model differs from the one showed in \cite{SarochStovicek} in the absolute case, as ours only relies on the closure under extensions of the class $\mathcal{GF}_{\mathcal B}(R)$.

Section \ref{sec:comparison} is devoted to comparing several model structures on chain complexes associated to relative Gorenstein flat modules. Quillen adjunctions between these models are obtained, and for some of them, their homotopy categories will form an arrangement of triangulated categories and functors known as recollement.

%%%%%%%%%%%%%%%%%%%%%%%%%%%%%%%%%%%%%
%%%%%%%%%%%%%%%%%%%%%%%%%%%%%%%%%%%%%

\subsection*{Notation}

Most of the categories considered in this paper are Grothendieck categories. We shall use the symbol $\simeq$ to represent that two objects in a category are isomorphic, while natural isomorphisms between functors will be denoted using $\cong$. Monomorphisms and epimorphisms in a category may sometimes be denoted by $\rightarrowtail$ and $\twoheadrightarrow$, respectively. Subcategories of a given category are always assumed to be full. 

In what follows, $R$ will be an associative ring with unit $1 \in R$. We denote the categories of left and right $R$-modules by $\Mod(R)$ and $\Mod(R\op)$, respectively. In the case where $R = \mathbb{Z}$ is the ring of integers, we shall denote the category $\Mod(\mathbb{Z}) = \Mod(\mathbb{Z}^{\rm op})$ of abelian groups by $\mathsf{Ab}$. The categories of chain complexes of left and right $R$-modules will be denoted by $\Ch(R)$ and $\Ch(R\op)$, respectively. By ``module'' we usually mean a left $R$-module, and complexes of left $R$-modules will be simply referred to as ``complexes''. 

Recall that by a complex $X \in \Ch(R)$ one means a collection $X = (X_m, \partial^X_m \colon X_m \to X_{m-1})_{m \in \mathbb{Z}}$ of modules $X_m \in \Mod(R)$ and $R$-homomorphisms $\partial^X_m \colon X_m \to X_{m-1}$, called \emph{differentials}, such that $\partial^X_m \circ \partial^X_{m+1} = 0$ for every $m \in \mathbb{Z}$. The cycles and boundaries of $X$ are defined as the modules $Z_m(X) := {\rm Ker}(\partial^X_m)$ and $B_m(X) := {\rm Im}(\partial^X_{m+1})$, respectively. In some situations, we shall need to consider the category $\Ch(\Ch(R))$ of complexes of complexes, whose objects will usually be denoted as $X_\bullet$, that is, $X_m \in \Ch(R)$ for every $m \in \mathbb{Z}$.

Disks and spheres will be recurrent examples of chain complexes in this paper. Recall that given a module $M \in \Mod(R)$ and an integer $m \in \mathbb{Z}$, the \emph{$m$-th disk centred at $M$} is the complex $D^m(M)$ with $M$ at the $m$-th and $(m-1)$-th positions, and $0$ elsewhere, with differential $M \to M$ given by the identity. The \emph{$m$-th sphere centred at $M$} is the complex $S^m(M)$ with $M$ at the $m$-th position and $0$ elsewhere.

%%%%%%%%%%%%%%%%%%%%%%%%%%%%%%%%%%%%%
%%%%%%%%%%%%%%%%%%%%%%%%%%%%%%%%%%%%%
%%%%%%%%%%%%%%%%%%%%%%%%%%%%%%%%%%%%%
%%%%%%%%%%%%%%%%%%%%%%%%%%%%%%%%%%%%%

\section{\textbf{Preliminaries}}

In this section we recall the background material that will be used in the sequel. We also prove some results concerning duality pairs and definable classes of modules and chain complexes.

%%%%%%%%%%%%%%%%%%%%%%%%%%%%%%%%%%%%%
%%%%%%%%%%%%%%%%%%%%%%%%%%%%%%%%%%%%%

\subsection*{Tensor product of complexes}

At some point we shall define relative Gorenstein flat complexes as cycles of complexes of flat complexes satisfying certain acyclicity condition. The latter is specified in terms of tensor products of complexes. There are two well known tensors in this setting, specified below.

Consider the usual tensor product of modules over a ring:
\[
- \displaystyle\operatorname*{\otimes}_R - \colon \Mod(R\op) \times \Mod(R) \too \mathsf{Ab}.
\] 
Now let $X \in \Ch(R\op)$ and $Y \in \Ch(R)$. Recall from \cite[Section 2.1]{jrgr} that the (usual) \emph{tensor product} of $X$ and $Y$ is the chain complex $X \otimes^. Y \in \Ch(\mathbb{Z})$ of abelian groups defined by
\[
(X \otimes^. Y)_m := \bigoplus_{k \in \mathbb{Z}} X_k \displaystyle\operatorname*{\otimes}_R Y_{m-k}
\]
whose differential maps are given by 
\[
x \otimes y \mapsto \partial^X_k(x) \otimes y + (-1)^k x \otimes \partial^Y_{m-k}(y)
\] 
if $x \in X_k$ and $y \in Y_{m-k}$. There is an associated hom functor 
\[
\mathcal{H}{\rm om}(-,-) \colon \Ch(R) \times \Ch(R) \longrightarrow \Ch(\mathbb{Z})
\] 
which maps every pair of complexes $X, Y \in \Ch(R)$ to the complex $\mathcal{H}{\rm om}(X,Y)$ given by 
\[
\mathcal{H}{\rm om}(X,Y)_m := \prod_{k \in \mathbb{Z}} \Hom_R(X_k, Y_{m+k})
\] 
and with differentials 
\[
f \mapsto \partial^Y_{m+k} \circ f_k - (-1)^m f_{k-1} \circ \partial^X_k.
\] 
The \emph{modified tensor product} of $X$ and $Y$ is the complex $X \otimes Y \in \Ch(\mathbb{Z})$ of abelian groups defined by
\[
(X \otimes Y)_m := \frac{(X \otimes^. Y)_m}{B_m(X \otimes^. Y)},
\] 
with differentials given by 
\[
x \otimes y \mapsto \partial^X(x) \otimes y
\] 
(see \cite[Section 4.2]{jrgr} for details). This tensor product has also an associated hom defined from $\mathcal{H}{\rm om}(-,-)$. Namely, there is a functor 
\[
\underline{\Hom}(-,-) \colon \Ch(R) \times \Ch(R) \longrightarrow \Ch(\mathbb{Z})
\] 
mapping every pair of complexes $X, Y \in \Ch(R)$ to a complex $\underline{\Hom}(X,Y)$ given by 
\[
\underline{\Hom}(X,Y)_m := Z_m(\mathcal{H}{\rm om}(X,Y))
\] 
with differentials 
\[
f \mapsto (-1)^m \partial^Y_{m+k} \circ f_k.
\]

%%%%%%%%%%%%%%%%%%%%%%%%%%%%%%%%%%%%%
%%%%%%%%%%%%%%%%%%%%%%%%%%%%%%%%%%%%%

\subsection*{Approximations}

One of the goals of the present paper is to construct approximations by relative Gorenstein flat modules.

Given a Grothendieck category $\mathcal{G}$, a subcategory $\mathcal{A}$ of objects of $\mathcal{G}$, and an object $M \in \mathcal{G}$, recall that a morphism $\varphi \colon A \to M$ is called an \emph{$\mathcal{A}$-precover} (or a \emph{right $\mathcal{A}$-approximation}) of $M$ if $A \in \mathcal{A}$ and if for every morphism $\varphi \colon A' \to M$ there exists $h \colon A' \to A$ such that $\varphi' = \varphi \circ h$. If in addition $h$ is an automorphism of $A$ in the case where $A' = A$ and $\varphi' = \varphi$, then $\varphi$ is called \emph{$\mathcal{A}$-cover} of $M$. The notions of \emph{$\mathcal{A}$-preenvelopes} (\emph{left $\mathcal{A}$-approximations}) and \emph{$\mathcal{A}$-envelopes} are dual. 

Recall also that given a subcategory $\mathcal{A} \subseteq \mathcal{G}$, the \emph{right orthogonal complement} of $\mathcal{A}$ is defined as the class
\[
\mathcal{A}^\perp := \{ N \in \mathcal{G} \mbox{ : } \Ext^1_{\mathcal{G}}(A,N) = 0 \mbox{ for every } A \in \mathcal{A} \}.
\] 
The \emph{left orthogonal complement} ${}^\perp\mathcal{A}$ is defined similarly. An $\mathcal{A}$-precover $\varphi \colon A \to M$ of $M$ is \emph{special} if $\varphi$ is epic and ${\rm Ker}(\varphi) \in \mathcal{A}^\perp$. \emph{Special $\mathcal{A}$-preenvelopes} are defined similarly. 

A  subcategory $\mathcal{A} \subseteq \mathcal{G}$ is \emph{precovering}, \emph{covering} or \emph{special precovering} if every module has an $\mathcal{A}$-precover, an $\mathcal{A}$-cover or a special $\mathcal{A}$-precover, respectively. \emph{Preenveloping}, \emph{enveloping} and \emph{special preenveloping} classes are dual. 

A natural source to obtain left and right approximations is by means of cotorsion pairs. Two  subcategories $\mathcal{A}, \mathcal{B} \subseteq \mathcal{G}$ form a \emph{cotorsion pair} $(\mathcal{A,B})$ in $\mathcal{G}$ if $\mathcal{A} = {}^\perp\mathcal{B}$ and $\mathcal{B} = \mathcal{A}^\perp$. A cotorsion pair $(\mathcal{A,B})$ in $\mathcal{G}$ is \emph{complete} if every object of $\mathcal{G}$ has a special $\mathcal{A}$-precover and a special $\mathcal{B}$-preenvelope. Similarly, $(\mathcal{A,B})$ is said to be \emph{perfect} if every object of $\mathcal{G}$ has an $\mathcal{A}$-cover and a $\mathcal{B}$-envelope. As examples, if $\mathcal{P}(R)$ and $\mathcal{I}(R)$ denote the classes of projective and injective modules, respectively, then $(\mathcal{P}(R),\Mod(R))$ and $(\Mod(R),\mathcal{I}(R))$ are clearly complete cotorsion pairs in $\Mod(R)$. Moreover, if $\mathcal{F}(R)$ denotes the class of flat modules and $\mathcal{C}(R) := (\mathcal{F}(R))^\perp$ is the class of cotorsion modules, then $(\mathcal{F}(R),\mathcal{C}(R))$ is a perfect cotorsion pair in $\Mod(R)$ (see \cite{EJ}, for instance). 

A subcategory $\mathcal{A} \subseteq \mathcal{G}$ is \emph{resolving} if it contains the projective objects of $\mathcal{G}$ and it is closed under extensions and under taking kernels of epimorphisms with domain and codomain in $\mathcal{A}$. \emph{Coresolving} subcategories are defined dually. A cotorsion pair $(\mathcal{A,B})$ in $\mathcal{G}$ is \emph{hereditary} if $\mathcal{A}$ is resolving and $\mathcal{B}$ is coresolving. 

A method to construct left and right approximations from a cotorsion pair $(\mathcal{A,B})$ consists in providing a cogenerating set for $(\mathcal{A,B})$. A result known as the Eklof and Trlifaj Theorem asserts that every cotorsion pair (in any Grothendieck category with enough projectives) \emph{cogenerated} by a set is complete (see Hovey's \cite[Corollary 6.8]{Hovey}, for instance). This means the existence of a set $\mathcal{S} \subseteq \mathcal{A}$ such that $\mathcal{B} = \mathcal{S}^\perp$. 
 
Another useful concept for finding approximations is that of a Kaplansky class. For instance, any Kaplansky class in a Grothendieck category closed under direct limits and direct products is preenveloping. Recall from Gillespie's \cite[Definition 5.1]{GillespieDegreewise} that a subcategory $\mathcal{A}$ of objects in $\mathcal{G}$ is a \emph{Kaplansky class} if there is a regular cardinal $\kappa$ for which the following condition is satisfied: For any subobject $M \subseteq A$, with $A \in \mathcal{A}$ nonzero and $M$ $\kappa$-generated, there exists a $\kappa$-presentable object $S \neq 0$ such that $M \subseteq S \subseteq A$ and $S, A / S \in \mathcal{A}$.

%%%%%%%%%%%%%%%%%%%%%%%%%%%%%%%%%%%%%
%%%%%%%%%%%%%%%%%%%%%%%%%%%%%%%%%%%%%

\subsection*{Duality pairs}

The notion of duality pairs will be useful to provide some characterisations for relative Gorenstein flat modules and complexes. We shall see that, given an exact complex $P$ of projective modules and a duality pair $(\mathcal{A,B})$, checking the acyclicity of $P$ with respect to $\mathcal{B} \otimes_R -$ is equivalent to checking the acyclicity of $P$ but with respect to $\Hom_R(-,\mathcal{A})$ instead.

Recall that the \emph{character module} of a left $R$-module $M \in \Mod(R)$ is defined as the right $R$-module $M^+ := \Hom_{\mathbb{Z}}(M,\mathbb{Q / Z})$. Character modules of right $R$-modules are defined in the same way. 

Two classes $\mathcal{A} \subseteq \mathsf{Mod}(R)$ and $\mathcal{B} \subseteq \mathsf{Mod}(R^{\rm op})$ form a \emph{duality pair} $(\mathcal{A,B})$ (in the sense of \cite{BGH}) provided that: 
\begin{enumerate}
\item $A \in \mathcal{A}$ if, and only if, $A^+ \in \mathcal{B}$, and 

\item $B \in \mathcal{B}$ if, and only if, $B^+ \in \mathcal{A}$. 
\end{enumerate}
For example, $(\mathcal{F}(R),\mathcal{I}(R\op))$ is a duality pair over $R$ provided that $R$ is a left Noetherian ring. Furthermore, if $\mathcal{I}_1(R\op)$ denotes the class of FP-injective right $R$-modules, then $(\mathcal{F}(R),\mathcal{I}_1(R\op))$ is also a duality pair for the case where $R$ is a left coherent ring.

Duality pairs of classes of complexes can be defined in a similar fashion. For, if $X \in \Ch(R)$ is a complex of left $R$-modules, its \emph{character complex} is defined, in the sense of \cite{jrgr}, as the complex of right $R$-modules $X^+ := \underline{\Hom}(X,D^1(\mathbb{Q / Z}))$.

%%%%%%%%%%%%%%%%%%%%%%%%%%%%%%%%%%%%%
%%%%%%%%%%%%%%%%%%%%%%%%%%%%%%%%%%%%%

\subsection*{Definable classes}

Some duality pairs can be constructed from definable classes, as we shall see in the last part of this section. Let us recall from \cite{PrestDefinable} the concept of definable classes in finitely accessible additive categories $\mathcal{G}$ with products, although we shall be interested in the cases where $\mathcal{G}$ is the category $\Mod(R)$ of modules or $\Ch(R)$ of chain complexes. 

Recall that a preadditive category $\mathcal{G}$ is \emph{finitely accessible} if it has direct limits and if the subcategory $\mathcal{G}^{\rm fp}$ of finitely presented objects of $\mathcal{G}$ is skeletally small. Recall also that $X \in \mathcal{G}$ is \emph{finitely presented} if the representable functor $\Hom_{\mathcal{G}}(X,-) \colon \mathcal{G} \longrightarrow \mathsf{Ab}$ preserves direct limits.  

Given a finitely accessible additive category $\mathcal{G}$ with products, a subcategory $\mathcal{D}$ is \emph{definable} if it is closed under products, direct limits and pure subobjects. A sequence 
\[
\rho \colon 0 \to X \to Y \to Z \to 0
\] 
in $\mathcal{G}$ is \emph{pure} if the induced sequence 
\[
\Hom_{\mathcal{G}}(L,\rho) \colon 0 \to \Hom_{\mathcal{G}}(L,X) \to \Hom_{\mathcal{G}}(L,Y) \to \Hom_{\mathcal{G}}(L,Z) \to 0
\] 
in $\mathsf{Ab}$ is exact for every finitely presented object $L \in \mathcal{G}^{\rm fp}$. In this situation, one says that $X$ is a \emph{pure subobject} of $Y$ and that $Z$ is a \emph{pure quotient} of $Y$. In the case $\mathcal{G} = \Ch(R)$, being pure is equivalent to saying that the induced sequence
\[
W \otimes \rho \colon 0 \to W \otimes X \to W \otimes Y \to W \otimes Z \to 0
\]
is exact for every complex of right $R$-modules $W$. The same applies to the case $\mathcal{G} = \Mod(R)$ in terms of $- \otimes_R -$. 

An \emph{elementary cogenerator} of a definable subcategory $\mathcal{D}$ is a pure-injective object $D_0 \in \mathcal{D}$ such that every $D \in \mathcal{D}$ is a pure subobject of some product of copies of $D_0$. In this case, we shall use the notation $\mathcal{D} = {\rm CoGen}_\ast(D_0)$. Here, \emph{pure-injective} means injective with respect to pure exact sequences. In \cite[Theorem 21.7]{PrestDefinable}, M. Prest proves that every definable subcategory of a finitely accessible additive category with products has an elementary cogenerator. The reader can also see \v{S}aroch's \cite[Lemma 5.3]{SarochTools} or Prest's \cite[Corollary 5.3.52]{PrestPurity} for the case of modules over a ring. 

Let us set some special classes of objects constructed from a class $\mathcal{B}$ of objects of a finitely accessible additive category $\mathcal{G}$ with products:
\begin{itemize}
\item We shall denote by $\langle \mathcal{B} \rangle$ the \emph{definable closure} of $\mathcal{B}$, that is, the smallest definable subcategory containing $\mathcal{B}$. For the purposes of the present paper, we shall be interested in subcategories $\mathcal{B}$ containing an elementary cogenerator $B_0$ of $\langle \mathcal{B} \rangle$.

\item ${\rm Prod}(\mathcal{B})$ will denote the subcategory of direct summands of direct products of objects in $\mathcal{B}$.

\item $\mathcal{B}^{\rm p}$ will stand for the subcategory of pure subobjects of objects in $\mathcal{B}$. 

\item By ${\rm PInj}(\mathcal{B})$ we shall mean the subcategory of pure-injective objects in $\mathcal{B}$.

\item In the cases where $\mathcal{G} = \Mod(R)$ or $\mathcal{G} = \Ch(R)$, $\mathcal{B}^+$ will be the class of objects isomorphic to objets of the form $B^+$ with $B \in \mathcal{B}$. 
\end{itemize}

As we mentioned earlier, definable classes are useful to construct duality pairs. Let us specify this in detail below. We shall need the following lemma. Its proof will be commented later in a separate appendix at the end of the paper, since it involves some notions from model theory.

\begin{lemma}\label{lem:Prest}
Let $\mathcal{D}$ be a definable class of modules or chain complexes. Then, $D \in \mathcal{D}$ if, and only if, $D^{++} \in \mathcal{D}$.
\end{lemma}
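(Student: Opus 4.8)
The statement has a trivial direction and a substantive one. If $D \in \mathcal{D}$, then $D^+ \in \mathcal{D}^+$ and applying the character functor once more gives $D^{++}$; to conclude $D^{++} \in \mathcal{D}$ we need to know that $\mathcal{D}$ is closed under double characters, which is exactly what we are trying to prove — so this direction is \emph{not} free, and in fact the whole content is symmetric in disguise. The honest way to organise the argument is to use the well-known \emph{pure embedding} $D \hookrightarrow D^{++}$ (the canonical evaluation map, which is a pure monomorphism in $\Mod(R)$, and in $\Ch(R)$ by the description of purity via $- \otimes -$ recalled just before the lemma). Since a definable class is closed under pure subobjects, $D^{++} \in \mathcal{D}$ immediately forces $D \in \mathcal{D}$. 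That settles the ``if'' direction cleanly, and reduces everything to the forward implication: $D \in \mathcal{D} \implies D^{++} \in \mathcal{D}$.

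For the forward direction, the plan is to exhibit $D^{++}$ as built from $D$ by operations that a definable class tolerates. The standard route is: (i) there is a pure-exact sequence expressing $D^+$ as a pure quotient (equivalently a directed colimit) involving $D$ — more precisely, one uses that $D^{+}$ is a \emph{pure-injective} object and that for any module $M$, $M^{+}$ is a direct summand of $(M^{(\kappa)})^{+}$-type constructions; but the cleanest classical fact is that $D^{++}$ is a \emph{pure quotient of a direct sum (or direct product) of copies of $D$}, via the natural surjection $D^{(I)} \twoheadrightarrow D^{++}$ arising from dualising the pure embedding $D^{+}\hookrightarrow D^{+++}$ split appropriately. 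Since $\mathcal{D}$ is closed under direct sums (being closed under direct limits and finite biproducts), under direct products, and under pure quotients (closed under pure subobjects plus the fact that in a finitely accessible category a pure quotient of an object of $\mathcal{D}$ lies in the definable closure — this is where one invokes that $\mathcal{D}$ is definable, hence closed under pure epimorphic images as well, or alternatively realises the pure quotient as a direct limit of copies of $D$ and uses closure under direct limits), we get $D^{++} \in \mathcal{D}$.

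The cleanest technical device, and the one I expect the authors to use, is the classical result (Prest; see also the references cited for elementary cogenerators) that \emph{a module $N$ lies in a definable class $\mathcal{D}$ if and only if $N$ is a pure submodule of some direct product of modules from $\mathcal{D}$}, combined with the fact that $D^{++}$ is \emph{elementarily equivalent} to $D^{(\aleph_0)}$ or, more usefully, that $D^{++}$ is a pure-injective hull-type object that is a pure submodule of a power of $D$. Concretely: the evaluation map $D \to D^{++}$ being pure means $D^{++}/D$ is again in the picture, and iterating, $D^{+++} = (D^{+})^{++}$ retracts onto $D^{+}$, so dualising, $D^{++}$ is a direct summand of $D^{++++}$... — the point is that there is a split monomorphism (after one more dual) exhibiting $D^{++}$ as a pure subobject of a product of copies of $D$. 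This uses nothing beyond standard homological algebra of character modules plus the adjunction $\Hom(-,\mathbb{Q/Z})$ turning direct sums into products. For chain complexes one runs the identical argument, using the character complex $X^{+} = \underline{\Hom}(X, D^1(\mathbb{Q/Z}))$ and the characterisation of purity in $\Ch(R)$ via $- \otimes -$ quoted in the excerpt; the evaluation map $X \to X^{++}$ is a pure monomorphism for the same formal reason, and direct sums/products and tensor products interact the same way.

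The main obstacle is making precise \emph{why $D^{++}$ is a pure subobject of a product of copies of $D$} (equivalently, why it lies in the definable closure of $\{D\}$ already). This is the one place genuine input is needed rather than formal closure-property bookkeeping; it rests on the model-theoretic fact that $D$ and $D^{++}$ have the same pp-types / are elementarily equivalent in the language of modules (or complexes), which is precisely the kind of statement the authors have flagged as belonging to the model-theory appendix. I expect the proof in the paper to either cite this pp-elementary-equivalence directly or to spell out the split-injection argument $D^{++} \hookrightarrow \prod D$ explicitly; everything else — the pure embedding $D \hookrightarrow D^{++}$, closure of $\mathcal{D}$ under pure subobjects, products, and direct limits — is immediate from the definitions recalled in the excerpt.
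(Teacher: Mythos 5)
Your backward implication is correct and exactly right: the evaluation map $D \to D^{++}$ is a pure monomorphism, and $\mathcal{D}$ is closed under pure subobjects, so $D^{++} \in \mathcal{D}$ gives $D \in \mathcal{D}$. You also correctly pin down the kernel of the forward implication — that $D$ and $D^{++}$ open the same pp-pairs (equivalently, generate the same definable subcategory) — and this is precisely the model-theoretic fact the paper's proof rests on. Where the proposal goes wrong is in the algebraic bookkeeping you interleave before falling back on that fact.

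Two of your intermediate claims should be discarded. First, the sketch that $D^{++}$ is a pure quotient of a direct sum of copies of $D$ ``via the natural surjection $D^{(I)} \twoheadrightarrow D^{++}$ arising from dualising $D^+ \hookrightarrow D^{+++}$'' does not produce such a map: dualising $D^+ \hookrightarrow D^{+++}$ gives a split epimorphism $D^{++++} \twoheadrightarrow D^{++}$, which runs in the wrong direction (away from $D$) and has nothing to do with a coproduct of copies of $D$. Second, the claim that $D^{++}$ is a \emph{pure subobject of a direct product of copies of $D$} is strictly stronger than $D^{++} \in \langle D \rangle$, and it is not generally true: the description $\mathcal{D} = {\rm CoGen}_\ast(D_0)$ holds only when $D_0$ is an \emph{elementary cogenerator}, not for an arbitrary $D \in \mathcal{D}$, and in general building $\langle D \rangle$ requires direct limits (indeed ultrapowers, via Keisler–Shelah) and not merely products and pure submodules of copies of $D$ itself. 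You are conflating ``$D^{++}$ lies in $\langle D \rangle$'' with ``$D^{++}$ is a pure subobject of $D^I$''; only the former is needed or provable.

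Once these detours are removed, what remains in your proposal is the same skeleton as the paper's argument: the module case is \cite[Corollary 4.6]{MehdiPrest} (or the standard pp-pair fact that $\phi(M)/\psi(M) = 0$ iff $\phi(M^{++})/\psi(M^{++}) = 0$); the chain complex case is handled by observing that a complex is a module over the quiver $A^\infty_\infty$ viewed as a ringoid, that any individual pp-formula involves only finitely many sorts, and that one can therefore reduce the verification of each pp-pair on $D^{++}$ to the ring case. The paper's appendix spells out exactly this multi-sorted reduction; your proposal gestures at it without making it explicit, but that is where the genuine content lives.
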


\begin{theorem}\label{theo:definable_duality_pair}
Let $\mathcal{B} \subseteq \Mod(R\op)$ be a class of right $R$-modules (or complexes of right $R$-modules). Then, the following conditions hold true.
\begin{enumerate}
\item There exists a duality pair $(\mathcal{A},\langle \mathcal{B} \rangle)$, where $\mathcal{A}$ is also a definable class given by ${\rm Prod}(\langle \mathcal{B} \rangle^+)^{\rm p}$. 

\item Moreover, if $B_0$ is an elementary cogenerator of $\langle \mathcal{B} \rangle$, then the duality pair $(\mathcal{A},\langle\mathcal{B}\rangle)$ can be written as $(\langle B^+_0 \rangle,\langle B_0 \rangle)$. 
\end{enumerate}
\end{theorem}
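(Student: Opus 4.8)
The plan is to establish both items by reducing everything to the behavior of the double character functor $(-)^{++}$ together with Lemma~\ref{lem:Prest}. First I would recall that for any module (or complex) $M$, the canonical evaluation map gives a pure embedding $M \rightarrowtail M^{++}$, and that $(-)^+$ sends pure exact sequences to split exact sequences; hence $(-)^+$ sends pure embeddings to split epimorphisms and (more to the point) $(-)^{+}$ turns the statement ``$M$ is a pure subobject of $N$'' into ``$M^+$ is a direct summand of $N^+$''. These are standard facts about the tensor-hom/character adjunction, valid both in $\Mod(R)$ via $-\otimes_R-$ and in $\Ch(R)$ via the modified tensor $\otimes$ (this is why the excerpt records the characterization of purity in $\Ch(R)$ in terms of $W \otimes \rho$). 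I would state these as a short preliminary observation, citing \cite{jrgr} for the complex case.

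For item (1): put $\mathcal{A} := {\rm Prod}(\langle \mathcal{B}\rangle^+)^{\rm p}$, i.e. pure subobjects of direct summands of products of objects of the form $B^+$ with $B\in\langle\mathcal{B}\rangle$. That $\mathcal{A}$ is definable is immediate from its construction: it is visibly closed under pure subobjects, and one checks closure under products and direct limits using that $\langle\mathcal{B}\rangle^+$ is closed under these operations up to the relevant closures (products of products are products, $(-)^+$ converts direct limits to products, etc.), so ${\rm Prod}(\langle\mathcal{B}\rangle^+)$ is product-closed and taking ${}^{\rm p}$ preserves that while adding closure under pure subobjects and direct limits. Then I verify the two duality-pair axioms. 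Axiom (2), ``$B\in\langle\mathcal{B}\rangle \iff B^+\in\mathcal{A}$'': the forward direction is clear since $B^+\in\langle\mathcal{B}\rangle^+\subseteq\mathcal{A}$; conversely if $B^+\in\mathcal{A}$ then $B^{++}$ is a pure subobject of a summand of a product of objects $C^+$ with $C\in\langle\mathcal{B}\rangle$, and since $\langle\mathcal{B}\rangle$ is definable it contains all such $C^+$ only after... here one must be slightly careful — instead I argue: $B \rightarrowtail B^{++}$ is pure, and $B^{++} = (B^+)^+$ is the character object of $B^+\in\mathcal{A}$, so it suffices to show $\mathcal{A}^+\subseteq\langle\mathcal{B}\rangle$; since $(-)^+$ sends products to products and summands to summands and pure embeddings to split epis (hence the image of a pure subobject is a quotient, indeed a summand-quotient) and since $\langle\mathcal{B}\rangle$ is closed under all of these, $\big({\rm Prod}(\langle\mathcal{B}\rangle^+)^{\rm p}\big)^+\subseteq\langle\mathcal{B}^{++}\rangle\subseteq\langle\mathcal{B}\rangle$, the last inclusion by Lemma~\ref{lem:Prest}. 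Then $B^{++}\in\langle\mathcal{B}\rangle$, and $B$ being a pure subobject of $B^{++}$, definability of $\langle\mathcal{B}\rangle$ gives $B\in\langle\mathcal{B}\rangle$. Axiom (1), ``$A\in\mathcal{A}\iff A^+\in\langle\mathcal{B}\rangle$'': the forward direction is exactly the computation just made ($\mathcal{A}^+\subseteq\langle\mathcal{B}\rangle$); conversely if $A^+\in\langle\mathcal{B}\rangle$ then $A^{++}\in\langle\mathcal{B}\rangle^+\subseteq\mathcal{A}$, and $A\rightarrowtail A^{++}$ pure forces $A\in\mathcal{A}$ since $\mathcal{A}$ is closed under pure subobjects by construction.

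For item (2): fix an elementary cogenerator $B_0$ of $\langle\mathcal{B}\rangle$, which exists by \cite[Theorem 21.7]{PrestDefinable} (or the cited references). By definition every $D\in\langle\mathcal{B}\rangle$ is a pure subobject of a product of copies of $B_0$, so $\langle B_0\rangle = \langle\mathcal{B}\rangle$, giving $(\mathcal{A},\langle\mathcal{B}\rangle) = (\mathcal{A},\langle B_0\rangle)$. It remains to identify $\mathcal{A} = {\rm Prod}(\langle\mathcal{B}\rangle^+)^{\rm p}$ with $\langle B_0^+\rangle$. One inclusion: $B_0^+\in\langle\mathcal{B}\rangle^+\subseteq\mathcal{A}$, and $\mathcal{A}$ is definable, so $\langle B_0^+\rangle\subseteq\mathcal{A}$. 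For the reverse, take $A\in\mathcal{A}$; by item (1), $A^+\in\langle\mathcal{B}\rangle = \langle B_0\rangle$, so $A^+$ is a pure subobject of a product $B_0^{\,I}$ of copies of $B_0$; applying $(-)^+$, $A^{++}$ is a direct summand of $(B_0^I)^+\simeq (B_0^+)^{(I)}$-ish — more precisely $A^{++}$ is a summand of $(B_0^{\,I})^+$, which lies in ${\rm Prod}(\{B_0^+\})\subseteq\langle B_0^+\rangle$; since $\langle B_0^+\rangle$ is closed under summands, $A^{++}\in\langle B_0^+\rangle$, and $A\rightarrowtail A^{++}$ pure plus definability of $\langle B_0^+\rangle$ yields $A\in\langle B_0^+\rangle$. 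Hence $\mathcal{A} = \langle B_0^+\rangle$ and the duality pair is $(\langle B_0^+\rangle,\langle B_0\rangle)$.

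The main obstacle I anticipate is bookkeeping around the functor $(-)^+$ on the various closure operations — in particular making sure that ``pure subobject'' is sent to ``direct summand of the character'' rather than merely to ``quotient'', and tracking that in the complex setting all of these facts ($(-)^+$ converts pure embeddings to split epis, converts direct limits to products, commutes appropriately with products up to ${\rm Prod}$) genuinely hold for Garc\'ia Rozas's modified tensor and character complex. The load-bearing input that lets the argument close is Lemma~\ref{lem:Prest} (equivalently, that definable classes are ``closed under $(-)^{++}$ in both directions''), which is precisely what converts the a priori weaker conclusion $A^{++}\in\langle\mathcal{B}\rangle$ into $A\in\langle\mathcal{B}\rangle$ and dually; without it one only gets a duality pair up to double-dual.
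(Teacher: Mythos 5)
Your overall approach matches the paper's: establish the duality-pair axioms by tracking everything through $(-)^+$ and close the loop with Lemma~\ref{lem:Prest}. Item (2) is essentially identical to the paper's argument and is fine. However, there are two problems in your treatment of item (1), one of which is a genuine gap.

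The gap is in your definability claim for $\mathcal{A}$. You assert that ``taking ${}^{\rm p}$ preserves [product-closure] while adding closure under pure subobjects \emph{and direct limits}.'' This is simply false: the class of pure subobjects of a product-closed class has no general reason to be closed under directed colimits. Closure under direct limits for $\mathcal{A}={\rm Prod}(\langle\mathcal{B}\rangle^+)^{\rm p}$ is the one nontrivial part of the definability check, and it cannot be read off the construction. The correct argument (which is what the paper does) is to first establish that $A\in\mathcal{A}$ implies $A^+\in\langle\mathcal{B}\rangle$ --- exactly the computation you carry out later when checking the duality axioms --- and then argue: given a directed system $\{A_i\}$ in $\mathcal{A}$, the canonical pure epimorphism $\bigoplus_i A_i\twoheadrightarrow\varinjlim_i A_i$ shows $(\varinjlim_i A_i)^+$ is a direct summand of $(\bigoplus_i A_i)^+\simeq\prod_i A_i^+$, which lies in $\langle\mathcal{B}\rangle$ because each $A_i^+\in\langle\mathcal{B}\rangle$ and $\langle\mathcal{B}\rangle$ is definable; then $\varinjlim_i A_i$ is a pure submodule of $(\varinjlim_i A_i)^{++}\in\langle\mathcal{B}\rangle^+\subseteq{\rm Prod}(\langle\mathcal{B}\rangle^+)$, so $\varinjlim_i A_i\in\mathcal{A}$. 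Your proof already contains the needed ingredient, but it appears \emph{after} the definability claim instead of being used to justify it; as written the logical dependency is backwards and the stated justification is wrong.

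The second, smaller issue: in deriving $\mathcal{A}^+\subseteq\langle\mathcal{B}\rangle$ you invoke ``$(-)^+$ sends products to products.'' It does not --- $(\prod_i M_i)^+$ is not $\prod_i M_i^+$. What saves the argument is that the particular products that arise are products of character objects, so one rewrites $\prod_i B_i^+\simeq(\bigoplus_i B_i)^+$ first; then $(\prod_i B_i^+)^+\simeq(\bigoplus_i B_i)^{++}$, and Lemma~\ref{lem:Prest} applied to $\bigoplus_i B_i\in\langle\mathcal{B}\rangle$ lands the double dual back in $\langle\mathcal{B}\rangle$. This is the isomorphism that makes the bookkeeping close, and it needs to be stated (the paper does so explicitly). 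With these two repairs --- reorder so the character characterization of $\mathcal{A}$ precedes (or at least supports) the direct-limit check, and replace the ``$(-)^+$ preserves products'' step with the $(\bigoplus)^+ \simeq \prod(-)^+$ identity --- your proof becomes a correct proof along the paper's own lines.
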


\begin{proof}
Let us focus only on the case of modules. The proof for complexes follows in the same way. We split the proof into three parts:
\begin{enumerate}
\item[(i)] $\mathcal{A}$ is definable: First note that it is clear that $\mathcal{A}$ is closed under pure submodules. Now consider a family of modules $\{ A_i \in \mathcal{A} \mbox{ : } i \in I \}$.

Let us show that $\prod_{i \in I} A_i \in \mathcal{A}$. For each $i \in I$, we have a pure embedding $\rho_i \colon A_i \hookrightarrow X_i$ where $X_i \in {\rm Prod}(\langle \mathcal{B} \rangle^+)$. We may assume that $X_i \simeq \prod_{j \in J_i} B^+_j$ with each $B_j \in \langle \mathcal{B} \rangle$. Since the product of pure embeddings is again pure, we have that $\prod_{i \in I} A_i$ is a pure submodule of $\prod_{i \in I} X_i$, which in turn is a direct product of modules in $\langle \mathcal{B} \rangle^+$. It follows that $\prod_{i \in I} A_i \in \mathcal{A}$. 

Finally, suppose that $I$ is a directed set. We check $\varinjlim_{i \in I} A_i \in \mathcal{A}$. Consider the canonical pure epimorphism 
\[
\bigoplus_{i \in I} A_i \twoheadrightarrow \varinjlim_{i \in I} A_i.
\] 
Then, we have that $(\varinjlim_{i \in I} A_i)^+$ is a direct summand of $(\bigoplus_{i \in I} A_i)^+ \simeq \prod_{i \in I} A^+_i$. For each $i \in I$, we have isomorphisms 
\[
X_i \simeq \prod_{j \in J_i} B^+_j \simeq (\bigoplus_{i \in J_i} B_j)^+.
\] 
It follows that $\bigoplus_{i \in J_i} B_j \in \langle \mathcal{B} \rangle$, and so $X_i \in \langle \mathcal{B} \rangle^+$. Then, $X^+_i \in \langle \mathcal{B} \rangle^{++}$, and since $\langle \mathcal{B} \rangle$ is definable, we have by Lemma~\ref{lem:Prest} that $X^+_i \in \langle \mathcal{B} \rangle$. This in turn implies that $A^+_i \in \langle \mathcal{B} \rangle$ since $A^+_i$ is a direct summand of $X^+_i$, and then $(\bigoplus_{i \in I} A_i)^+ \simeq \prod_{i \in I} A^+_i \in \langle \mathcal{B} \rangle$. Hence, $(\varinjlim_{i \in I} A_i)^+ \in \langle \mathcal{B} \rangle$, and so we can deduce that $\varinjlim_{i \in I} A_i \in \mathcal{A}$ since $\varinjlim_{i \in I} A_i$ is a pure submodule of $(\varinjlim_{i \in I} A_i)^{++} \in \langle \mathcal{B} \rangle^+$. 

\item[(ii)] $A \in \mathcal{A}$ if, and only if, $A^+ \in \langle \mathcal{B} \rangle$: The ``if'' part is clear. Now let us suppose that $A \in \mathcal{A}$. Then, $A$ is a pure submodule of a module $X \in {\rm Prod}(\langle \mathcal{B} \rangle^+)$. We may assume that $X \simeq \prod_{i \in I} B^+_i$ with $B_i \in \mathcal{B}$ for every $i \in I$. Thus, we have a pure embedding $A \hookrightarrow \prod_{i \in I} B^+_i$, and so $A^+$ is a direct summand of 
\[
X^+ \simeq (\prod_{i \in I} B^+_i)^+ \simeq (\bigoplus_{i \in I} B_i)^{++},
\] 
where $\bigoplus_{i \in I} B_i \in \langle \mathcal{B} \rangle$. Using Lemma~\ref{lem:Prest} again, we have that $X^+ \in \langle \mathcal{B} \rangle$, and hence $A^+ \in \langle \mathcal{B} \rangle$.  

\item[(iii)] The equivalence $Y \in \langle \mathcal{B} \rangle$ if, and only if, $Y^+ \in \mathcal{A}$ is similar to (2). 
\end{enumerate} 

So far we have proved that there is a duality pair $(\mathcal{A},\langle \mathcal{B} \rangle)$ with $\mathcal{A} = {\rm Prod}(\langle \mathcal{B} \rangle^+)^{\rm p}$. 

Now for the second part, note that the equality $\langle \mathcal{B} \rangle = \langle B_0 \rangle$ is straightforward. So let us first show the inclusion $\mathcal{A} \subseteq \langle B^+_0 \rangle$. It suffices to show that $\langle \mathcal{B} \rangle^+ \subseteq \langle B^+_0 \rangle$. Suppose we are given $N \in \langle \mathcal{B} \rangle$. Since $\langle \mathcal{B} \rangle = {\rm CoGen}_\ast(B_0)$, there exists a set $I$ such that $N$ is a pure submodule of the product $B^I_0$ of copies of $B_0$. On the other hand, there is a pure monomorphism $B_0 \hookrightarrow B^{++}_0$. Since any direct product of pure monomorphisms is pure, we then have a pure monomorphism $B^I_0 \hookrightarrow (B^{++}_0)^I$. It follows that $N$ is a pure submodule of $(B^{++}_0)^I$. Note also that $(B^{++}_0)^I \simeq ((B^+_0)^{(I)})^+$. Thus, we have a split (and so pure) epimorphism $((B^+_0)^{(I)})^{++} \twoheadrightarrow N^+$ where $(B^+_0)^{(I)} \in \langle B^+_0 \rangle$. Finally, by Lemma~\ref{lem:Prest} we have that $((B^+_0)^{(I)})^{++} \in \langle B^+_0 \rangle$, which in turn implies that $N^+ \in \langle B^+_0 \rangle$. Hence, the containment $\mathcal{A} \subseteq \langle B^+_0 \rangle$ follows. 

For the converse containment $\langle B^+_0 \rangle \subseteq \mathcal{A}$, it suffices to show that $B^+_0 \in \mathcal{A}$, but this follows by the definition of $\mathcal{A}$ as the class ${\rm Prod}(\langle \mathcal{B} \rangle^+)$. 
\end{proof}

\begin{remark} \
\begin{enumerate}
\item The module case for the first part of Theorem~\ref{theo:definable_duality_pair} can also be deduced from Mehdi and Prest's \cite[Corollary 4.6]{MehdiPrest}, although for pedagogical reasons we preferred to provide a more algebraic proof. Indeed, it suffices to note that every duality pair of definable classes of modules is a \emph{dual pair} in the sense of \cite[Section 1]{MehdiPrest}. For, one just needs to check the following conditions for any duality pair $(\mathcal{A,B})$ where $\mathcal{A} \subseteq \Mod(R)$ and $\mathcal{B} \subseteq \Mod(R\op)$ are definable:
\begin{itemize}
\item[(i)] $\mathcal{B}$ is closed under direct summands: This follows since direct summands are particular instances of pure submodules. 

\item[(ii)] $\mathcal{B}$ is closed under pure-injective envelopes: This means that the pure-injective envelope ${\rm PE}(B)$ of any module $B \in \mathcal{B}$ is also in $\mathcal{B}$.

Let $B \in \mathcal{B}$ and $B \hookrightarrow {\rm PE}(B)$ be its pure-injective envelope. Consider also the pure embedding $B \hookrightarrow B^{++}$. Since $B \hookrightarrow {\rm PE}(B)$ is a pure-injective envelope, $B^{++}$ is pure-injective and $B \hookrightarrow B^{++}$ is a pure embedding, one can see that there exists a split monomorphism ${\rm PE}(B) \hookrightarrow B^{++}$ such that the following triangle commutes:
\[
\begin{tikzpicture}[description/.style={fill=white,inner sep=2pt}] 
\matrix (m) [matrix of math nodes, row sep=2.5em, column sep=2.5em, text height=1.25ex, text depth=0.25ex] 
{ 
B & {\rm PE}(B) \\
{} & B^{++} \\
}; 
\path[right hook->] 
(m-1-1) edge (m-1-2) edge (m-2-2)
;
\path[dotted,right hook->]
(m-1-2) edge node[right] {\footnotesize$\exists$} (m-2-2)
;
\end{tikzpicture} 
\]
Then, ${\rm PE}(B)$ is a direct summand of $B^{++}$. On the other hand, $B^{++} \in \mathcal{B}$ since $\mathcal{B}$ is a definable class of modules (see \cite[Corollary 4.6]{MehdiPrest}). Hence, ${\rm PE}(B) \in \mathcal{B}$. 

\item[(iii)] ${\rm Prod}(\mathcal{A}^+) = {\rm PInj}(\mathcal{B})$: First, let us check the containment ${\rm Prod}(\mathcal{A}^+) \subseteq {\rm PInj}(\mathcal{B})$. Let $M \in {\rm Prod}(\mathcal{A}^+)$, that is, $M \simeq \prod_{i \in I} A^+_i$ with $A_i \in \mathcal{A}$ for every $i \in I$. Note that $\prod_{i \in I} A^+_i \simeq (\bigoplus_{i \in I} A_i)^+$ where $\bigoplus_{i \in I} A_i \in \mathcal{A}$ since $\mathcal{A}$ is definable. Then, $M \simeq (\bigoplus_{i \in I} A_i)^+ \in \mathcal{B}$, and $M$ is pure-injective since every character module is pure-injective by \cite[Proposition 5.3.7]{EJ}. Now for the containment ${\rm Prod}(\mathcal{A}^+) \supseteq {\rm PInj}(\mathcal{B})$, let $B$ be a pure-injective module in $\mathcal{B}$. Then, $B$ is a direct summand of $B^{++} = (B^+)^+$ where $B^+ \in \mathcal{A}$, and hence $B \in {\rm Prod}(\mathcal{A}^+)$.
\end{itemize}

\item The results in \cite[Section 4]{MehdiPrest} can be proved in the context of chain complexes using the techniques from the proof of Theorem~\ref{theo:definable_duality_pair} and remark (1) above, along with Lemma~\ref{lem:Prest}.
\end{enumerate}
\end{remark}

%%%%%%%%%%%%%%%%%%%%%%%%%%%%%%%%%%%%%
%%%%%%%%%%%%%%%%%%%%%%%%%%%%%%%%%%%%%
%%%%%%%%%%%%%%%%%%%%%%%%%%%%%%%%%%%%%
%%%%%%%%%%%%%%%%%%%%%%%%%%%%%%%%%%%%%

\section{\textbf{Relative Gorenstein flat modules and chain complexes}}\label{sec:modules}

We begin this section presenting the notion of \emph{Gorenstein flat objects} relative to a class $\mathcal{B} \subseteq \Mod(R\op)$ of modules or a class $\mathscr{B} \subseteq \Ch(R\op)$ of complexes, and show several properties associated to them. Gorenstein flat modules and complexes will be particular instances, and so, we shall be interested in finding some conditions for $\mathcal{B}$ and $\mathscr{B}$ under which these new relative Gorenstein flat objects are closed under extensions. Moreover, we shall show that it is possible to construct hereditary complete cotorsion pairs from them, as well as left and right approximations. Several of the results presented in this section will be stated and proved only in the setting of $R$-modules, and their chain complex counterparts follow in a very similar way, and so their proofs will be omitted, but there will be some cases where it will be necessary to distinguish between modules and complexes, and where the arguments presented require a careful treatment.  

From now on, given a right $R$-module $N \in \Mod(R\op)$, let us say that a complex $X \in \Ch(R)$ is \emph{$(N \otimes_R -)$-acyclic} if $N \otimes_R X$ is an exact complex of abelian groups, with 
\[
(N \otimes_R X)_m := N \otimes_R X_m
\] 
and differential maps given by 
\[
N \otimes_R \partial^X_m \colon N \otimes_R X_m \to N \otimes_R X_{m-1}
\] 
for every $m \in \mathbb{Z}$. More generally, given a class $\mathcal{N} \subseteq \Mod(R\op)$ of right $R$-modules, we say that $X$ is \emph{$(\mathcal{N} \otimes_R -)$-acyclic} if it is $N$-acyclic for every $N \in \mathcal{N}$. Similarly, given a class $\mathscr{Y} \subseteq \Ch(R\op)$ of complexes of right $R$-modules, $(\mathscr{Y} \otimes -)$-acyclic and $(\mathscr{Y} \otimes^. -)$-acyclic complexes are defined in the same way as their module counterpart, in terms of the tensors $- \otimes -$ and $- \otimes^. -$. 

In what follows, recall that a complex $F \in \Ch(R)$ is \emph{flat} if the induced functor 
\[
- \otimes F \colon \Ch(R\op) \longrightarrow \Ch(\mathbb{Z})
\]
is exact, or equivalently, if $F$ is exact and each cycle $Z_m(F)$ is a flat module (see Garcia Rozas' \cite[Theorem 4.1.3]{jrgr}). Flatness with respect to $\otimes^.$ was studied by the third author in \cite[Proposition 4.5.2]{PerezBook}. We shall say that a complex $L \in (\Ch(R),\otimes^.)$ is \emph{$\otimes^.$-flat} if the functor $- \otimes^. L$ is exact.

Having these notions of acyclicity  and flatness at hand, we present the following definition.

\begin{definition}\label{def:relGflat}
Let $\mathcal{B} \subseteq \Mod(R\op)$ (resp., $\mathscr{B} \subseteq \Ch(R\op)$) be a class of right $R$-modules (resp., a class of complexes of right $R$-modules). We say that:
\begin{enumerate}
\item A module $M \in \Mod(R)$ is \emph{Gorenstein $\mathcal{B}$-flat} if $M = Z_0(F)$ for some $(\mathcal{B} \otimes_R -)$-acyclic and exact complex $F$ of flat modules. 

\item A complex $X \in \Ch(R)$ is \emph{Gorenstein $\bm{\mathscr{B}}$-flat} if $X = Z_0(F_\bullet)$ for some $(\mathscr{B} \otimes -)$-acyclic and exact complex $F_\bullet  \in \Ch(\Ch(R))$ of flat complexes in $\Ch(R)$. 

\item A complex $X \in \Ch(R)$ is \emph{Gorenstein $\mathscr{B}$-flat under $\bm{\otimes^.}$} if $X = Z_0(F_\bullet)$ for some $(\mathscr{B} \otimes^. -)$-acyclic and exact complex $F_\bullet  \in \Ch(\Ch(R))$ of $\otimes^.$-flat complexes in $\Ch(R)$.
\end{enumerate}
\end{definition}

We shall denote by $\mathcal{GF}_{\mathcal{B}}(R)$, $\mathscr{GF}_{\mathscr{B}}(R)$ and $\mathscr{GF}^{\otimes^.}_{\mathscr{B}}(R)$ the classes of Gorenstein $\mathcal{B}$-flat modules in $\Mod(R)$, Gorenstein $\mathscr{B}$-flat complexes and Gorenstein $\mathscr{B}$-flat complexes under $\otimes^.$ in $\Ch(R)$, respectively. We shall use the term ``\emph{relative Gorenstein flat objects}'' to comprise these three families of modules and complexes.

\begin{example}\label{ex:GFB_cases} \
\begin{enumerate}
\item Gorenstein flat modules are obtained by setting $\mathcal{B} = \mathcal{I}(R\op)$ in the previous definition. In this case, we use the notation $\mathcal{GF}(R)$ for the class $\mathcal{GF}_{\mathcal{I}(R\op)}(R)$.

\item Similarly, if we set $\mathscr{B}$ as the class $\mathscr{I}(R\op)$ of injective complexes of right $R$-modules in the previous definition, then $\mathscr{GF}_{\mathscr{B}}(R)$ coincides with the class $\mathscr{GF}(R)$ of Gorenstein flat complexes (see \cite[Definition 3.1]{gang:12:gorflatGF}, for instance).

\item Recall that a module $M \in \Mod(R)$ is \emph{of type $\text{FP}_\infty$} if there exists an exact sequence
\[
\cdots \to P_1 \to P_0 \to M \to 0
\]
with $P_k$ finitely generated and projective for every $k \geq 0$. Let us denote the class of modules of type $\text{FP}_\infty$ by $\mathcal{FP}_\infty(R)$. 

Setting $\mathcal{B} = \mathcal{AC}(R\op) := (\mathcal{FP}_\infty(R\op))^\perp$ as the class of \emph{absolutely clean} right $R$-modules in the previous definition yields the class $\mathcal{GF}_{\rm AC}(R)$ of \emph{Gorenstein AC-flat modules}. These relative Gorenstein flat modules were defined and studied by D. Bravo and the first and second named authors in \cite{BEI17}. 

Some of the properties valid for Gorenstein flat modules carry over to Gorenstein AC-flat modules. For instance, they form a precovering class over any ring $R$. Moreover, every Gorenstein AC-flat module is a direct summand of a strongly Gorenstein AC-flat module. The converse is also true provided that $R$ is a ring over which $\mathcal{GF}_{\rm AC}(R)$ is closed under extensions. The latter also implies that $\mathcal{GF}_{\rm AC}(R)$ is a covering class (See \cite{BEI17} for details). 

One consequence of the results in the present paper is that Gorenstein AC-flat modules are always closed under extensions (see Example \ref{ex:Gorenstein_AC} (2)), and so the latter two properties hold for any ring $R$.

\item Complexes of type $\text{FP}_\infty$ and absolutely clean complexes are defined as their module analogs, and are studied in detail in \cite{BravoGillespie} by Bravo and Gillespie. If we let $\mathscr{AC}(R\op)$ denote the class of absolutely clean complexes in $\Ch(R\op)$, we shall write the class $\mathscr{GF}_{\mathscr{AC}(R\op)}(R)$ of Gorenstein $\mathscr{AC}(R\op)$-flat complexes in $\Ch(R)$ as $\mathscr{GF}_{\rm AC}(R)$. Complexes in $\mathscr{GF}_{\rm AC}(R)$ shall be referred to as \emph{Gorenstein AC-flat complexes}. Such complexes will be closed under extensions (since $\mathscr{AC}(R\op)$ is a definable class due to \cite[Proposition 2.7]{BravoGillespie}) and form a covering class that it is also the left half of a hereditary complete cotorsion pair. (This will be explained in more detail in Theorem \ref{theo:equivalences_GF}, Proposition \ref{prop:GFB_properties} and Corollary \ref{complete}).
\end{enumerate}
\end{example}

\begin{proposition}\label{prop:relative_GF_is_GF}
If $\mathcal{B} \subseteq \Mod(R\op)$ is a class of right $R$-modules containing the class $\mathcal{I}(R\op)$ of injectives, then any Gorenstein $\mathcal{B}$-flat module is, in particular, Gorenstein flat.\footnote{The containments $\mathscr{GF}_{\mathscr{B}}(R) \subseteq \mathscr{GF}(R)$ and $\mathscr{GF}^{\otimes^.}_{\mathscr{B}}(R) \subseteq \mathscr{GF}(R)$ also hold for the case where $\mathscr{B} \subseteq \Ch(R\op)$ is a class containing the class $\mathscr{I}(R\op)$ of injective complexes.} 
\end{proposition}

\begin{proof}
The proofs concerning Gorenstein $\mathcal{B}$-flat modules and Gorenstein $\mathscr{B}$-flat complexes are immediate. However, this is not the case for Gorenstein $\mathscr{B}$-flat complexes under $\otimes^.$. Let $X \in \mathscr{GF}_{\mathscr{B}}^{\otimes^.}(R)$, that is, $X = Z_0(F_\bullet)$ for some exact complex 
\[
F_\bullet = \cdots \to F_1 \to F_0 \to F_{-1} \to \cdots
\] 
of $\otimes^.$-flat complexes such that $B \otimes^. F_\bullet$ is exact for every $B \in \mathscr{B}$. Now consider an injective right $R$-module $I$. Then, $D^0(I)$ is an injective complex, and so $D^0(I) \in \mathscr{B}$. In particular, 
\[
D^0(I) \otimes^. F_\bullet = \cdots \to D^0(I) \otimes^. F_1 \to D^0(I) \otimes^. F_0 \to D^0(I) \otimes^. F_{-1} \to \cdots
\]
is an exact complex of complexes of abelian groups. Thus, for each $m \in \mathbb{Z}$, we have an exact sequence
\begin{align*}
(D^0(I) \otimes^. F_\bullet)_m & = \cdots \to (D^0(I) \otimes^. F_1)_m \to (D^0(I) \otimes^. F_0)_m \to (D^0(I) \otimes^. F_{-1})_m \to \cdots \\
& = \cdots \to [(I \displaystyle\operatorname*{\otimes}_R F_{0,m}) \oplus (I \displaystyle\operatorname*{\otimes}_R F_{0,m-1})] \to [(I \displaystyle\operatorname*{\otimes}_R F_{-1,m}) \oplus (I \displaystyle\operatorname*{\otimes}_R F_{-1,m-1})] \to \cdots \\
& \simeq \cdots \to [I \displaystyle\operatorname*{\otimes}_R (F_{0,m} \oplus F_{0,m-1})] \to [I \displaystyle\operatorname*{\otimes}_R (F_{-1,m} \oplus F_{-1,m-1})] \to \cdots \\
& \simeq I \displaystyle\operatorname*{\otimes}_R (\cdots \to F_{1,m} \oplus F_{1,m-1} \to F_{0,m} \oplus F_{0,m-1} \to F_{-1,m} \oplus F_{-1,m-1} \to \cdots) \\
& = I \displaystyle\operatorname*{\otimes}_R (F_{\bullet,m} \oplus F_{\bullet,m}[1])
\end{align*}
where $F_{\bullet,m}[1]$ is the $1$st suspension of 
\[
F_{\bullet,m} = \cdots \to F_{1,m} \to F_{0,m} \to F_{-1,m} \to \cdots.
\] 
Note that $F_{\bullet,m} \oplus F_{\bullet,m}[1]$ is an exact and $(\mathcal{I}(R\op) \otimes_R -)$-acyclic complex of flat modules such that $Z_0(F_{\bullet,m} \oplus F_{\bullet,m}[1]) = X_m \oplus X_{m-1}$. Thus, $X_m \oplus X_{m-1}$ is a Gorenstein flat module. Since Gorenstein flat modules are closed under direct summands, we have that $X_m$ is also Gorenstein flat. Hence, we have that $X \in \Ch(\mathcal{GF}(R)) = \mathscr{GF}(R)$ by \cite[Corollary 3.12]{gang:12:gorflatGF}.
\end{proof}

\begin{remark} 
For the purposes of this paper, the main class of relative Gorenstein flat complexes which we shall work with is that of Gorenstein $\mathscr{B}$-flat complexes (under the modified tensor product). One reason behind this is that when $\mathscr{B}$ is the class of injective complexes in $\Ch(R\op)$, then we recover the original concept of Gorenstein flat complexes presented in \cite[Definition 4.4]{EnochsGarcia}, and studied subsequently in other works, like for instance \cite{jrgr,gang:12:gorflatGF}. Although this is not actually a limitation to consider $\otimes^.$ instead in Definition~\ref{def:relGflat}.

However, one problem that arises after replacing $\otimes$ by $\otimes^.$ is that the notion of flatness in $\Ch(R)$ changes. Indeed, speaking in a more general setting, if we are given a category with several monoidal structures on it, then we may have distinct notions of (\emph{geometric}) \emph{flat objects} for each structure. The category $\Ch(R)$ for instance has two well known monoidal structures given by $(\otimes,D^0(R))$ and $(\otimes^.,S^0(R))$ in the case $R$ is a commutative ring (although we do not need $R$ to be commutative in order to define $\otimes$ or $\otimes^.$). For the former structure, recall that a chain complex $F \in (\Ch(R),\otimes,D^0(R))$ is flat if, and only if, $F$ is exact and $Z_m(F)$ is a flat module for every $m \in \mathbb{Z}$. On the other hand, a complex $L \in (\Ch(R),\otimes^.,S^0(R))$ is $\otimes^.$-flat if, and only if, $L$ is a complex (not necessarily exact) of flat modules. 
\end{remark}

From now on, let $\Tor^{\Ch}_i(-,-)$ and $\Tor^._i(-,-)$ denote the derived functors of $- \otimes -$ and $- \otimes^. -$. For the former, it is important to recall the properties mentioned throughout \cite{jrgr}. For the latter, the following are easy to note:
\begin{itemize}
\item $\Tor^{\cdot}_0(-,-) = - \otimes^. -$.

\item $\Tor^{\cdot}_i(-,-)$ commutes with direct limits at each variable. 

\item $\Tor^{\cdot}_i(Y,F) = 0$ for every $i \geq 1$ and $Y \in \Ch(R\op)$ if, and only if, $F$ is $\otimes^.$-flat.

\item $(\Tor^{\cdot}_i(S^m(B),F))_n \cong \Tor^R_i(B,F_{n-m})$ for every $m \in \mathbb{Z}$ and $B \in \Mod(R\op)$. 
\end{itemize}

The following lemma provides a useful characterisation of the relative Gorenstein flat objets involving torsion functors, and can be obtained for the module case after using the arguments in Bennis' \cite[Lemma 2.4]{BennisGF}, once the class of injective modules is replaced with the class $\mathcal{B}$. These arguments can be easily adapted to the setting of chain complexes.

\begin{lemma}\label{characterisations}
The following are equivalent for any $M \in \Mod(R)$ and $\mathcal{B} \subseteq \Mod(R\op)$:
\begin{itemize}
\item[(a)] $M$ is Gorenstein $\mathcal{B}$-flat.

\item[(b)] $\Tor_i ^R(B,M) = 0$ for all $i \ge 1$ and $B \in \mathcal{B}$; and there exists an exact and $(\mathcal{B} \otimes_R -)$-acyclic sequence of modules 
\[
0 \rightarrow M \rightarrow F^0 \rightarrow F^1 \rightarrow \cdots
\] 
where each $F^i$ is flat.

\item[(c)] There exists a short exact sequence of modules 
\[
0 \rightarrow M \rightarrow F \rightarrow G \rightarrow 0
\] 
where $F$ is flat and $G$ is Gorenstein $\mathcal{B}$-flat.\footnote{Similar equivalences hold for Gorenstein $\mathscr{B}$-flat complexes (resp., for Gorenstein $\mathscr{B}$-flat complexes under $\otimes^.$), if we replace $\mathcal{B}$ by $\mathscr{B} \subseteq \Ch(R\op)$, $\Tor_i ^R(-,-)$ by $\Tor_i^{\Ch}(-,-)$ (resp., by $\Tor^._i(-,-)$) and flat modules by flat complexes (resp., by $\otimes^.$-flat complexes).}
\end{itemize}
\end{lemma}

In what follows, we shall prove several properties of relative Gorenstein flat objects. The very first one to show in our list is that the classes $\mathcal{GF}_{\mathcal{B}}(R)$ and $\mathscr{GF}_{\mathscr{B}}(R)$ are closed under extensions, provided that a couple of sufficient conditions are satisfied by $\mathcal{B}$. The \emph{absolute case} for modules, that is $\mathcal{B} = \mathcal{I}(R^{\rm op})$, was settled by \v{S}aroch and \v{S}\v{t}ov\'{\i}\v{c}ek in \cite[Theorem 3.10]{SarochStovicek}, where they show that the class $\mathcal{GF}(R)$ of Gorenstein flat modules can be written as the left orthogonal class 
\begin{align}\label{eqn1}
\mathcal{GF}(R) & = {}^\perp(\mathcal{C}(R) \cap (\mathcal{PGF}(R))^\perp)
\end{align}
for any arbitrary ring $R$. Here, $\mathcal{PGF}(R)$ denotes the class of \emph{projectively coresolved Gorenstein flat modules} \cite[Section 3]{SarochStovicek}. The closure under extensions for $\mathscr{GF}_{\mathscr{B}}(R)$ in the case where $\mathscr{B} = \mathscr{I}(R\op)$ is a direct consequence of \cite[Corollary 3.12.]{gang:12:gorflatGF}. 

There is a long path to go through before showing that $\mathcal{GF}_{\mathcal{B}}(R)$ and $\mathscr{GF}_{\mathscr{B}}(R)$ are closed under extensions. For this goal, the classes $\mathcal{B}$ and $\mathscr{B}$ will be required to satisfy a couple of conditions which are related to the notion of definable classes.

%%%%%%%%%%%%%%%%%%%%%%%%%%%%%%%%%%%%%
%%%%%%%%%%%%%%%%%%%%%%%%%%%%%%%%%%%%%

\subsection*{\textbf{Relative projectively coresolved Gorenstein flat modules and complexes}}

In order to show a relative version of \eqref{eqn1} and its chain complex counterpart, we present the following analog of projectively coresolved Gorenstein flat modules (see \cite[Section 3]{SarochStovicek}).

\begin{definition}\label{def:projGF}
Let $\mathcal{B} \subseteq \mathsf{Mod}(R\op)$ be a class of right $R$-modules. We say that a module $M$ is \emph{projectively coresolved Gorenstein $\mathcal{B}$-flat} if $M = Z_0(P)$ for some $(\mathcal{B} \otimes_R -)$-acyclic and exact complex $P$ of projective modules. \emph{Projectively coresolved Gorenstein $\mathscr{B}$-flat complexes} are defined in the same way for any class $\mathscr{B} \subseteq \Ch(R\op)$ and regarding the tensor product $- \otimes -$. 
\end{definition}

In what follows, let us denote by $\mathcal{PGF}_{\mathcal{B}}(R)$ and $\mathscr{PGF}_{\mathscr{B}}(R)$ the classes of projectively coresolved Gorenstein $\mathcal{B}$-flat modules and Gorenstein $\mathscr{B}$-flat complexes, respectively.  

The purpose of this section is to find some sufficient conditions for $\mathcal{B} \subseteq \mathsf{Mod}(R\op)$ and $\mathscr{B} \subseteq \Ch(R\op)$ so that $\mathcal{PGF}_{\mathcal{B}}(R)$ and $\mathscr{PGF}_{\mathscr{B}}(R)$ are the left halves of complete cotorsion pairs in $\Mod(R)$ and $\Ch(R)$, respectively. Namely, we shall need the conditions specified in the following definition.

\begin{definition}
We say that a subcategory $\mathcal{B}$ of a finitely accessible additive category with products is \emph{semi-definable} if it is closed under products and contains an elementary cogenerator of its definable closure.
\end{definition}

For any semi-definable class $\mathcal{B} \subseteq \mathsf{Mod}(R\op)$, we shall be able to show the equality
\begin{align}\label{eqn:GF_description}
\mathcal{GF}_{\mathcal{B}}(R) = {}^\perp(\mathcal{C}(R) \cap (\mathcal{PGF}_{\mathcal{B}}(R))^\perp).
\end{align}

The following result is the relative version of \cite[Lemma 3.1]{SarochStovicek}. Its proof is very similar to the proof of the absolute case (that is, setting $\mathcal{B} = \mathcal{I}(R\op)$), and an overview for this can be seen in \cite[Section 3]{EstradaFuIacob}.

\begin{lemma}\label{lem:Tor}
Let $\mathcal{B} \subseteq \mathsf{Mod}(R\op)$ be any class of right $R$-modules. If $N$ is a projectively coresolved Gorenstein $\mathcal{B}$-flat module, then $N$ is a direct summand of a module $M$ such that $M \simeq P / M$ with $P$ projective and $\Tor^R_i(B,M) = 0$ for every $B \in \mathcal{B}$ and $i \geq 1$.\footnote{A similar statement is also true for projectively coresolved Gorenstein $\mathscr{B}$-flat complexes and torsion functors $\Tor^{\Ch}_i(-,-)$.} 
\end{lemma}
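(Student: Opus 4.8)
The statement is the relative analog of \v{S}aroch--\v{S}\v{t}ov\'{\i}\v{c}ek's \cite[Lemma 3.1]{SarochStovicek}, so the plan is to adapt their argument with the class $\mathcal{I}(R\op)$ replaced by $\mathcal{B}$ throughout, being careful that the only place their proof invokes injectivity is in claiming $\Tor$-vanishing, which here is built into the definition of $(\mathcal{B}\otimes_R-)$-acyclicity. First I would start from the defining data: since $M\in\mathcal{PGF}_{\mathcal{B}}(R)$, there is a $(\mathcal{B}\otimes_R-)$-acyclic exact complex $P$ of projectives with $M=Z_0(P)$. The idea is to ``fold'' $P$ into a single short exact sequence of the desired shape. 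Concretely, I would produce from $P$ a projective module $P'$ together with a module $N$ sitting in a short exact sequence $0\to N\to P'\to N\to 0$ (so $N\simeq P'/N$), having $M$ as a direct summand, and with $\Tor^R_i(B,N)=0$ for all $B\in\mathcal{B}$ and $i\ge 1$.

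The key construction: split the acyclic complex $P$ at every spot into short exact sequences $0\to K_{n+1}\to P_n\to K_n\to 0$ where $K_0=M$ (using the usual syzygy notation, $K_n=Z_{n}(P)$ up to reindexing), and then take the two ``halves'' of $P$ and splice them. Following \cite[Section 3]{SarochStovicek}, one forms $N$ as a suitable infinite direct sum / telescope of the modules appearing in $P$ — roughly, $N=\bigoplus_{n}K_n$ assembled so that shifting the complex $P$ realises the self-extension $N\simeq P'/N$ with $P'=\bigoplus_n P_n$ projective. That $M=K_0$ is a direct summand of $N$ is then immediate. The $\Tor$-vanishing $\Tor^R_i(B,N)=0$ for $i\ge1$ follows because each $K_n$ has this property: from the short exact sequences and the fact that $P$ is $(\mathcal{B}\otimes_R-)$-acyclic, $B\otimes_R P$ is exact, so dimension-shifting gives $\Tor^R_i(B,K_n)\cong \Tor^R_{i+1}(B,K_{n-1})\cong\cdots$, and these stabilise to subquotients of the exact complex $B\otimes_R P$, forcing them to vanish; since $\Tor$ commutes with the direct sum in the second variable, $\Tor^R_i(B,N)=0$ as well. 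One should also double-check that $P'=\bigoplus_n P_n$ is genuinely projective (a direct sum of projectives is projective) and that the splicing map $P'\twoheadrightarrow N$ really has kernel isomorphic to $N$, which is where the bookkeeping of the indices in the telescope must be done carefully.

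The main obstacle I expect is the precise assembly of the self-extension $0\to N\to P'\to N\to 0$ — i.e.\ writing down the right direct sum (or inverse/direct limit) of the pieces of $P$ and the maps between them so that the shift on $P$ becomes an honest self-map, and verifying $M$ is a direct summand rather than merely a subquotient. This is exactly the technical heart of \cite[Lemma 3.1]{SarochStovicek}, and since the excerpt explicitly says the relative proof ``is very similar to the proof of the absolute case'' with an overview available in \cite[Section 3]{EstradaFuIacob}, I would present the construction in the same way, emphasising only the two relativised points: (i) replacing injectives by $\mathcal{B}$ changes nothing structural because acyclicity of $B\otimes_R P$ is hypothesised directly, and (ii) the $\Tor$-vanishing for the $K_n$'s (hence for $N$) comes from dimension-shifting against this acyclic complex. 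No new idea beyond \cite{SarochStovicek} is needed; the content is checking that their argument never used more about $\mathcal{I}(R\op)$ than the acyclicity condition now packaged into Definition~\ref{def:projGF}.
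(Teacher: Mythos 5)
Your proposal matches the approach the paper itself intends: the paper defers the proof to \cite[Lemma 3.1]{SarochStovicek} (with an overview in \cite[Section 3]{EstradaFuIacob}), and the construction is exactly the one you describe, namely splicing the two-sided exact complex $P$ of projectives into a single extension $0 \to N \to P' \to N \to 0$ with $N = \bigoplus_{n\in\mathbb{Z}} K_n$ and $P' = \bigoplus_{n\in\mathbb{Z}} P_n$, so that $M = K_0$ is a direct summand and $(\mathcal{B}\otimes_R -)$-acyclicity substitutes transparently for injectivity. One small repair to your dimension-shifting: from $0\to K_{n+1}\to P_n\to K_n\to 0$ the useful chain runs \emph{down} to degree one, $\Tor^R_i(B,K_n) \cong \Tor^R_{i-1}(B,K_{n+1}) \cong \cdots \cong \Tor^R_1(B,K_{n+i-1})$, and this last group is $\Ker\bigl(B\otimes_R K_{n+i}\to B\otimes_R P_{n+i-1}\bigr)$, which vanishes precisely because $B\otimes_R P$ is exact; the chain $\Tor^R_i(B,K_n)\cong \Tor^R_{i+1}(B,K_{n-1})\cong\cdots$ that you wrote climbs to ever-higher $\Tor$ groups and never terminates at a term visibly controlled by the acyclicity hypothesis.
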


We have the following relations between the classes $\mathcal{PGF}_{\mathcal{B}}(R)$, $\mathcal{F}(R)$ and the definable closure $\langle R \rangle$ of the ground ring $R$, which are similar to those appearing in \cite{SarochStovicek} for the absolute case. We only give some comments on the proof. In what follows, we shall denote by $\mathscr{F}(R)$ the class of flat complexes in $\Ch(R)$.

\begin{proposition}\label{prop:PGFB_definable_R}
Let $\mathcal{B} \subseteq \Mod(R\op)$ be a class containing the injective right $R$-modules. The following containments hold true for any ring $R$:
\begin{enumerate}
\item $\mathcal{PGF}_{\mathcal{B}}(R) \subseteq {}^\perp\langle R \rangle \subseteq {}^{\perp}\mathcal{F}(R)$.

\item $\mathcal{F}(R) \subseteq \langle R \rangle \subseteq (\mathcal{PGF}_{\mathcal{B}}(R))^\perp$.
\end{enumerate}
Similarly, the following containments hold true for any ring $R$ and any class $\mathscr{B} \subseteq \Ch(R\op)$ containing the injective complexes of right $R$-modules:
\begin{enumerate}
\setcounter{enumi}{2}
\item $\mathscr{PGF}_{\mathscr{B}}(R) \subseteq {}^\perp\langle \bigoplus_{m \in \mathbb{Z}} D^m(R) \rangle \subseteq {}^{\perp}\mathscr{F}(R)$.

\item $\mathscr{F}(R) \subseteq \langle \bigoplus_{m \in \mathbb{Z}} D^m(R) \rangle \subseteq (\mathscr{PGF}_{\mathscr{B}}(R))^\perp$.
\end{enumerate}
In particular, projectively coresolved Gorenstein $\mathcal{B}$-flat modules and projectively coresolved Gorenstein $\mathscr{B}$-flat complexes are Gorenstein projective.
\end{proposition}

\begin{proof} \
\begin{itemize}
\item \underline{Module setting}: Note that any definable class is closed under direct summands and under coproducts. So in particular, $\langle R \rangle$ contains the class of projective modules. Moreover, as every flat module is the direct limit of a directed family of projective modules, we have that the class $\mathcal{F}(R)$ of flat modules is contained in $\langle R \rangle$. Using these observations and proceeding in a similar way as in \cite[Theorem 3.4]{SarochStovicek}, we obtain the relations (1) and (2)  after applying Lemma \ref{lem:Tor} and \cite[Proposition 3.2]{SarochStovicek}. 

\item \underline{Chain complex setting}: Relations (3) and (4) follow is a similar way. For instance, in the proof of (3), one needs to take the complex $I$ defined as $\underline{\Hom}(\bigoplus_{m \in \mathbb{Z}} D^m(R), D^1(\mathbb{Q / Z}))$, and use the facts that every complex is a pure subcomplex of its double dual \cite[Part 4 of Proposition 5.1.4]{jrgr}, and that Gorenstein projective complexes are closed under direct summands \cite[Theorem 2.3]{yang:11:gorflat}. For the proof of (4), on the other hand, one just needs to notice that Lazard's Theorem also holds for chain complexes \cite[Theorem 4.1.3]{jrgr}.
\end{itemize} 
\end{proof}

The following result is the relative version of the absolute case proven in \cite[Lemma 3.7]{SarochStovicek}. 

Given a subcategory $\mathcal{S}$ of a Grothendieck category $\mathcal{G}$, recall that an object $M \in \mathcal{G}$ is a \emph{transfinite extension of $\mathcal{S}$} (or an \emph{$\mathcal{S}$-filtration}) if $M \simeq \varinjlim_{\alpha < \lambda} S_\alpha$, for some ordinal $\lambda > 0$, and such that:
\begin{itemize}
\item For every $\alpha + 1 < \lambda$, the morphism $S_\alpha \to S_{\alpha + 1}$ is a monomorphism.

\item $S_0 = 0$ and $S_{\alpha + 1} / S_\alpha \in \mathcal{S}$ for every $\alpha + 1 < \lambda$.
\end{itemize} 
In this sense, $\mathcal{S}$ is said to be \emph{closed under transfinite extensions} if every $\mathcal{S}$-filtered object of $\mathcal{G}$ belongs to $\mathcal{S}$.

\begin{theorem}\label{theo:closure_PGFB}
Let $R$ be an arbitrary ring. Then, the class $\mathcal{PGF}_{\mathcal{B}}(R)$ of projectively coresolved Gorenstein $\mathcal{B}$-flat modules is resolving and closed under transfinite extensions.\footnote{This result is also valid if we replace $\mathcal{B}$ by a class $\mathscr{B} \subseteq \Ch(R\op)$ and the class $\mathcal{PGF}_{\mathcal{B}}(R)$ by the class $\mathscr{PGF}_{\mathscr{B}}(R)$ of projectively coresolved Gorenstein $\mathscr{B}$-flat complexes.}
\end{theorem}

\begin{proof} 
Let us split the proof into two parts:
\begin{enumerate}
\item It is clear that $\mathcal{PGF}_{\mathcal{B}}(R)$ contains the class $\mathcal{P}(R)$ of projective modules. We only prove that $\mathcal{PGF}_{\mathcal{B}}(R)$ is closed under extensions, as this will imply that $\mathcal{PGF}_{\mathcal{B}}(R)$ is also closed under taking kernels of epimorphisms between modules in $\mathcal{PGF}_{\mathcal{B}}(R)$, by an argument similar to \cite[Lemma 3.7]{SarochStovicek}. Thus, consider a short exact sequence
\[
\varepsilon \colon 0 \to M_1 \xrightarrow{\alpha} M_2 \xrightarrow{\beta} M_3 \to 0
\]
with $M_1, M_3 \in \mathcal{PGF}_{\mathcal{B}}(R)$. By Definition~\ref{def:projGF}, we can consider the following short exact sequences
\begin{align*}
\varepsilon_1 \colon & 0 \to M_1 \xrightarrow{g} P_1 \to M'_1 \to 0, \\
\varepsilon_3 \colon & 0 \to M_3 \xrightarrow{h} P_3 \to M'_3 \to 0,
\end{align*}
where $P_1$ and $P_3$ are projective, and $M'_1, M'_3 \in \mathcal{PGF}_{\mathcal{B}}(R)$. Since $\Ext^1_R(M_3,P_1) = 0$ by Proposition \ref{prop:PGFB_definable_R}, we have that $\Hom_R(\varepsilon,P_1)$ is exact. It follows that there exists a morphism $m_1 \colon M_2 \to P_1$ such that $m_1 \circ \alpha = g$. By the universal property of coproducts and cokernels, we have the following commutative diagram with exact rows and columns:
\begin{equation}\label{fig1} 
\parbox{2.5in}{
\begin{tikzpicture}[description/.style={fill=white,inner sep=2pt}] 
\matrix (m) [ampersand replacement=\&, matrix of math nodes, row sep=2.5em, column sep=2.5em, text height=1.25ex, text depth=0.25ex] 
{ 
0 \& M_1 \& M_2 \& M_3 \& 0 \\
0 \& P_1 \& P_1 \oplus P_3 \& P_3 \& 0 \\
0 \& M'_1 \& M'_2 \& M'_3 \& 0 \\
}; 
\path[->] 
(m-1-1) edge (m-1-2) (m-1-2) edge node[above] {\footnotesize$\alpha$} (m-1-3) (m-1-3) edge node[above] {\footnotesize$\beta$} (m-1-4) (m-1-4) edge (m-1-5)
(m-2-1) edge (m-2-2) (m-2-2) edge (m-2-3) (m-2-3) edge (m-2-4) (m-2-4) edge (m-2-5)
(m-3-1) edge (m-3-2) (m-3-2) edge (m-3-3) (m-3-3) edge (m-3-4) (m-3-4) edge (m-3-5)
(m-1-3) edge node[above,sloped] {\footnotesize$m_1$} (m-2-2)
;
\path[>->]
(m-1-2) edge node[right] {\footnotesize$g$} (m-2-2) (m-1-3) edge node[right] {\footnotesize$m$} (m-2-3) (m-1-4) edge node[right] {\footnotesize$h$} (m-2-4)
;
\path[->>]
(m-2-2) edge (m-3-2) (m-2-3) edge (m-3-3) (m-2-4) edge (m-3-4) 
; 
\end{tikzpicture} 
}
\end{equation} 
Repeating the same argument infinitely many times, we obtain a long exact sequence 
\[
0 \to M_2 \xrightarrow{m} P_1 \oplus P_3 \to P'_1 \oplus P'_3 \to \cdots
\]
of projective modules with cycles in ${\rm Ker}(\Tor^R_i(\mathcal{B},-))$ for every $i \geq 1$. Indeed, for the latter condition note that for each $i \geq 1$ we have an exact sequence
\[
\Tor^R_i(B,M_1) \to \Tor^R_i(B,M_2) \to \Tor^R_i(B,M_3)
\]
where $\Tor^R_i(B,M_1) = 0$ and $\Tor^R_i(B,M_3) = 0$ for every $B \in \mathcal{B}$, by Lemmas~\ref{characterisations} and \ref{lem:Tor}. Hence, $\Tor^R_i(B,M_2) = 0$ for every $B \in \mathcal{B}$ and $i \geq 1$. Similarly, one can note that $\Tor^R_i(B,M'_2) = 0$ and so on. Therefore, $M_2 \in \mathcal{PGF}_{\mathcal{B}}(R)$ by Lemma~\ref{lem:Tor} again. 

\item To show that $\mathcal{PGF}_{\mathcal{B}}(R)$ is closed under transfinite extensions, suppose that we are given a module $M$ written as $M = \varinjlim_{\alpha < \lambda} M_\alpha$ for some ordinal $\lambda > 0$ such that $M_0 \in \mathcal{PGF}_{\mathcal{B}}(R)$ and $M_{\alpha + 1} / M_\alpha \in \mathcal{PGF}_{\mathcal{B}}(R)$ for every $\alpha + 1 < \lambda$. Proceeding by transfinite induction, suppose we have constructed a projective coresolution 
\[
\rho_\alpha \colon 0 \to M_\alpha \to P^0_{\alpha} \to P^1_{\alpha} \to \cdots
\]
with cycles in ${\rm Ker}(\Tor^R_i(B,-))$ for every $B \in \mathcal{B}$ and $i \geq 1$. By assumption, we also have a projective coresolution 
\[
0 \to M_{\alpha + 1} / M_\alpha \to P^0_{\alpha,\alpha+1} \to P^1_{\alpha,\alpha+1} \to \cdots
\]
satisfying the same condition on its cycles. Proceeding as in part (1) above, we have the following commutative diagram with exact rows and columns:
\begin{equation}\label{fig3} 
\parbox{4in}{
\begin{tikzpicture}[description/.style={fill=white,inner sep=2pt}] 
\matrix (m) [ampersand replacement=\&, matrix of math nodes, row sep=2.5em, column sep=2.5em, text height=1.25ex, text depth=0.25ex] 
{ 
0 \& M_\alpha \& M_{\alpha + 1} \& M_{\alpha + 1} / M_\alpha \& 0 \\
0 \& P^0_\alpha \& P^0_\alpha \oplus P^0_{\alpha,\alpha+1} \& P^0_{\alpha,\alpha+1} \& 0 \\
0 \& P^1_\alpha \& P^1_\alpha \oplus P^1_{\alpha,\alpha+1} \& P^1_{\alpha,\alpha+1} \& 0 \\
{} \& \vdots \& \vdots \& \vdots \& {} \\
}; 
\path[->] 
(m-1-1) edge (m-1-2) (m-1-2) edge (m-1-3) (m-1-3) edge (m-1-4) (m-1-4) edge (m-1-5)
(m-2-1) edge (m-2-2) (m-2-2) edge (m-2-3) (m-2-3) edge (m-2-4) (m-2-4) edge (m-2-5)
(m-3-1) edge (m-3-2) (m-3-2) edge (m-3-3) (m-3-3) edge (m-3-4) (m-3-4) edge (m-3-5)
(m-2-2) edge (m-3-2) (m-2-3) edge (m-3-3) (m-2-4) edge (m-3-4)
(m-3-2) edge (m-4-2) (m-3-3) edge (m-4-3) (m-3-4) edge (m-4-4)
; 
\path[>->]
(m-1-2) edge (m-2-2) (m-1-3) edge (m-2-3) (m-1-4) edge (m-2-4)
;
\end{tikzpicture} 
}
\end{equation} 
where the central column is a projective coresolution with cycles (including $M_{\alpha + 1}$ itself) in ${\rm Ker}(\Tor^R_i(B,-))$ for every $B \in \mathcal{B}$ and $i \geq 1$. Then, we can set 
\begin{align*}
P^i_{\alpha + 1} & := P^i_{\alpha} \oplus P^i_{\alpha, \alpha + 1} & \text{for every $\alpha + 1 < \lambda$}, \\
P^i_\beta & := \varinjlim_{\alpha < \beta} P^i_\alpha & \text{for every limit ordinal $\beta \leq \lambda$}. 
\end{align*} 
Now fix $N \in \Mod(R)$ and let $\alpha + 1 < \lambda$. We have that 
\[
\Ext^1_R(P^i_0,N) = 0 \text{ \ and \ } \Ext^1_R(P^i_{\alpha + 1} / P^i_\alpha,N) = 0.
\] 
By Eklof's Lemma (see for instance \cite[Theorem 7.3.4]{EJ}), we get $\Ext^1_R(P^i_\lambda,N) = 0$ for $P^i_\lambda := \varinjlim_{\alpha < \lambda} P^i_\alpha$. Since $N$ is arbitrary, we have that each $P^i_\lambda$ is projective. Then, taking the direct limit of all the $\rho_\alpha$ yields a projective coresolution of $M$, say $\rho$. It is only left to show that the cycles of $\rho$ belong to ${\rm Ker}(\Tor^R_i(B,-))$ for every $B \in \mathcal{B}$ and $i \geq 1$. For each $\alpha + 1 < \lambda$, we know that $M_\alpha \in {\rm Ker}(\Tor^R_i(B,-))$ for every $B \in \mathcal{B}$ and $i \geq 1$. Furthermore, for every limit ordinal $\beta \leq \lambda$, we have $M_\beta \in {\rm Ker}(\Tor^R_i(B,-))$ since $\Tor^R_i(B,-)$ preserves direct limits. Therefore, the previous implies that $\mathcal{PGF}_{\mathcal{B}}(R)$ is closed under transfinite extensions.
\end{enumerate}

The corresponding result for $\mathscr{PGF}_{\mathscr{B}}(R)$ follows similarly using the chain complex version of Lemma \ref{lem:Tor}, along with Eklof's Lemma (which is also valid in $\Ch(R)$).
\end{proof}

We now focus on proving that $\mathcal{PGF}_{\mathcal{B}}(R)$ is the left half of a hereditary complete cotorsion pair in $\mathsf{Mod}(R)$. This is a key result for showing that $\mathcal{GF}_{\mathcal{B}}(R)$ is closed under extensions.

\begin{lemma}\label{lem:characterisation_pcoresolved}
Let $\mathcal{B} \subseteq \mathsf{Mod}(R^{\rm op})$ be a class closed under products, and $P$ be an exact complex of projective modules. Consider the associated duality pair $(\mathcal{A},\langle \mathcal{B} \rangle)$ from Theorem \ref{theo:definable_duality_pair}. The following conditions are equivalent:
\begin{itemize}
\item[(a)] $B \otimes_R P$ is exact for every $B \in \mathcal{B}$.

\item[(b)] $N \otimes_R P$ is exact for every $N \in \langle \mathcal{B} \rangle$.

\item[(c)] $\mathsf{Hom}_R(P,A)$ is exact for every $A \in \mathcal{A}$.
\end{itemize}
Moreover, in the case where $B_0$ is an elementary cogenerator of $\langle \mathcal{B} \rangle$, then the previous are also equivalent to:
\begin{itemize}
\item[(d)] $N \otimes_R P$ is exact for every $N \in \langle B_0 \rangle$.

\item[(e)] $\mathsf{Hom}_R(P,A)$ is exact for every $A \in \langle B^+_0 \rangle$. 
\end{itemize} 

In the setting of chain complexes, let $\mathscr{B} \subseteq \Ch(R\op)$ be a semi-definable class of complexes of right $R$-modules, and consider the associated duality pair $(\mathscr{A},\langle \mathscr{B} \rangle)$ from Theorem \ref{theo:definable_duality_pair}, where $\langle \mathscr{B} \rangle$ has an elementary cogenerator $B_0$. Then, the following conditions are equivalent for every exact complex $P_\bullet$ of projective complexes in $\Ch(R)$:
\begin{itemize}
\item[(i)] $B \otimes P_\bullet$ is exact for every $B \in \mathscr{B}$.

\item[(ii)] $Y \otimes P_\bullet$ is exact for every $Y \in \langle \mathscr{B} \rangle$. 

\item[(iii)] $Y \otimes P_\bullet$ is exact for every $Y \in {\rm CoGen}_\ast(B_0)$. 

\item[(iv)] $\Hom_{\Ch}(P_\bullet,X)$ is exact for every $X \in \langle B^+_0 \rangle$. 
\end{itemize}
\end{lemma}

\begin{proof} 
We only prove the equivalence between (a), (b) and (c). The corresponding assertion regarding (d) and (e) follows by Theorem \ref{theo:definable_duality_pair}. Since $(\mathcal{A},\langle \mathcal{B} \rangle)$ is a duality pair and $\langle \mathcal{B} \rangle$ is closed under pure epimorphic images, we have by \cite[Theorem A.6]{BGH} that (b) and (c) are equivalent. On the other hand, (b) $\Rightarrow$ (a) is trivial. So it suffices to show (a) $\Rightarrow$ (b).

Suppose the complex $P$ satisfies $B \otimes_R P$ is exact for every $B \in \mathcal{B}$. Let $N \in \langle \mathcal{B} \rangle$. Then $N$ can be regarded as a pure submodule (or a pure epimorphic image) of $N'$, where $N'$ is a direct limit or a direct product of elements in $\mathcal{B}$ (see for instance \cite{PrestPurity}). For the latter case, we have that $N' \in \mathcal{B}$ since $\mathcal{B}$ is closed under direct products, and so $N' \otimes_R P$ is an exact complex. On the other hand, in the former case let us write $N' = \varinjlim_I B_i$ for some directed set $I$, with $B_i \in \mathcal{B}$. Since tensor products preserve direct limits, and any direct limit of exact complexes is exact, we have that 
\[
N' \otimes_R P \cong \varinjlim_{I} (B_i \otimes_R P)
\] 
is an exact complex. In any case, we have that $N \otimes_R P$ is a subcomplex of the exact complex $N' \otimes_R P$. In what remains, let us show that $N \otimes_R P$ has to be exact. Since we have a pure embedding $N \hookrightarrow N'$, it follows that $N^+$ is a direct summand of $(N')^+$. This in turn implies that $\Hom_R(P,N^+)$ is a direct summand of $\Hom_R(P,(N')^+)$. Note also that the complex $\Hom_R(P,(N')^+)$ is exact since 
\[
\Hom_R(P,(N')^+) \cong \Hom_{\mathbb{Z}}(N' \otimes_R P, \mathbb{Q / Z}),
\] 
$N' \otimes_R P$ is exact and $\mathbb{Q / Z}$ is an injective $\mathbb{Z}$-module. Hence, the complex $\Hom_R(P,N^+)$ is exact since exact complexes are closed under direct summands. Using again the isomorphism 
\[
\Hom_R(P,N^+) \cong \Hom_{\mathbb{Z}}(N \otimes_R P, \mathbb{Q / Z})
\] 
and the fact that $\mathbb{Q / Z}$ is an injective cogenerator in $\mathsf{Ab}$, we finally have that the complex $N \otimes_R P$ is exact. 

The statement concerning chain complexes follows by a similar argument and by using the chain complex version of \cite[Theorem A.6]{BGH} proved by Gillespie in \cite[Theorem 5.9]{GillespieDingComplexes}.
\end{proof}

\begin{remark}
Let $\mathcal{B} = \mathcal{I}(R^{\rm op})$. In this case, we know $\mathcal{I}(R^{\rm op})$ is closed under products. Moreover, we can show that $\langle \mathcal{I}(R^{\rm op}) \rangle = \langle R^+ \rangle$, and thus Lemma~\ref{lem:characterisation_pcoresolved} coincides with \v{S}aroch and \v{S}\v{t}ov\'{\i}\v{c}ek's \cite[Corollary 3.5]{SarochStovicek}.

In order to show the equality $\langle \mathcal{I}(R^{\rm op}) \rangle = \langle R^+ \rangle$, note first that since $R^+$ is injective, we have $\langle R^+ \rangle \subseteq \langle \mathcal{I}(R^{\rm op}) \rangle$. For the remaining inclusion, it suffices to show $\mathcal{I}(R^{\rm op}) \subseteq \langle R^+ \rangle$. Let $E$ be an injective right $R$-module. Consider an epimorphism $R^{(I)} \twoheadrightarrow E^+$ for some index set $I$. Since the functor $\mathsf{Hom}_{\mathbb{Z}}(-,\mathbb{Q / Z})$ is exact, we have a monomorphism 
\[
E^{++} \rightarrowtail (R^{(I)})^+ = (R^+)^I.
\] 
On the other hand, we have a (pure) monomorphism $E \hookrightarrow E^{++}$. It follows that $E$ is a pure submodule of $(R^+)^I$, where $(R^+)^I \in \langle R^+ \rangle$. Hence, $E \in \langle R^+ \rangle$. 

Let $(\mathcal{A},\langle \mathcal{I}(R^{\rm op}) \rangle)$ be the corresponding duality pair from Theorem \ref{theo:definable_duality_pair}. We show that $\mathcal{A} = \langle \mathcal{P}(R) \rangle = \langle R \rangle$. By Theorem \ref{theo:definable_duality_pair}, we have that $\mathcal{A} = \langle R^{++} \rangle$. Moreover, $R$ is a pure submodule of $R^{++}$, so the inclusion $\mathcal{A} \supseteq \langle R \rangle$ follows. On the other hand, the containment $\mathcal{A} \subseteq \langle R \rangle$ follows since $R^{++} \in \langle R \rangle$ by Lemma~\ref{lem:Prest}. Note also that $\langle \mathcal{P}(R) \rangle \supseteq \langle R \rangle$ and $\mathcal{P}(R) \subseteq \langle R \rangle$ are clear, since every projective module is a direct summand (and so a pure submodule) of a free module. 

Therefore, we have a duality pair of the form
\[
\mathfrak{P} = (\langle \mathcal{P}(R) \rangle, \langle \mathcal{I}(R^{\rm op}) \rangle) = (\langle R \rangle,\langle R^+ \rangle).
\]
\end{remark}

The importance of semi-definable classes has to do with the fact (to be proved) that it is possible to construct complete cotorsion pairs from (projectively coresolved) Gorenstein flat modules which are relative with respect to such classes.  In this sense, we have the following relativisation of \cite[Theorem 3.8]{SarochStovicek}.

\begin{theorem}\label{theo:cotorsion_pair_PGFB}
Let $\mathcal{B} \subseteq \mathsf{Mod}(R^{\rm op})$ be a semi-definable class. Then, 
\[
(\mathcal{PGF}_{\mathcal{B}}(R),(\mathcal{PGF}_{\mathcal{B}}(R))^\perp)
\] 
is a hereditary complete cotorsion pair in $\Mod(R)$ cogenerated by a set.\footnote{A similar result holds in the category $\Ch(R)$ of chain complexes after replacing $\mathcal{PGF}_{\mathcal{B}}(R)$ by $\mathscr{PGF}_{\mathscr{B}}(R)$, where $\mathscr{B} \subseteq \Ch(R^{\rm op})$ is semi-definable.} 
\end{theorem}

\begin{proof} Although the proofs for modules and chain complexes are similar, it will be important to point out some remarks concerning the latter setting. 
\begin{itemize}
\item \underline{Module setting}: First, let us note by G\"obel and Trlifaj's \cite[Definition 4.1.9 and Lemma 4.1.10]{GT} that it is possible to find a regular cardinal $\nu$ such that $R$ is a $\nu$-Noetherian ring, that is, each right ideal of $R$ is $\leq \nu$-generated. Thus, let us consider the set $\mathcal{S}_{\mathcal{B}}$ of representatives of $\nu$-presented modules in $\mathcal{PGF}_{\mathcal{B}}(R)$. Let $(\mathcal{F}_{\mathcal{B}},\mathcal{G}_{\mathcal{B}})$ denote the cotorsion pair cogenerated by $\mathcal{S}_{\mathcal{B}}$, that is, $\mathcal{G}_{\mathcal{B}} = \mathcal{S}_{\mathcal{B}}^\perp$. In what follows, we show that this cotorsion pair coincides with $(\mathcal{PGF}_{\mathcal{B}}(R),(\mathcal{PGF}_{\mathcal{B}}(R))^\perp)$. For, it suffices to show the equality $\mathcal{PGF}_{\mathcal{B}}(R) = \mathcal{F}_{\mathcal{B}}$. Notice that $R \in \mathcal{S}_{\mathcal{B}}$, and so by \cite[Corollary 3.2.4]{GT} the class $\mathcal{F}_{\mathcal{B}}$ consists of all direct summands of $\mathcal{S}_{\mathcal{B}}$-filtered modules. 
\begin{itemize}
\item $\mathcal{PGF}_{\mathcal{B}}(R) \subseteq \mathcal{F}_{\mathcal{B}}$: Let $N \in \mathcal{PGF}_{\mathcal{B}}(R)$. By Lemma~\ref{lem:Tor}, $N$ is a direct summand of a module $M \in \mathsf{Mod}(R)$ satisfying $M \simeq P / M$ with $P$ projective, and such that $\mathsf{Tor}^R_i(B,M) = 0$ for every $B \in \mathcal{B}$ and $i > 0$. We show that $M$ is $\mathcal{S}_{\mathcal{B}}$-filtered. We are given a short exact sequence
\[
\varepsilon \colon 0 \to M \xrightarrow{f} P \to M \to 0.
\]
Let $B_0 \in \mathcal{B}$ be an elementary cogenerator of $\left< \mathcal{B} \right>$. From the conditions we are assuming for $M$ and $\mathcal{B}$, we can note that $B^{\theta}_0 \otimes_R f$ is monic for every cardinal $\theta$ since $B^{\theta}_0 \in \mathcal{B}$. Then, setting $I := B_0$ in \cite[Proposition 3.2]{SarochStovicek}, we have that $M \in {}^\perp\langle B^+_0 \rangle$. Now setting $\mathcal{D} := \mathsf{Mod}(R)$ in \cite[Proposition 3.6]{SarochStovicek}, we have that there exists a filtration 
\[
\mathfrak{F}_{\mathcal{B}} = \{ \varepsilon_\alpha \colon 0 \to M_\alpha \xrightarrow{f_\alpha} P_\alpha \to M_\alpha \to 0 \mbox{ : } \alpha \leq \sigma \}
\]
of $\varepsilon$ such that $P_{\alpha + 1} / P_\alpha$ is projective and $M_{\alpha + 1} / M_\alpha$ is $\nu$-presented and belongs to ${}^\perp\langle B^+_0 \rangle$. Note that for each $\alpha < \sigma$ we have an exact complex $Q_\alpha$ of projectives with $Z_m(Q_\alpha) = M_\alpha$ for every $m \in \mathbb{Z}$. Moreover, the quotient $Q_{\alpha + 1} / Q_\alpha$ is an exact complex of projective modules such that $Z_m(Q_{\alpha + 1} / Q_\alpha) = M_{\alpha + 1} / M_\alpha \in {}^\perp\langle B^+_0 \rangle$ for every $m \in \mathbb{Z}$, that is, $\mathsf{Hom}_R(Q_{\alpha + 1} / Q_\alpha,A)$ is exact for every $A \in \langle B^+_0 \rangle$. By Lemma~\ref{lem:characterisation_pcoresolved}, we have that $M_{\alpha + 1} / M_\alpha \in \mathcal{PGF}_{\mathcal{B}}(R)$ for every $\alpha < \sigma$. It follows that $M$ is $\mathcal{S}_{\mathcal{B}}$-filtered, and hence $N \in \mathcal{F}_{\mathcal{B}}$. 

\item $\mathcal{PGF}_{\mathcal{B}}(R) \supseteq \mathcal{F}_{\mathcal{B}}$: Let $N \in \mathcal{F}_{\mathcal{B}}$. Then, $N$ is a direct summand of a $\mathcal{S}_{\mathcal{B}}$-filtered module $M$, that is, $M = \varinjlim_{\alpha \leq \sigma} M_\alpha$ such that $M_0 = 0$ and $M_{\alpha + 1} / M_\alpha \in \mathcal{S}_{\mathcal{B}}$ for every $\alpha < \sigma$. In other words, $M$ is a transfinite extension of $\mathcal{PGF}_{\mathcal{B}}(R)$. By Theorem \ref{theo:closure_PGFB}, we have that $M \in \mathcal{PGF}_{\mathcal{B}}(R)$. It follows that $N \in \mathcal{PGF}_{\mathcal{B}}(R)$. 
\end{itemize}

So far, we have proved that $(\mathcal{PGF}_{\mathcal{B}}(R),(\mathcal{PGF}_{\mathcal{B}}(R))^\perp)$ is a cotorsion pair cogenerated by the set $\mathcal{S}_{\mathcal{B}}$, and so it is complete. The fact that this pair is hereditary follows by Theorem~\ref{theo:closure_PGFB}.

\item \underline{Chain complex setting}: The proof in the category of chain complexes follows similarly. Indeed, we can consider the set $\mathscr{S}_{\mathscr{B}}$ of representatives of $\nu$-presented complexes in $\mathscr{PGF}_{\mathscr{B}}(R)$, for some regular cardinal $\nu$ making $R$ a $\nu$-Noetherian ring. By \cite[Proposition 1.7]{Stovicek}, ${}^\perp(\mathscr{S}_{\mathscr{B}}^\perp)$ coincides with the class of direct summands of $\mathscr{S}_{\mathscr{B}}$-filtered complexes, since $\mathscr{S}_{\mathscr{B}}$ contains the family of generators of $\Ch(R)$ given by $D^m(R)$ with $m \in \mathbb{Z}$. The proof of the inclusion ${}^\perp(\mathscr{S}_{\mathscr{B}}^\perp) \supseteq \mathscr{PGF}_{\mathscr{B}}(R)$ follows as in the module case, by the second part of Lemma \ref{lem:characterisation_pcoresolved} and the chain complex versions of Lemma \ref{lem:Tor} and \cite[Propositions 3.2 and 3.6]{SarochStovicek}. It is important to mention that the techniques from {\v{S}}aroch and {{\v{S}{t'o}}}v{\'{\i}}{\v{c}}ek's work are also valid in the category $\Ch(R)$ of chain complexes, due to the comments at the beginning of Section 1 in \cite{SarochStovicek}. The rest of the proof follows by the fact that $\mathscr{PGF}_{\mathscr{B}}(R)$ is a resolving class closed under transfinite extensions, due to the chain complex version of Theorem \ref{theo:closure_PGFB}. 
\end{itemize}
\end{proof}

%%%%%%%%%%%%%%%%%%%%%%%%%%%%%%%%%%%%%
%%%%%%%%%%%%%%%%%%%%%%%%%%%%%%%%%%%%%

\subsection*{\textbf{Approximations and cotorsion pairs from relative Gorenstein flat objects}}

We are now ready to state the following characterisation of $\mathcal{GF}_{\mathcal{B}}(R)$ involving the class $\mathcal{PGF}_{\mathcal{B}}(R)$, which is a relative version of the absolute case $\mathcal{B} = \mathcal{I}(R^{\rm op})$ proved in \cite[Theorem 3.10]{SarochStovicek}. We shall also extend this relative version of \cite[Theorem 3.10]{SarochStovicek} to the category of complexes.

\begin{theorem}\label{theo:equivalences_GF}
Let $\mathcal{B}$ be a semi-definable class of right $R$-modules. Then, the following conditions are equivalent for every $M \in \mathsf{Mod}(R)$: 
\begin{itemize}
\item[(a)] $M$ is Gorenstein $\mathcal{B}$-flat.

\item[(b)] There is a short exact sequence of modules 
\[
0 \to  F \to L \to M \to 0
\] 
with $F \in \mathcal{F}(R)$ and $L \in \mathcal{PGF}_{\mathcal{B}}(R)$, which is also $\Hom_R(-,\mathcal{C}(R))$-acyclic, that is, the induced sequence
\[
0 \to \Hom_R(M,C) \to \Hom_R(L,C) \to \Hom_R(F,C) \to 0
\]
of abelian groups is exact for every $C \in \mathcal{C}(R)$.

\item[(c)] $\Ext^1_R(M,C) = 0$ for every $C \in \mathcal{C}(R) \cap (\mathcal{PGF}_{\mathcal{B}}(R))^\perp$.

\item[(d)] There is a short exact sequence of modules 
\[
0 \to M \to F \to L \to 0
\] 
with $F \in \mathcal{F}(R)$ and $L \in \mathcal{PGF}_{\mathcal{B}}(R)$. In particular, we have the equality 
\[
\mathcal{GF}_{\mathcal{B}}(R) \cap (\mathcal{PGF}_{\mathcal{B}}(R))^\perp = \mathcal{F}(R).\footnote{The same result holds in the category $\Ch(R)$ of chain complexes by replacing $\mathcal{F}(R)$, $\mathcal{C}(R)$, $\mathcal{PGF}_{\mathcal{B}}(R)$ and $\mathcal{GF}_{\mathcal{B}}(R)$ by their corresponding analogs in $\Ch(R)$.}
\] 
\end{itemize}
\end{theorem}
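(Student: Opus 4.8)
The plan is to prove the chain of implications (a) $\Rightarrow$ (b) $\Rightarrow$ (c) $\Rightarrow$ (a), and then derive (d) separately. The starting point for (a) $\Rightarrow$ (b) is that if $M$ is Gorenstein $\mathcal{B}$-flat, then by Definition~\ref{def:relGflat} it sits as a cycle $Z_0(F)$ of a $(\mathcal{B}\otimes_R-)$-acyclic exact complex $F$ of flat modules, so in particular there is a short exact sequence $0\to M'\to F_0\to M\to 0$ with $F_0$ flat and $M'$ again Gorenstein $\mathcal{B}$-flat. I would iterate this to build a flat resolution which is $(\mathcal{B}\otimes_R-)$-acyclic, and then run the standard argument (as in \cite[Theorem 3.10]{SarochStovicek}, now using the complete cotorsion pair $(\mathcal{PGF}_{\mathcal{B}}(R),\mathcal{PGF}_{\mathcal{B}}(R)^\perp)$ from Theorem~\ref{theo:cotorsion_pair_PGFB}) to replace the flat resolution by a $\mathcal{PGF}_{\mathcal{B}}(R)$-resolution; taking syzygies one obtains the short exact sequence $0\to F\to L\to M\to 0$ with $F$ flat and $L\in\mathcal{PGF}_{\mathcal{B}}(R)$. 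For the $\Hom_R(-,\mathcal{C}(R))$-acyclicity, I would use that $\mathcal{F}(R)\subseteq{}^\perp\mathcal{C}(R)$ (i.e.\ $\Ext^1_R(\mathcal{F}(R),\mathcal{C}(R))=0$, since $(\mathcal{F}(R),\mathcal{C}(R))$ is a cotorsion pair) together with the way $L$ is assembled from the acyclic complexes; more precisely one shows that every such sequence can be taken to be $\Hom_R(-,\mathcal{C}(R))$-acyclic because the relevant complex of flats used in building it is itself $\Hom_R(-,\mathcal{C}(R))$-acyclic, which follows from flats being $\mathcal{C}(R)$-acyclic and a dimension-shift.

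The implication (b) $\Rightarrow$ (c) is the formal part. Given $C\in\mathcal{C}(R)\cap\mathcal{PGF}_{\mathcal{B}}(R)^\perp$, apply $\Hom_R(-,C)$ to $0\to F\to L\to M\to 0$. The long exact sequence gives $\Ext^1_R(M,C)$ sitting between $\Hom_R(F,C)$ (quotiented by the image of $\Hom_R(L,C)$, which vanishes by the assumed $\Hom_R(-,\mathcal{C}(R))$-acyclicity) and $\Ext^1_R(L,C)$, and the latter is $0$ since $L\in\mathcal{PGF}_{\mathcal{B}}(R)$ and $C\in\mathcal{PGF}_{\mathcal{B}}(R)^\perp$; hence $\Ext^1_R(M,C)=0$. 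For (c) $\Rightarrow$ (a), I would invoke the description \eqref{eqn:GF_description}, namely $\mathcal{GF}_{\mathcal{B}}(R)={}^\perp(\mathcal{C}(R)\cap\mathcal{PGF}_{\mathcal{B}}(R)^\perp)$, which holds for semi-definable $\mathcal{B}$; condition (c) says exactly $M\in{}^\perp(\mathcal{C}(R)\cap\mathcal{PGF}_{\mathcal{B}}(R)^\perp)$. (If one prefers not to presuppose \eqref{eqn:GF_description}, one runs instead the Salce-type argument: using that the cotorsion pair generated by $\mathcal{PGF}_{\mathcal{B}}(R)$ is complete, take a special $\mathcal{PGF}_{\mathcal{B}}(R)$-precover of $M$, i.e.\ $0\to Y\to L\to M\to 0$ with $L\in\mathcal{PGF}_{\mathcal{B}}(R)$ and $Y\in\mathcal{PGF}_{\mathcal{B}}(R)^\perp$, then a special $\mathcal{F}(R)$-preenvelope of $Y$, and splice to realize $M$ as a cycle of an exact $\mathcal{B}$-acyclic complex of flats using the hypothesis to kill the relevant $\Ext^1$; this is exactly the path in \cite[Theorem 3.10]{SarochStovicek}.)

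For (d): starting from (a), use Lemma~\ref{characterisations}(b), which gives an exact $(\mathcal{B}\otimes_R-)$-acyclic coresolution $0\to M\to F^0\to F^1\to\cdots$ by flats; the cosyzygy $L:=\CoKer(M\to F^0)$ is again Gorenstein $\mathcal{B}$-flat, and I would upgrade it to lie in $\mathcal{PGF}_{\mathcal{B}}(R)$ by the same replacement argument as in (a) $\Rightarrow$ (b) (or, dually, by applying completeness of $(\mathcal{PGF}_{\mathcal{B}}(R),\mathcal{PGF}_{\mathcal{B}}(R)^\perp)$ and a pullback to absorb a projective into the flat term), yielding $0\to M\to F\to L\to 0$ with $F$ flat and $L\in\mathcal{PGF}_{\mathcal{B}}(R)$; conversely, given such a sequence, $M$ is Gorenstein $\mathcal{B}$-flat by Lemma~\ref{characterisations}(c) since $\mathcal{PGF}_{\mathcal{B}}(R)\subseteq\mathcal{GF}_{\mathcal{B}}(R)$. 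Finally, for the displayed equality: if $M\in\mathcal{GF}_{\mathcal{B}}(R)\cap\mathcal{PGF}_{\mathcal{B}}(R)^\perp$, the sequence $0\to M\to F\to L\to 0$ with $L\in\mathcal{PGF}_{\mathcal{B}}(R)$ splits because $\Ext^1_R(L,M)=0$, so $M$ is a direct summand of the flat module $F$, hence flat; the reverse inclusion $\mathcal{F}(R)\subseteq\mathcal{GF}_{\mathcal{B}}(R)$ is immediate from the definitions (flats are cycles of exact acyclic complexes of flats — or use Lemma~\ref{characterisations}(c) with $G=0$) and $\mathcal{F}(R)\subseteq\mathcal{PGF}_{\mathcal{B}}(R)^\perp$ is Proposition~\ref{prop:PGFB_definable_R}(2).

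The main obstacle I expect is establishing the $\Hom_R(-,\mathcal{C}(R))$-acyclicity in part (b): it requires a careful bookkeeping argument showing that the complex of flat modules one uses to resolve $M$ by projectively coresolved Gorenstein $\mathcal{B}$-flats can be chosen so that $\Hom_R(-,C)$ stays exact for all cotorsion $C$ — this is where one really needs the interplay between the flat cotorsion pair and the $\mathcal{PGF}_{\mathcal{B}}(R)$-cotorsion pair, and it is the step that mirrors the most technical portion of \v{S}aroch and \v{S}\v{t}ov\'{\i}\v{c}ek's proof rather than being purely formal.
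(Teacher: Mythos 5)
Your overall architecture is close to the paper's, with one consequential misstep. The paper proves the cycle $\textrm{(a)}\Rightarrow\textrm{(b)}\Rightarrow\textrm{(c)}\Rightarrow\textrm{(d)}\Rightarrow\textrm{(a)}$, with $\textrm{(c)}\Rightarrow\textrm{(d)}$ carrying the real weight: one first takes the special $\mathcal{PGF}_{\mathcal{B}}(R)^\perp$-preenvelope $0\to M\to U\to T\to 0$ coming from Theorem~\ref{theo:cotorsion_pair_PGFB}, verifies via dimension shifting that $U$ inherits the vanishing condition in (c), then takes a (pure) special $\mathcal{PGF}_{\mathcal{B}}(R)$-precover $0\to K\to L\to U\to 0$ and pulls back along $M\to U$; from the resulting grid one concludes $U\in\mathcal{F}(R)$, so $0\to M\to U\to T\to 0$ itself witnesses (d). Your primary route for $\textrm{(c)}\Rightarrow\textrm{(a)}$ instead invokes \eqref{eqn:GF_description}, but that display is \emph{precisely} the equivalence $\textrm{(a)}\Leftrightarrow\textrm{(c)}$ of the theorem you are proving (the paper derives \eqref{eqn:GF_description} immediately \emph{after} this theorem as a corollary), so it is circular and cannot be used. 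Your fallback sketch — special $\mathcal{PGF}_{\mathcal{B}}(R)$-precover of $M$ followed by a ``special $\mathcal{F}(R)$-preenvelope'' of the kernel — is not quite the right recipe either: the flat cotorsion pair $(\mathcal{F}(R),\mathcal{C}(R))$ gives special $\mathcal{F}(R)$-\emph{precovers} and $\mathcal{C}(R)$-\emph{preenvelopes}, and the object you need to approximate first is $U$ (sitting on the codomain side of $M$), not the kernel $Y$ of a precover of $M$.

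On the positive side, your $\textrm{(b)}\Rightarrow\textrm{(c)}$ is exactly the paper's argument (an observation the paper itself flags as a simplification over \cite{SarochStovicek}), your derivation of the equality $\mathcal{GF}_{\mathcal{B}}(R)\cap(\mathcal{PGF}_{\mathcal{B}}(R))^\perp=\mathcal{F}(R)$ from (d) is correct, and you correctly locate the technical burden of the theorem in the $\Hom_R(-,\mathcal{C}(R))$-acyclicity of the sequence in (b). Your proposed $\textrm{(a)}\Rightarrow\textrm{(d)}$ — take $L':=\CoKer(M\to F^0)\in\mathcal{GF}_{\mathcal{B}}(R)$, apply $\textrm{(a)}\Rightarrow\textrm{(b)}$ to $L'$ to obtain $0\to F'\to L''\to L'\to 0$, and then pull back $F^0\to L'\leftarrow L''$ to absorb the flat pieces — is a legitimate shortcut that would let you close the cycle as $\textrm{(d)}\Rightarrow\textrm{(a)}\Rightarrow\textrm{(b)}\Rightarrow\textrm{(c)}$ together with a non-circular $\textrm{(c)}\Rightarrow\textrm{(d)}$; but as written the ``upgrade'' step is only gestured at, and you still must supply the $\textrm{(c)}\Rightarrow\textrm{(d)}$ link. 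Until that is done (ideally by the paper's preenvelope-then-precover-then-pullback route), there is a genuine gap.
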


\begin{proof} \
\begin{itemize}
\item \underline{Module setting}: Follows by the same arguments employed in \cite[Theorem 3.10]{SarochStovicek}. We only mention some particular points concerning this relative case: 
\begin{itemize}
\item (a) $\Rightarrow$ (b) follows as in \cite{SarochStovicek}.

\item (b) $\Rightarrow$ (c): We shall use an argument slightly different from that in \cite{SarochStovicek}. Suppose we are given a short exact sequence
\[
0 \to  F \to L \to M \to 0
\] 
as in (b). Consider $C \in \mathcal{C}(R) \cap (\mathcal{PGF}_{\mathcal{B}}(R))^\perp$. We have an exact sequence
\[
\Hom_R(L,C) \xrightarrow{\varphi} \Hom_R(F,C) \to \Ext^1_R(M,C) \to \Ext^1_R(L,C)
\]
where $\Ext^1_R(L,C) = 0$ since $L \in \mathcal{PGF}_{\mathcal{B}}(R)$, and $\varphi$ is epic. Hence, $\Ext^1_R(M,C) = 0$. 

\item (c) $\Rightarrow$ (d): Using a pushout argument, along with the inclusion $\mathcal{F}(R) \subseteq (\mathcal{PGF}_{\mathcal{B}}(R))^\perp$ from Proposition \ref{prop:PGFB_definable_R} (2), one can show that every module in ${}^\perp(\mathcal{C}(R) \cap (\mathcal{PGF}_{\mathcal{B}}(R))^\perp)$ has a pure special $\mathcal{PGF}_{\mathcal{B}}(R)$-precover. Now consider a short exact sequence 
\[
0 \to M \to U \to T \to 0
\] 
with $U \in (\mathcal{PGF}_{\mathcal{B}}(R))^\perp$ and $T \in \mathcal{PGF}_{\mathcal{B}}(R)$, resulting from Theorem~\ref{theo:cotorsion_pair_PGFB}. Let $C \in (\mathcal{PGF}_{\mathcal{B}}(R))^\perp$ be a cotorsion module. Then, we have an exact sequence
\[
\Ext^1_R(T,C) \to \Ext^1_R(U,C) \to \Ext^1_R(M,C)
\]
where $\Ext^1_R(T,C) = 0$ and $\Ext^1_R(M,C) = 0$. Then, $U \in {}^\perp(\mathcal{C}(R) \cap (\mathcal{PGF}_{\mathcal{B}}(R))^\perp)$ and so we can consider a pure special $\mathcal{PGF}_{\mathcal{B}}(R)$-precover of $U$, say 
\[
0 \to K \to L \to U \to 0
\] 
with $K \in (\mathcal{PGF}_{\mathcal{B}}(R))^\perp$ and $L \in \mathcal{PGF}_{\mathcal{B}}(R)$. Now take the pullback of $M \to U \leftarrow L$ to get the following commutative diagram with exact rows and columns:
\[
\begin{tikzpicture}[description/.style={fill=white,inner sep=2pt}] 
\matrix (m) [matrix of math nodes, row sep=2.5em, column sep=2.5em, text height=1.25ex, text depth=0.25ex] 
{ 
{} & K & K & {} & {} \\
0 & N & L & T & 0 \\
0 & M & U & T & 0 \\
}; 
\path[>->]
(m-1-2) edge (m-2-2) (m-1-3) edge (m-2-3)
;
\path[->>]
(m-2-2) edge (m-3-2) (m-2-3) edge (m-3-3)
;
\path[->] 
(m-2-1) edge (m-2-2) (m-2-2) edge (m-2-3) (m-2-3) edge (m-2-4) (m-2-4) edge (m-2-5)
(m-3-1) edge (m-3-2) (m-3-2) edge (m-3-3) (m-3-3) edge (m-3-4) (m-3-4) edge (m-3-5) 
; 
\path[-,font=\scriptsize]
(m-1-2) edge [double, thick, double distance=2pt] (m-1-3)
(m-2-4) edge [double, thick, double distance=2pt] (m-3-4)
;
\end{tikzpicture} 
\]
Then, $L \in \mathcal{PGF}_{\mathcal{B}}(R) \cap (\mathcal{PGF}_{\mathcal{B}}(R))^\perp$. It follows that $L$ is a direct summand of a projective module, and so $U$ is a pure epimorphic image of a projective module, which is turn implies that $U \in \mathcal{F}(R)$. Therefore, the implication follows. 

\item (d) $\Rightarrow$ (a): Follows as in \cite{SarochStovicek} but using Lemma~\ref{characterisations} instead of \cite[Lemma 2.4]{BennisGF}. 
\end{itemize}

\item \underline{Chain complex setting}: For the proof in the category $\Ch(R)$ of chain complexes, the implications (b) $\Rightarrow$ (c), (c) $\Rightarrow$ (d) and (d) $\Rightarrow$ (a) follow as above, due to conditions (3) and (4) in Proposition \ref{prop:PGFB_definable_R} and to the chain complex versions of Lemma \ref{characterisations} and Theorem \ref{theo:cotorsion_pair_PGFB}. On the other hand, the implication (a) $\Rightarrow$ (b) in the module case is based, in part, on the following two facts: 
\begin{itemize} 
\item Bravo et al. \cite[Theorem 4.5]{Bravoetal}: $(\Ch(\mathcal{P}(R)),\Ch(\mathcal{P}(R))^\perp)$ is a complete cotorsion pair in $\Ch(R)$, where $\Ch(\mathcal{P}(R))$ denotes the class of complexes of projective modules. 

\item Neeman's \cite[Lemma 8.4]{Neeman}: Every complex of flat modules is a direct limit from complexes in $\Ch(\mathcal{P}(R))$. 
\end{itemize}
One can note after a careful revision of these two results that the arguments employed in \cite{Bravoetal,Neeman} carry over to the category $\Ch(\Ch(R))$ of complexes of chain complexes, that is, if $\mathscr{P}(R)$ denotes the class of projective complexes and $\Ch(\mathscr{P}(R))$ the class of complexes of projective complexes, then $(\Ch(\mathscr{P}(R)),\Ch(\mathscr{P}(R))^\perp)$ is a complete cotorsion pair in $\Ch(\Ch(R))$ and every complex of flat complexes is a direct limit of complexes in $\Ch(\mathscr{P}(R))$. 

After this observation, it is not hard to check that the proof for {\v{S}}aroch and {{\v{S}{t'o}}}v{\'{\i}}{\v{c}}ek's \cite[implication (1) $\Rightarrow$ (2) in Theorem 3.10]{SarochStovicek} also works for the relative case in the context of chain complexes. 
\end{itemize}
\end{proof}

In what follows, let us denote by $\mathscr{C}(R) := (\mathscr{F}(R))^\perp$ the class of all cotorsion complexes. By the previous result, we can write 
\begin{align*}
\mathcal{GF}_\mathcal{B}(R) & = {}^\perp(\mathcal{C}(R) \cap (\mathcal{PGF}_{\mathcal{B}}(R))^\perp) & \text{and} & &  \mathscr{GF}_\mathscr{B}(R) & = {}^\perp(\mathscr{C}(R) \cap (\mathscr{PGF}_{\mathscr{B}}(R))^\perp),
\end{align*}
and hence $\mathcal{GF}_{\mathcal{B}}(R)$ and $\mathscr{GF}_\mathscr{B}(R)$ are both closed under extensions, whenever $\mathcal{B}$ and $\mathscr{B}$ are semi-definable classes. Along with some other properties of $\mathcal{GF}_{\mathcal{B}}(R)$ and $\mathscr{GF}_\mathscr{B}(R)$, the latter will allow us to construct hereditary complete cotorsion pairs 
\begin{align*}
(\mathcal{GF}_{\mathcal{B}}(R),(\mathcal{GF}_{\mathcal{B}}(R))^\perp) & & \text{and} & & (\mathscr{GF}_{\mathscr{B}}(R),(\mathscr{GF}_{\mathscr{B}}(R))^\perp)
\end{align*} 
in $\mathsf{Mod}(R)$ and $\mathsf{Ch}(R)$, respectively. We shall also construct the hereditary complete cotorsion pair 
\[
(\mathscr{GF}^{\otimes^.}_{\mathscr{B}}(R),(\mathscr{GF}^{\otimes^.}_{\mathscr{B}}(R))^\perp)
\] 
in the case where $\mathscr{GF}^{\otimes^.}_{\mathscr{B}}(R)$ is closed under extensions. Before constructing these pairs, let us show that $\mathcal{GF}_{\mathcal{B}}(R)$ and $\mathscr{GF}_{\mathscr{B}}(R)$ are precovering Kaplansky classes of modules and complexes for any choice of $\mathcal{B} \subseteq \mathsf{Mod}(R)$ and $\mathscr{B} \subseteq \mathsf{Ch}(R)$. For this property, there will be a remarkable difference between the proofs for $\mathsf{Mod}(R)$ and $\mathsf{Ch}(R)$, so we shall state and prove these results separately.

\begin{proposition}\label{Kaplansky}
The class $\mathcal{GF}_{\mathcal{B}}(R)$ of Gorenstein $\mathcal{B}$-flat modules is a precovering Kaplansky class for any class $\mathcal{B}$ of right $R$-modules.
\end{proposition}

\begin{proof}
Let ${}_{\mathcal{B}}\widetilde{\mathcal{F}}$ be the class of exact and $(\mathcal{B} \otimes_R -)$-acyclic complexes of flat modules. By \cite[Theorem 3.7]{EG15}, the class ${}_{\mathcal{B}}\widetilde{\mathcal{F}}$ is a Kaplansky class.  Moreover, ${}_{\mathcal{B}}\widetilde{\mathcal{F}}$ is also  closed under direct limits, extensions, direct summands, and contains a generator for $\Ch(R)$. By Gillespie's \cite[Proposition 4.8]{GillespieKaplansky}, we have that $({}_{\mathcal{B}}\widetilde{\mathcal{F}},({}_{\mathcal{B}}\widetilde{\mathcal{F}})^{\perp})$ is a small cotorsion pair in $\Ch(R)$, and so cogenerated by a set, which in turn implies completeness since $\Ch(R)$ is a Grothendieck category with enough projective objects. Finally, since the hypotheses in Aldrich et al. \cite[Corollaries 2.11, 2.12 and 2.13]{AEGO} are satisfied, we have that $({}_{\mathcal{B}}\widetilde{\mathcal{F}},({}_{\mathcal{B}}\widetilde{\mathcal{F}})^{\perp})$ is a perfect cotorsion pair. Therefore, we can note that the class $\mathcal{GF}_{\mathcal{B}}(R)$ is Kaplansly (as it is the class of $0$-cycles of complexes in the Kaplansky class ${}_{\mathcal{B}}\widetilde{\mathcal{F}}$) and precovering (proceeding as in Yang and Liang \cite[Theorem A]{YangLiang}).
\end{proof}

We shall need the chain complex version of \cite[Theorem 3.7]{EG15} in order to show that $\mathscr{GF}_{\mathscr{B}}(R)$ is a precovering Kaplansky class. Below we shall show that the class ${}_{\mathscr{B}}\widetilde{\mathscr{F}}$ of exact and $(\mathscr{B} \otimes -)$-acyclic complexes of flat complexes is a Kaplansky class for any $\mathscr{B} \subseteq \Ch(R\op)$. 

The next two statements correspond to more general properties for closed symmetric monoidal Grothendieck categories proved in \cite{BEGIP}. We present sketches of their proofs for the setting of complexes of left and right $R$-modules.

\begin{lemma}\label{lem:Banff1}
Let $X_\bullet$ be a complex in ${}_{\mathscr{B}} \widetilde{\mathscr{F}}$ and $X'_\bullet$ be an exact subcomplex of $X_\bullet$ such that $Z_m(X'_\bullet) \subseteq Z_m(X_\bullet)$ and $X'_m \subseteq X_m$ are pure containments for all $m \in \mathbb{Z}$. Then, the complexes of complexes $X'_\bullet$ and $X_\bullet / X'_\bullet$ belong to ${}_{\mathscr{B}}\widetilde{\mathscr{F}}$. 
\end{lemma}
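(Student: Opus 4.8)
The plan is to prove both claims---that $X'_\bullet$ and $X_\bullet/X'_\bullet$ lie in ${}_{\mathscr{B}}\widetilde{\mathscr{F}}$---by verifying the three defining conditions separately: each $X'_m$ and $X_m/X'_m$ is a flat complex, the complexes of complexes $X'_\bullet$ and $X_\bullet/X'_\bullet$ are exact, and they are $(\mathscr{B}\otimes-)$-acyclic. The guiding idea throughout is that a pure subobject and pure quotient of a flat object is flat, and that purity is inherited well through the constructions involved.

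First I would handle flatness of the entries. For each $m$, the complex $X_m$ is flat and $X'_m\subseteq X_m$ is a pure submodule-complex; since flat complexes form a definable class (being the left-hand side of a duality-type situation, or directly: $F$ is flat iff $F$ is exact with flat cycles, a condition visibly closed under pure subobjects and pure quotients), both $X'_m$ and $X_m/X'_m$ are flat complexes. Concretely one uses that $Z_m(X'_\bullet)\subseteq Z_m(X_\bullet)$ is pure and $X'_m\subseteq X_m$ is pure, together with the snake/five-lemma, to see the boundaries are also pure containments, hence all the relevant module-level pieces remain flat.

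Next, exactness of $X'_\bullet$ and $X_\bullet/X'_\bullet$ as complexes of complexes. Since $X_\bullet$ is exact and all three rows of $0\to X'_\bullet\to X_\bullet\to X_\bullet/X'_\bullet\to 0$ fit into a short exact sequence of complexes-of-complexes, the long exact homology sequence reduces the exactness of the two outer terms to showing that the connecting maps vanish, which follows from the purity hypotheses (a pure subcomplex inclusion induces a pure, hence in particular injective-on-homology, map). Then for $(\mathscr{B}\otimes-)$-acyclicity, fix $B\in\mathscr{B}$ and apply $B\otimes-$ to the short exact sequence $0\to X'_\bullet\to X_\bullet\to X_\bullet/X'_\bullet\to 0$; because the inclusion is levelwise pure, this sequence stays exact after tensoring with $B$, giving a short exact sequence of complexes $0\to B\otimes X'_\bullet\to B\otimes X_\bullet\to B\otimes(X_\bullet/X'_\bullet)\to 0$. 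Since $B\otimes X_\bullet$ is exact by hypothesis, the long exact sequence again shows $B\otimes X'_\bullet$ and $B\otimes(X_\bullet/X'_\bullet)$ are exact provided the connecting homomorphisms die---which they do, once more by purity of the inclusion $X'_\bullet\hookrightarrow X_\bullet$ at each level.

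The main obstacle I anticipate is bookkeeping the two different ``directions'' of purity in this bicomplex setting: purity must be controlled both in the complex-index $m$ (so that $B\otimes-$ stays exact on the short exact sequence) and internally within each $X_m$ as a complex (so that the entries remain flat complexes and the cycles behave). The hypothesis has been stated precisely so that both $Z_m(X'_\bullet)\subseteq Z_m(X_\bullet)$ and $X'_m\subseteq X_m$ are pure, and the real work is to leverage these to deduce that $B_m(X'_\bullet)\subseteq B_m(X_\bullet)$ is pure and that the modified tensor product $B\otimes-$ (which on each degree is built from the usual tensor of the component modules, quotiented by boundaries) preserves the resulting exact sequences. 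I would isolate this as a small lemma---``if $X'_\bullet\subseteq X_\bullet$ is pure at the cycle and total levels then it is pure at the boundary level, and $B\otimes-$ preserves the associated short exact sequence''---and then the three verifications above become routine diagram chases with the long exact homology sequence.
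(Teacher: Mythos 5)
The decomposition into flatness of entries, exactness, and $(\mathscr{B}\otimes-)$-acyclicity matches the paper's, and the flatness step is handled in the same way via purity and \cite[Lemma 4.7]{GillespieFlat}. You diverge from the paper by working with the global short exact sequence $0\to X'_\bullet\to X_\bullet\to X_\bullet/X'_\bullet\to 0$ and the long exact homology sequence; the paper instead works degree by degree with the short exact sequences $\eta'_m\colon 0\to Z_m(X'_\bullet)\to X'_m\to Z_{m-1}(X'_\bullet)\to 0$ sitting inside $\eta_m\colon 0\to Z_m(X_\bullet)\to X_m\to Z_{m-1}(X_\bullet)\to 0$, applies $B\otimes-$ directly, obtains exactness of $B\otimes\eta'_m$ at the middle and right from right-exactness of $B\otimes-$, and monicity at the left by a short diagram chase (using that $B\otimes\eta_m$ is exact because $X_\bullet\in{}_{\mathscr{B}}\widetilde{\mathscr{F}}$ and that the vertical comparisons are monic by purity). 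The quotient case is a $3\times3$-type argument, see \eqref{fig6}; no long exact sequence is ever invoked.

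The gap in your proposal is precisely in the step you flag as the risk: the claimed vanishing of the connecting homomorphisms. In your $(\mathscr{B}\otimes-)$-acyclicity step, after tensoring with $B$ you obtain $0\to B\otimes X'_\bullet\to B\otimes X_\bullet\to B\otimes(X_\bullet/X'_\bullet)\to 0$, which is levelwise exact because $B\otimes-$ preserves pure monomorphisms. But to kill the connecting maps you would need $H_m(B\otimes X'_\bullet)\to H_m(B\otimes X_\bullet)$ injective, and nothing forces the tensored sequence to remain ``pure enough'' in $\Ch(\Ch(\Ab))$ (i.e.\ to have exact degreewise cycle sequences, which is what the ``pure implies injective on homology'' mechanism requires). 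The modified tensor $B\otimes-$ only preserves levelwise purity, not the cycle-level purity of the sequence; so ``once more by purity of the inclusion at each level'' does not close the argument. The paper's degreewise route avoids the issue entirely: right-exactness and a single diagram chase give $B\otimes\eta'_m$ exact without ever needing to control the purity of a tensored sequence. Your proposed ``small lemma'' does correctly name the obstruction (how purity at the boundary/cycle level interacts with $B\otimes-$), but the LES sketch leaves it unproved, whereas the degreewise route dissolves it. Note also that both your sketch and the paper's proof take for granted that the rows $\eta'_m$ are short exact (equivalently, that $X'_\bullet$ is exact); in the paper's intended application this is verified separately in step (3) of the proof of Lemma~\ref{lem:Banff2}, using the full $\kappa$-purity in $\Ch(\Ch(R))$ of the specific subcomplex constructed there.
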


\begin{proof}
We split the proof into three parts:
\begin{enumerate}
\item The complexes $X'_m, X_m / X'_m \in \Ch(R)$ are flat: This follows by applying \cite[Lemma 4.7]{GillespieFlat} to the pure containment $X'_m \subseteq X_m$.  

\item The complex $X'_\bullet$ belongs to ${}_{\mathscr{B}}\widetilde{\mathscr{F}}$: Consider the following commutative diagram where the vertical arrows are pure embeddings: 
\begin{equation}\label{fig4} 
\parbox{4in}{
\begin{tikzpicture}[description/.style={fill=white,inner sep=2pt}] 
\matrix (m) [ampersand replacement=\&, matrix of math nodes, row sep=3em, column sep=2.5em, text height=1.25ex, text depth=0.25ex] 
{ 
\eta'_m \colon 0 \& Z_m(X'_\bullet) \& X'_m \& Z_{m-1}(X'_\bullet) \& 0 \\
\eta_m \colon 0 \& Z_m(X_\bullet) \& X_m \& Z_{m-1}(X_\bullet) \& 0 \\
}; 
\path[->] 
(m-1-1) edge (m-1-2) (m-1-2) edge (m-1-3) (m-1-3) edge (m-1-4) (m-1-4) edge (m-1-5)
(m-2-1) edge (m-2-2) (m-2-2) edge (m-2-3) (m-2-3) edge (m-2-4) (m-2-4) edge (m-2-5)
; 
\path[right hook->]
(m-1-2) edge node[above,sloped] {\scriptsize$\text{pure}$} (m-2-2) (m-1-3) edge node[above,sloped] {\scriptsize$\text{pure}$} (m-2-3) (m-1-4) edge node[above,sloped] {\scriptsize$\text{pure}$} (m-2-4)
;
\end{tikzpicture} 
}
\end{equation}
Now let $B \in \mathscr{B}$ and apply the functor $B \otimes -$ to \eqref{fig4}. We obtain the commutative diagram
\[
\begin{tikzpicture}[description/.style={fill=white,inner sep=2pt}] 
\matrix (m) [matrix of math nodes, row sep=3em, column sep=2.5em, text height=1.25ex, text depth=0.25ex] 
{ 
B \otimes \eta'_m \colon 0 & B \otimes Z_m(X'_\bullet) & B \otimes X'_m & B \otimes Z_{m-1}(X'_\bullet) & 0 \\
B \otimes \eta_m \colon 0 & B \otimes Z_m(X_\bullet) & B \otimes X_m & B \otimes Z_{m-1}(X_\bullet) & 0 \\
}; 
\path[->] 
(m-1-1) edge (m-1-2) (m-1-2) edge (m-1-3) (m-1-3) edge (m-1-4) (m-1-4) edge (m-1-5)
(m-2-1) edge (m-2-2) (m-2-2) edge (m-2-3) (m-2-3) edge (m-2-4) (m-2-4) edge (m-2-5)
; 
\path[>->]
(m-1-2) edge (m-2-2) (m-1-3) edge (m-2-3) (m-1-4) edge (m-2-4)
;
\end{tikzpicture} 
\]
where $B \otimes \eta_m$ is exact since $X_\bullet \in {}_{\mathscr{B}}\widetilde{\mathscr{F}}$. Moreover, the sequence 
\[
B \otimes Z_m(X'_\bullet) \to B \otimes X'_m \to B \otimes Z_{m-1}(X'_\bullet) \to 0
\] 
is exact since the functor $B \otimes -$ is right exact, and $0 \to B \otimes Z_m(X'_\bullet) \to B \otimes X'_m$ is monic by diagram chasing. It follows that the sequence $B \otimes \eta'_m$ is exact for every $m \in \mathbb{Z}$, that is, $X'_\bullet \in {}_{\mathscr{B}} \widetilde{\mathscr{F}}$. 

\item The complex $X_\bullet / X'_\bullet$ belongs to ${}_{\mathscr{B}}\widetilde{\mathscr{F}}$: First, note that $X_\bullet / X'_\bullet$ is exact as it is a quotient of exact complexes. Then, there is a short exact sequence for every $m \in \mathbb{Z}$:
\[
0 \to Z_m(X_\bullet / X'_\bullet) \to X_m / X'_m \to Z_{m-1}(X_\bullet / X'_\bullet) \to 0.
\]
Consider the following commutative diagram for every $B \in \mathscr{B}$:
\begin{equation}\label{fig6} 
\parbox{5in}{
\begin{tikzpicture}[description/.style={fill=white,inner sep=2pt}] 
\matrix (m) [ampersand replacement=\&, matrix of math nodes, row sep=2.5em, column sep=2em, text height=1.25ex, text depth=0.25ex] 
{ 
0 \& B \otimes Z_m(X'_\bullet) \& B \otimes X'_m \& B \otimes Z_{m-1}(X'_\bullet) \& 0 \\
0 \& B \otimes Z_m(X_\bullet) \& B \otimes X_m \& B \otimes Z_{m-1}(X_\bullet) \& 0 \\
0 \& B \otimes Z_m(X_\bullet / X'_\bullet) \& B \otimes (X_m / X'_m) \& B \otimes Z_{m-1}(X_\bullet / X'_\bullet) \& 0 \\
}; 
\path[->] 
(m-1-1) edge (m-1-2) (m-1-2) edge (m-1-3) (m-1-3) edge (m-1-4) (m-1-4) edge (m-1-5)
(m-2-1) edge (m-2-2) (m-2-2) edge (m-2-3) (m-2-3) edge (m-2-4) (m-2-4) edge (m-2-5)
(m-3-1) edge (m-3-2) (m-3-2) edge (m-3-3) (m-3-3) edge (m-3-4) (m-3-4) edge (m-3-5)
; 
\path[>->]
(m-1-2) edge (m-2-2) (m-1-3) edge (m-2-3) (m-1-4) edge (m-2-4)
;
\path[->>]
(m-2-2) edge (m-3-2) (m-2-3) edge (m-3-3) (m-2-4) edge (m-3-4)
;
\end{tikzpicture} 
}
\end{equation}
The first and second rows of \eqref{fig6} are exact since $X_\bullet, X'_\bullet \in {}_{\mathscr{B}}\widetilde{\mathscr{F}}$, while the columns are exact since $X'_m \subseteq X_m$ and $Z_m(X'_\bullet) \subseteq Z_m(X_\bullet)$ are pure containments. It follows that the bottom row is exact, and hence $X_\bullet / X'_\bullet \in {}_{\mathscr{B}} \widetilde{\mathscr{F}}$. 
\end{enumerate}
\end{proof}

\begin{lemma}\label{lem:Banff2}
The class ${}_{\mathscr{B}} \widetilde{\mathscr{F}}$ is a Kaplansky class for every class $\mathscr{B} \subseteq \Ch(R\op)$ of complexes of right $R$-modules.
\end{lemma}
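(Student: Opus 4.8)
The plan is to run a standard Kaplansky-class argument adapted to complexes of complexes, leveraging \cite[Theorem 3.7]{EG15} (the module-level statement that $_{\mathcal{B}}\widetilde{\mathcal F}$ is Kaplansky) together with Lemma~\ref{lem:Banff1}. First I would fix a regular cardinal $\kappa$ large enough so that the module-level Kaplansky witness works for each of the classes of flat modules and of modules in $_{\mathcal{B}}\widetilde{\mathcal F}$, and also large enough that $R$, together with the (skeletally small) set of data defining a member of $\mathscr B$ at each degree, and all the relevant finitely generated objects, are $\kappa$-presentable; since $\Ch(\Ch(R))$ is a Grothendieck category whose finitely presented objects are bounded complexes of bounded complexes of finitely presented modules, such a $\kappa$ exists. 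The claim to prove is: given $X_\bullet \in {}_{\mathscr B}\widetilde{\mathscr F}$ nonzero and a $\kappa$-generated subobject $M_\bullet \subseteq X_\bullet$, there is a $\kappa$-presentable nonzero $S_\bullet$ with $M_\bullet \subseteq S_\bullet \subseteq X_\bullet$ and $S_\bullet, X_\bullet/S_\bullet \in {}_{\mathscr B}\widetilde{\mathscr F}$.

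The key step is the construction of $S_\bullet$ by a countable ascending chain ("back-and-forth" / union-of-a-chain) argument. Starting from $M_\bullet^{(0)} := M_\bullet$, I would successively enlarge it so as to fix two defects simultaneously: (1) purity of $S_m \subseteq X_m$ and of $Z_m(S_\bullet)\subseteq Z_m(X_\bullet)$ for every $m$, and (2) the property that each $S_m$ (a complex of $R$-modules) is itself flat, i.e.\ exact with flat cycles. For defect (1), I use that in a finitely accessible category every $\kappa$-generated subobject is contained in a $\kappa$-presentable pure subobject when $\kappa$ is chosen appropriately (this is the purity-filtration fact underlying the module Kaplansky result); I apply it to the inclusions $M_m \subseteq X_m$ and also to the cycle submodules inside $Z_m(X_\bullet)$, in each degree $m$, enlarging $M_\bullet$ to absorb all the chosen pure subobjects. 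For defect (2), at each stage I enlarge further so that each component $S_m$ sits as a $\kappa$-presentable flat subcomplex of the flat complex $X_m$, invoking the module-level Kaplansky property for flat complexes of modules (equivalently \cite[Lemma 4.7]{GillespieFlat} / \cite[Theorem 3.7]{EG15} in the degreewise guise). Alternating these two enlargements countably many times and taking the union $S_\bullet := \bigcup_{n} M_\bullet^{(n)}$ yields a $\kappa$-presentable subobject that is both a degreewise-pure, cycle-pure subobject of $X_\bullet$ and a complex of flat complexes; purity is preserved under directed unions, and $\kappa$-presentability is preserved since $\kappa$ is regular.

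Once $S_\bullet$ is constructed, Lemma~\ref{lem:Banff1} does the rest: since $Z_m(S_\bullet)\subseteq Z_m(X_\bullet)$ and $S_m \subseteq X_m$ are pure for all $m$, both $S_\bullet$ and $X_\bullet/S_\bullet$ lie in ${}_{\mathscr B}\widetilde{\mathscr F}$. Nonzeroness of $S_\bullet$ is automatic since $M_\bullet \neq 0$ forces it (or, if $M_\bullet = 0$, one starts the construction from any nonzero $\kappa$-generated subobject, which exists as $X_\bullet \neq 0$). This establishes the Kaplansky condition with witnessing cardinal $\kappa$.

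The main obstacle I anticipate is bookkeeping the \emph{two} purity requirements at once — purity of the components $S_m \subseteq X_m$ and simultaneously purity of the cycles $Z_m(S_\bullet)\subseteq Z_m(X_\bullet)$ — while keeping the object $\kappa$-presentable. These are interlinked: enlarging $S_m$ changes the cycles $Z_m(S_\bullet)$, which may destroy a purity gained in a previous step, so one genuinely needs the alternating-chain (rather than one-shot) construction and a careful verification that the countable union stabilizes both conditions. A secondary subtlety is choosing $\kappa$ uniformly for all the finitary data involved, including the complexes in $\mathscr B$; this is where one uses that $\mathscr B$, living in a finitely accessible category, contributes only a set's worth of $\kappa$-presentable test objects up to the relevant cardinal, so a single regular $\kappa$ suffices. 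Everything else is a routine transcription of the module argument to $\Ch(\Ch(R))$.
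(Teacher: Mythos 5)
The proposal omits a necessary condition on $S_\bullet$: it must be an \emph{exact} complex of flat complexes, not merely a complex of flat complexes satisfying the two purity conditions. Componentwise purity ($S_m \subseteq X_m$ pure in $\Ch(R)$) together with cycle purity ($Z_m(S_\bullet) \subseteq Z_m(X_\bullet)$ pure) do \emph{not} force $S_\bullet$ to be exact. For example, take $X_\bullet$ to be the complex in $\Ch(\Ch(R))$ concentrated in degrees $1,0$ given by $0 \to D^0(R) \xrightarrow{\mathrm{id}} D^0(R) \to 0$, and let $S_\bullet$ be the subcomplex $0 \to 0 \to D^0(R) \to 0$. Then every inclusion $S_m \subseteq X_m$ and $Z_m(S_\bullet) \subseteq Z_m(X_\bullet)$ is either zero or the identity, hence pure, yet $S_\bullet$ is not exact and neither $S_\bullet$ nor $X_\bullet/S_\bullet$ belongs to ${}_{\mathscr{B}}\widetilde{\mathscr{F}}$. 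This also shows that the two componentwise purities are strictly weaker than purity of $S_\bullet \subseteq X_\bullet$ in $\Ch(\Ch(R))$. Moreover, a careful reading of Lemma~\ref{lem:Banff1} shows that it tacitly assumes $X'_\bullet$ is exact: the sequences $\eta'_m$ appearing in its proof are short exact only under that assumption. So your conclusion ``Lemma~\ref{lem:Banff1} does the rest'' cannot be drawn until exactness of $S_\bullet$ has been established separately.

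The paper avoids this issue by making a single appeal to Ad\'amek--Rosick\'y's Theorem~2.33, which produces a $\kappa$-pure subobject $X'_\bullet \subseteq Y_\bullet$ \emph{in the category $\Ch(\Ch(R))$}. This is strictly stronger than your two degreewise conditions, and from this one purity the paper extracts all three needed facts: (i) degreewise purity via the isomorphism $\Hom(D^m(W),-) \cong \Hom(W,(-)_m)$, (ii) cycle purity via $\Hom(S^m(W),-) \cong \Hom(W,Z_m(-))$, and crucially (iii) exactness of $X'_\bullet$ by a Snake Lemma argument that uses that $Z_m$ carries the pure short exact sequence of $\Ch(\Ch(R))$ to a short exact sequence in $\Ch(R)$. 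Your back-and-forth only reconstructs (i) and (ii). To repair it you would have to interleave a third type of step that witnesses every cycle of $S_\bullet$ as a boundary within $S_\bullet$, at which point you are essentially reproving the categorical pure filtration theorem that the paper cites once and for all.

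Two smaller points. Your ``defect (2)'' is redundant: once $S_m \subseteq X_m$ is pure with $X_m$ flat, \cite[Lemma~4.7]{GillespieFlat} already gives flatness of $S_m$ and $X_m/S_m$ (this is step (1) of Lemma~\ref{lem:Banff1}, not something to be arranged in the construction). And the worry about choosing $\kappa$ large enough to ``accommodate the data of $\mathscr{B}$'' is misplaced: the cardinal depends only on the accessible structure of $\Ch(\Ch(R))$, and the $(\mathscr{B}\otimes -)$-acyclicity of $S_\bullet$ and $X_\bullet/S_\bullet$ --- for every $B\in\mathscr{B}$, which could be a proper class --- is recovered abstractly from the purity conditions by Lemma~\ref{lem:Banff1}, not by presentability considerations about $\mathscr{B}$ itself.
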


\begin{proof}
We first note that by \cite[Theorem 2.33]{AdamekRosicky}, there exists a regular cardinal $\kappa$ for the category $\Ch(\Ch(R))$ such that every $\kappa$-presentable subcomplex $X_\bullet \subseteq Y_\bullet$ of a complex $Y_\bullet \in {}_{\mathscr{B}}\widetilde{\mathscr{F}}$ is contained in a $\kappa$-pure subcomplex $X'_\bullet \subseteq Y_\bullet$ with $X'_\bullet$ also $\kappa$-presentable. In what follows, we shall show that $X'_\bullet$ and $Y_\bullet / X'_\bullet$ belong to ${}_{\mathscr{B}}\widetilde{\mathscr{F}}$, that is, that ${}_{\mathscr{B}}\widetilde{\mathscr{F}}$ is $\kappa$-Kaplansky. The latter will be a consequence of Lemma~\ref{lem:Banff1}. We divide the proof into several steps:
\begin{enumerate}
\item $X'_m$ is a pure subcomplex of $Y_m$: We use the fact that every $\kappa$-pure subcomplex is a pure subcomplex. Thus, we only need to show that 
\[
0 \to X'_m \to Y_m
\] 
is a $\kappa$-pure monomorphism, that is, for any commutative diagram 
\begin{equation}\label{fig7} 
\parbox{2in}{
\begin{tikzpicture}[description/.style={fill=white,inner sep=2pt}] 
\matrix (m) [ampersand replacement=\&, matrix of math nodes, row sep=2.5em, column sep=3em, text height=1.25ex, text depth=0.25ex] 
{ 
{} \& W \& Z \\ 
0 \& X'_m \& Y_m \\
}; 
\path[->] 
(m-1-2) edge (m-1-3) edge (m-2-2)
(m-2-1) edge (m-2-2) (m-2-2) edge (m-2-3)
(m-1-3) edge (m-2-3)
; 
\path[dotted,->]
(m-1-3) edge (m-2-2)
;
\end{tikzpicture} 
}
\end{equation}
with $W$ and $Z$ $\kappa$-presentable, there exists a lifting $Z \to X'_m$ making the upper left triangle commutative. This diagram induces the following commutative diagram in $\Ch(\Ch(R))$:
\begin{equation}\label{fig8} 
\parbox{2in}{
\begin{tikzpicture}[description/.style={fill=white,inner sep=2pt}] 
\matrix (m) [ampersand replacement=\&, matrix of math nodes, row sep=2.5em, column sep=2em, text height=1.25ex, text depth=0.25ex] 
{ 
{} \& D^m(W) \& D^m(Z) \\ 
0 \& X'_\bullet \& Y_\bullet \\
}; 
\path[->] 
(m-1-2) edge (m-1-3) edge (m-2-2)
(m-2-1) edge (m-2-2) (m-2-2) edge (m-2-3)
(m-1-3) edge (m-2-3)
(m-1-3) edge (m-2-2)
; 
\end{tikzpicture} 
}
\end{equation}
where $D^m(W)$ and $D^m(Z)$ are $\kappa$-presentable since the functors
\begin{align*}
\Hom(D^m(W),-) & \cong \Hom(W,(-)_m), \\
\Hom(D^m(Z),-) & \cong \Hom(Z,(-)_m),
\end{align*}
preserve $\kappa$-directed colimits, and $\kappa$-directed colimits in $\Ch(\Ch(R))$ are computed componentwise. It follows that there exists a morphism $f_\bullet \colon D^m(Z) \to X'_\bullet$ making the upper left triangle of \eqref{fig8} commutative, since the containment $X'_\bullet \subseteq Y_\bullet$ is $\kappa$-pure. Hence, the $m$-th component $f_m \colon Z \to X'_m$ yields the desired lifting in \eqref{fig7}. 

\item $Z_m(X'_\bullet)$ is a pure subcomplex of $Z_m(Y_\bullet)$: As above, one needs to show that the inclusion $0 \to Z_m(X'_\bullet) \to Z_m(Y_\bullet)$ is a $\kappa$-pure monomorphism in $\Ch(R)$. This follows by noticing that 
\[
\Hom(S^m(W),-) \cong \Hom(W,Z_m(-))
\] 
preserves $\kappa$-directed colimits. 

\item $X'_\bullet$ is exact: First, since the containment $X'_\bullet \subseteq Y_\bullet$ is $\kappa$-pure, note by (2) above that the sequence 
\[
0 \to Z_m(X'_\bullet) \to Z_m(Y_\bullet) \to Z_m(Y_\bullet / X'_\bullet) \to 0
\] 
is $\kappa$-pure for every $m \in \mathbb{Z}$. In particular, since the category $\Ch(R)$ has a generating family of $\kappa$-presentable objects, it follows that the previous sequence is exact. Thus, we can consider the following commutative diagram with pure exact rows:
\[
\begin{tikzpicture}[description/.style={fill=white,inner sep=2pt}] 
\matrix (m) [ampersand replacement=\&, matrix of math nodes, row sep=3em, column sep=2.5em, text height=1.25ex, text depth=0.25ex] 
{ 
0 \& Z_m(X'_\bullet) \& Z_m(Y_\bullet) \& Z_m(Y_\bullet / X'_\bullet) \& 0 \\
0 \& X'_m \& Y_m \& Y_m / X'_m \& 0 \\
}; 
\path[->] 
(m-1-1) edge (m-1-2) (m-1-2) edge (m-1-3) (m-1-3) edge (m-1-4) (m-1-4) edge (m-1-5)
(m-2-1) edge (m-2-2) (m-2-2) edge (m-2-3) (m-2-3) edge (m-2-4) (m-2-4) edge (m-2-5)
; 
\path[right hook->]
(m-1-2) edge (m-2-2) (m-1-3) edge (m-2-3) (m-1-4) edge (m-2-4)
;
\end{tikzpicture} 
\]
The Snake Lemma applied to the previous diagram yields a short exact sequence 
\[
0 \to B_{m-1}(X'_\bullet) \to B_{m-1}(Y_\bullet) \to B_{m-1}(Y_\bullet / X'_\bullet) \to 0
\] 
which can be embedded into the pure exact sequence 
\[
0 \to Z_{m-1}(X'_\bullet) \to Z_{m-1}(Y_\bullet) \to Z_{m-1}(Y_\bullet / X'_\bullet) \to 0
\]
as in the following commutative diagram:
\[
\begin{tikzpicture}[description/.style={fill=white,inner sep=2pt}] 
\matrix (m) [matrix of math nodes, row sep=3em, column sep=2.5em, text height=1.25ex, text depth=0.25ex] 
{ 
0 & B_{m-1}(X'_\bullet) & B_{m-1}(Y_\bullet) & B_{m-1}(Y_\bullet / X'_\bullet) & 0 \\
0 & Z_{m-1}(X'_\bullet) & Z_{m-1}(Y_\bullet) & Z_{m-1}(Y_\bullet / X'_\bullet) & 0 \\
}; 
\path[->] 
(m-1-1) edge (m-1-2) (m-1-2) edge (m-1-3) (m-1-3) edge (m-1-4) (m-1-4) edge (m-1-5)
(m-2-1) edge (m-2-2) (m-2-2) edge (m-2-3) (m-2-3) edge (m-2-4) (m-2-4) edge (m-2-5)
; 
\path[right hook->]
(m-1-2) edge (m-2-2) (m-1-4) edge (m-2-4)
;
\path[-,font=\scriptsize]
(m-1-3) edge [double, thick, double distance=2pt] node[above,sloped] {\footnotesize$\sim$} (m-2-3)
;
\end{tikzpicture} 
\]
Applying the Snake Lemma again shows that the left-hand side arrow is an isomorphism. 
\end{enumerate}
\end{proof}

\begin{proposition}\label{prop:GFB_properties_Ch}
The classes $\mathscr{GF}_{\mathscr{B}}(R)$ and $\mathscr{GF}^{\otimes^.}_{\mathscr{B}}(R)$ of Gorenstein $\mathscr{B}$-flat complexes and Gorenstein $\mathscr{B}$-flat complexes under $\otimes^.$ are precovering Kaplansky classes for every class $\mathscr{B} \subseteq \Ch(R\op)$. 
\end{proposition}

\begin{proof}
The proof is similar to the one appearing in Proposition \ref{Kaplansky}. We just point out some details concerning some arguments in the category $\Ch(\Ch(R))$ of chain complexes of complexes. First, it is not hard to check that the $\otimes^.$-versions of Lemmas \ref{lem:Banff1} and \ref{lem:Banff2} are also valid. Moreover, for the $\otimes^.$-version of Lemma \ref{lem:Banff1}, it is not required that $Z_m(X'_\bullet) \subseteq Z_m(X_\bullet)$ is a pure containment for every $m \in \mathbb{Z}$. Applying  Lemma \ref{lem:Banff2} and its $\otimes^.$-version, we get that ${}_{\mathscr{B}} \widetilde{\mathscr{F}}$ and ${}_{\mathscr{B}} \widetilde{\mathscr{F}}^{\otimes^.}$ (the class of exact and $(\mathscr{B} \otimes^. -)$-acyclic complexes of $\otimes^.$-flat complexes) are Kaplansky classes. As in the proof of  Proposition \ref{Kaplansky}, we can argue that ${}_{\mathscr{B}} \widetilde{\mathscr{F}}$ and ${}_{\mathscr{B}} \widetilde{\mathscr{F}}^{\otimes^.}$ are the left halves of two perfect cotorsion pairs in $\Ch(\Ch(R))$. Again, as in \cite[Theorem A]{YangLiang}, we can conclude that $\mathscr{GF}_{\mathscr{B}}(R)$ and $\mathscr{GF}^{\otimes^.}_{\mathscr{B}}(R)$ are precovering Kaplansky classes.
\end{proof}

We continue this section constructing several approximations by the classes $\mathcal{GF}_{\mathcal{B}}(R)$, $\mathscr{GF}_{\mathscr{B}}(R)$ and $\mathscr{GF}^{\otimes^.}_{\mathscr{B}}(R)$ in the case they are closed under extensions. In particular, we shall have approximations by the first two classes if $\mathcal{B}$ and $\mathscr{B}$ are semi-definable.

\begin{proposition}\label{prop:GFB_properties}
The following assertions hold in case $\mathcal{GF}_{\mathcal{B}}(R)$ is closed under extensions:
\begin{enumerate}
\item $\mathcal{GF}_{\mathcal{B}}(R)$ is resolving and closed under direct limits. As a consequence, $\mathcal{GF}_{\mathcal{B}}(R)$ is a covering class.

\item If in addition, $\mathcal{GF}_{\mathcal{B}}(R)$ is closed under direct products, then $\mathcal{GF}_{\mathcal{B}}(R)$ is preenveloping.\footnote{Properties (1) and (2) also hold for the classes $\mathscr{GF}_{\mathscr{B}}(R)$ and $\mathscr{GF}_{\mathscr{B}}^{\otimes^.}(R)$, provided they are closed under extensions.} 
\end{enumerate}
\end{proposition}

\begin{proof} \
\begin{enumerate}
\item For the module case, the proof that $\mathcal{GF}_{\mathcal B}(R)$ is closed under taking kernels of epimorphisms follows from a similar argument to that in \cite[Lemma 4.7]{BEI17} with the class of Gorenstein $\mathcal{B}$-flat modules replacing that of Gorenstein AC-flat modules. The proof of closure under direct limits uses Lemma \ref{characterisations}  and the same argument as in \cite[Lemma 3.1]{gang:12:gorflat} with the class $\mathcal{B}$ replacing the class of injectives.

For the proof concerning the classes $\mathscr{GF}_{\mathscr{B}}(R)$ and $\mathscr{GF}_{\mathscr{B}}^{\otimes^.}(R)$ we can use the same argument as in \cite[Lemma 3.1]{gang:12:gorflat}, applying Lemma \ref{characterisations}, with $\mathscr{B}$ replacing the injective complexes.

\item By Proposition \ref{Kaplansky}, the class $\mathcal{GF}_{\mathcal{B}}(R)$ is Kaplansky. Since it is closed under extensions, we get from part (1) that it is closed under direct limits. Therefore the result follows from Enochs and L\'opez Ramos \cite[Theorem 2.5]{EEnJLR02}. 

In the category of complexes, $\mathscr{GF}_{\mathscr{B}}(R)$ and $\mathscr{GF}_{\mathscr{B}}^{\otimes^.}(R)$ are Kaplansky classes assumed to be closed under extensions, and so they are also closed under direct limits by part (1). Then by \v St'ov\'\i \v cek's \cite[Corollary 2.7]{Stovicek}, $\mathscr{GF}_{\mathscr{B}}(R)$ and $\mathscr{GF}_{\mathscr{B}}^{\otimes^.}(R)$ are deconstructible classes. Hence, again by \cite[Theorem on pg. 2]{Stovicek}, we can conclude that $\mathscr{GF}_{\mathscr{B}}(R)$ and $\mathscr{GF}_{\mathscr{B}}^{\otimes^.}(R)$ are preenveloping.
\end{enumerate}
\end{proof}

We shall denote by $\mathcal{GC}_{\mathcal B}(R)$ the right orthogonal class $(\mathcal{GF}_{\mathcal B}(R))^\perp$. We call a module in $\mathcal{GC}_{\mathcal B}(R)$ a \emph{Gorenstein $\mathcal B$-cotorsion module}. Similarly, we set the notations $\mathscr{GC}_{\mathscr{B}}(R) := (\mathscr{GF}_{\mathscr{B}}(R))^\perp$ and $\mathscr{GC}^{\otimes^.}_{\mathscr{B}}(R) := (\mathscr{GF}_{\mathscr{B}}^{\otimes^.}(R))^\perp$ for \emph{Gorenstein $\mathscr{B}$-cotorsion complexes} and \emph{Gorenstein $\mathscr{B}$-cotorsion complexes under $\otimes^.$}, respectively.

\begin{corollary}\label{complete}
If $\mathcal{GF}_{\mathcal{B}}(R)$ is closed under extensions, then the pair 
\[
(\mathcal{GF}_{\mathcal B}(R),\mathcal{GC}_{\mathcal{B}}(R))
\] 
is a hereditary complete cotorsion pair in $\Mod(R)$ (cogenerated by a set).\footnote{Similarly, there are two hereditary complete cotorsion pairs $(\mathscr{GF}_{\mathscr{B}}(R), \mathscr{GC}_{\mathscr{B}}(R))$ and $(\mathscr{GF}^{\otimes^.}_{\mathscr{B}}(R),\mathscr{GC}_{\mathscr{B}}^{\otimes^.}(R))$, in the cases where $\mathscr{GF}_{\mathscr{B}}(R)$ and $\mathscr{GF}_{\mathscr{B}}^{\otimes^.}(R)$ are closed under extensions.} 
\end{corollary}

\begin{proof}
By Proposition \ref{Kaplansky} the class $\mathcal{GF}_{\mathcal B}(R)$ is a Kaplanksy class. By part (1) of Proposition \ref{prop:GFB_properties}, it is also closed under direct limits. Therefore, since it is closed under extensions, a standard argument on deconstruction and transfinite induction yields that the cotorsion pair $(\mathcal{GF}_{\mathcal B}(R),\mathcal{GC}_{\mathcal{B}}(R))$ is cogenerated by a set.

In the setting of chain complexes let us use a different argument. We have by the chain complex version of Proposition \ref{prop:GFB_properties} that $\mathscr{GF}_{\mathscr{B}}(R)$ and $\mathscr{GF}_{\mathscr{B}}^{\otimes^.}(R)$ are covering classes which are also resolving. Since $\Ch(R)$ has enough projectives, we can note that covers by $\mathscr{GF}_{\mathscr{B}}(R)$ and $\mathscr{GF}_{\mathscr{B}}^{\otimes^.}(R)$ are epic. Using again the assumption that $\mathscr{GF}_{\mathscr{B}}(R)$ is closed under extensions, along with Wakamatsu Lemma, we have that $\mathscr{GF}_{\mathscr{B}}(R)$ and $\mathscr{GF}_{\mathscr{B}}^{\otimes^.}(R)$ are special precovering. For a proof of this lemma that works in any abelian category, see \cite[Lemma 2.1.13]{GT}. The rest of the proof is straightforward. 
\end{proof}

Let us present some examples of relative Gorenstein flat cotorsion pairs derived from the previous corollary.

\begin{example}\label{ex:Gorenstein_AC} \
\begin{enumerate}
\item We obtain the already known result \cite[Corollary 3.11]{SarochStovicek} that the class of Gorenstein flat modules is the left half of a hereditary complete cotorsion pair. 

\item Recall that a module $M \in \Mod(R)$ is \emph{level} if $\Tor^R_1(F,M) = 0$ for every $F \in \mathcal{FP}_\infty(R^{\rm op})$. Let us denote by $\mathcal{L}(R)$ the class of level modules. 

Consider the class $\mathcal{GF}_{\rm AC}(R)$ of Gorenstein AC-flat modules from Example \ref{ex:GFB_cases}. The class $\mathcal{AC}(R\op)$ of absolutely clean right $R$-modules turns out to be semi-definable. Indeed, we have by \cite[Proposition 2.7]{BGH} that $\langle \mathcal{AC}(R\op) \rangle = \mathcal{AC}(R\op)$, and the duality pair $({\rm Prod}(\langle \mathcal{AC}(R\op) \rangle^+)^{\rm p},\mathcal{AC}(R\op))$ from Theorem~\ref{theo:definable_duality_pair} coincides with the duality pair $(\mathcal{L}(R),\mathcal{AC}(R\op))$ constructed in \cite{BGH}. For, let us check that ${\rm Prod}(\langle \mathcal{AC}(R\op) \rangle^+)^{\rm p} = \mathcal{L}(R)$. First, note that any $A \in {\rm Prod}(\langle \mathcal{AC}(R\op) \rangle^+)^{\rm p}$ is a pure submodule of a module $A' = \prod_{i \in I} E^+_i$ with $E_i \in \mathcal{AC}(R^{\rm op})$. Since $E^+_i \in \mathcal{L}(R)$ for every $i \in I$, and $\mathcal{L}(R)$ is definable, we have that $A \in \mathcal{L}(R)$. On the other hand, for every level module $L \in \mathcal{L}(R)$ we have that $L^{++} \in {\rm Prod}(\langle \mathcal{AC}(R\op) \rangle^+)^{\rm p}$ since $L^+ \in \mathcal{AC}(R^{\rm op})$. Then $L \in {\rm Prod}(\langle \mathcal{AC}(R\op) \rangle^+)^{\rm p}$ by Lemma~\ref{lem:Prest}. 

Hence, we have the following properties for Gorenstein AC-flat modules:
\begin{itemize}
\item We have a hereditary complete cotorsion pair 
\[
(\mathcal{GF}_{\rm AC}(R),(\mathcal{GF}_{\rm AC}(R))^\perp).
\]
\end{itemize}

Recall from \cite[Definition 4.2]{BEI17} that a module $M \in \Mod(R)$ is \emph{strongly Gorenstein AC-flat} if $M$ is a cycle of an exact and $(\mathcal{AC}(R\op) \otimes_R -)$-acyclic complex of the form 
\[
\cdots \to F \to F \to F \to \cdots
\] 
where $F \in \mathcal{F}(R)$.

\begin{itemize}
\item \cite[Theorem 4.9]{BEI17}: For any ring $R$, an $R$-module $M$ is Gorenstein AC-flat if, and only if, it is a direct summand of a strongly Gorenstein AC-flat module.  

\item \cite[Proposition 4.6]{BEI17} or part (1) of Proposition \ref{prop:GFB_properties}: Every module has a Gorenstein AC-flat cover.
\end{itemize}

\item Let us consider an intermediate situation arising when studying the class $\mathcal{I}_n(R\op)$ of \emph{$\text{FP}_n$-injective} right $R$-modules defined in \cite[Definition 3.1]{BP17}. This class is the right orthogonal complement $(\mathcal{FP}_n(R^{\rm op}))^\perp$, where $\mathcal{FP}_n(R^{\rm op})$ denotes the class of right $R$-modules \emph{of type $\text{FP}_n$}, that is, those $N \in \Mod(R^{\rm op})$ for which there is an exact sequence
\[
P_n \to P_{n-1} \to \cdots \to P_1 \to P_0 \to N \to 0
\]
where $P_k$ is finitely generated and projective for every $0 \leq k \leq n$. 

Let us say that a module is \emph{Gorenstein $\text{FP}_n$-flat} if it is Gorenstein $\mathcal{I}_n(R\op)$-flat. By \cite[Proposition 3.10]{BP17}, we have that $\mathcal{I}_n(R\op)$ is a definable class if $n > 1$. Thus, if $\mathcal{GF}_{\textrm{FP}_n}(R)$ denotes the class of Gorenstein $\text{FP}_n$-flat modules, we have that $\mathcal{GF}_{\textrm{FP}_n}(R)$ is closed under extensions. As a consequence of the previous results, we have that 
\[
(\mathcal{GF}_{\textrm{FP}_n}(R),(\mathcal{GF}_{\textrm{FP}_n}(R))^\perp)
\] 
is a hereditary perfect cotorsion pair. 

The case $n = 1$, on the other hand, is already covered in (1) above, since Gorenstein flat modules coincide with Gorenstein $\mathcal{I}_1(R\op)$-modules by \cite[Lemma 5.3]{EG15}.
\end{enumerate}
\end{example}

%%%%%%%%%%%%%%%%%%%%%%%%%%%%%%%%%%%%%
%%%%%%%%%%%%%%%%%%%%%%%%%%%%%%%%%%%%%

\subsection*{Complexes of Gorenstein $\mathcal B$-flat modules}

We conclude this section exploring the relation between relative Gorenstein flat complexes and relative Gorenstein flat modules. Given a class of right $R$-modules $\mathcal{B}$, we can consider the class $\Ch(\mathcal{GF}_{\mathcal B}(R))$ of complexes of Gorenstein $\mathcal{B}$-flat modules. In light of the previous section, it makes sense to wonder whether or not $\Ch(\mathcal{GF}_{\mathcal B}(R))$ agrees either with the class of Gorenstein $\mathscr{D}$-flat complexes or Gorenstein $\mathscr{D}$-flat complexes under $\otimes^.$, for a suitable class of complexes $\mathscr{D} \subseteq \Ch(R\op)$. For the first possibility, we can establish the following general relation between $\Ch(\mathcal{GF}_{\mathcal B}(R))$ and $\mathscr{GF}_{\Ch(\mathcal{B})}(R)$.

\begin{lemma}\label{relation.complexes}
Let $\mathcal{B}$ be a class of right $R$-modules, and consider the class $\Ch(\mathcal{B})$ of complexes of modules from $\mathcal{B}$. Then, every Gorenstein $\Ch(\mathcal{B})$-flat complex is a complex of Gorenstein $\mathcal{B}$-flat modules.
\end{lemma}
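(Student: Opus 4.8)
The plan is to start from a witnessing resolution for the Gorenstein $\Ch(\mathcal{B})$-flat complex $X$ and then read off, degree by degree, that each module $X_n$ is Gorenstein $\mathcal{B}$-flat by producing a witnessing resolution for $X_n$ in $\Mod(R)$. So suppose $X \in \mathscr{GF}_{\Ch(\mathcal{B})}(R)$, and let $F_\bullet \in \Ch(\Ch(R))$ be an exact complex of flat complexes with $X = Z_0(F_\bullet)$ and $F_\bullet$ being $(\Ch(\mathcal{B}) \otimes -)$-acyclic. The first step is to fix $n \in \mathbb{Z}$ and pass to the $n$-th ``row'': writing $(F_\bullet)_k$ for the flat complex sitting in homological degree $k$ of $F_\bullet$, I form the complex of modules $(F_\bullet)_{\ast,n} := ((F_\bullet)_k)_n)_{k \in \mathbb{Z}}$ obtained by taking the $n$-th module of each flat complex $(F_\bullet)_k$. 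Since each $(F_\bullet)_k$ is a flat complex, each $((F_\bullet)_k)_n$ is a flat module (it is a pure submodule of a flat module, being a term of a flat complex; or one invokes \cite[Theorem 4.1.3]{jrgr} together with the fact that terms of flat complexes are flat). Exactness of $F_\bullet$ as a complex of complexes is checked degreewise in $\Ch(R)$, hence the row $(F_\bullet)_{\ast,n}$ is an exact complex of flat modules, and its $0$-cycle is $(Z_0(F_\bullet))_n = X_n$ because taking cycles and taking the $n$-th term commute (the degree-$n$ part of $Z_0(F_\bullet)$ is computed using only the degree-$n$ parts of the relevant maps).

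The second, and central, step is to verify that the row $(F_\bullet)_{\ast,n}$ is $(\mathcal{B} \otimes_R -)$-acyclic, i.e.\ that $B \otimes_R (F_\bullet)_{\ast,n}$ is an exact complex of abelian groups for every $B \in \mathcal{B}$. Here I would feed into the $(\Ch(\mathcal{B}) \otimes -)$-acyclicity hypothesis the cleverly chosen test complex: for $B \in \mathcal{B}$, apply it to the sphere $S^{-n}(B) \in \Ch(\mathcal{B})$ (the complex with $B$ concentrated in degree $-n$). The key computation is to unwind the definition of the modified tensor product $W \otimes Y$ recalled in the preliminaries: for a sphere $W = S^{-n}(B)$, one has $(S^{-n}(B) \otimes Y)_m \cong B \otimes_R Y_{m+n}$ naturally, with the differential induced by $\partial^Y$. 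Granting this, $(\Ch(\mathcal{B}) \otimes -)$-acyclicity of $F_\bullet$ applied to $S^{-n}(B)$ says precisely that $S^{-n}(B) \otimes F_\bullet$ is exact; but $S^{-n}(B) \otimes F_\bullet$, computed in $\Ch(\Ch(\mathbb{Z}))$, has its $(\ast, m)$-row equal to $B \otimes_R (F_\bullet)_{\ast,m+n}$, and exactness of a complex of complexes is degreewise, so each row $B \otimes_R (F_\bullet)_{\ast,m+n}$ is exact. Taking the appropriate row gives exactness of $B \otimes_R (F_\bullet)_{\ast,n}$, as desired. Combining the two steps, $(F_\bullet)_{\ast,n}$ is an exact, $(\mathcal{B} \otimes_R -)$-acyclic complex of flat modules with $0$-cycle $X_n$, so $X_n \in \mathcal{GF}_{\mathcal{B}}(R)$ by Definition~\ref{def:relGflat}. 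Since $n$ was arbitrary, $X$ is a complex of Gorenstein $\mathcal{B}$-flat modules.

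The main obstacle is the bookkeeping around the modified tensor product $\otimes$ versus the usual graded tensor $\otimes^{.}$, and in particular checking the natural isomorphism $(S^{-n}(B) \otimes Y)_m \cong B \otimes_R Y_{m+n}$ with the correct identification of differentials and signs; one has to be careful that the quotient by $B_m(X \otimes^. Y)$ in the definition of $\otimes$ does not destroy this identification when one of the factors is a sphere (it does not, precisely because a sphere has zero differential, so the relevant boundary terms vanish). A secondary, purely routine point is confirming that terms of flat complexes are flat modules and that $Z_0$ commutes with the ``take the $n$-th term'' functor; both are standard and can be dispatched with a sentence each. I do not expect to need the semi-definability hypothesis anywhere here, consistent with the statement of the lemma being for an arbitrary class $\mathcal{B}$.
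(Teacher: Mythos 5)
Your overall strategy is the same as the paper's: pass to the degree-$n$ row $(F_\bullet)_{\ast,n}$, check it is an exact complex of flat modules with $0$-cycle $X_n$, and then verify the acyclicity condition against $\mathcal{B}$ by feeding a well-chosen test complex in $\Ch(\mathcal{B})$ into the hypothesis on $F_\bullet$. The first step is fine. The problem is the test complex: you use a sphere $S^{-n}(B)$, while the paper uses a disk $D^m(B)$, and for the \emph{modified} tensor product $\otimes$ the sphere computation you rely on is false.

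Concretely, you claim $(S^{-n}(B) \otimes Y)_m \cong B \otimes_R Y_{m+n}$, arguing that the quotient by $B_m(S^{-n}(B) \otimes^{.} Y)$ in the definition of $\otimes$ is trivial ``because a sphere has zero differential.'' But the differential of $X \otimes^{.} Y$ is $x \otimes y \mapsto \partial^X(x) \otimes y + (-1)^{|x|}\, x \otimes \partial^Y(y)$; only the $\partial^X$-term vanishes when $X = S^{-n}(B)$, while the $\partial^Y$-term survives. Hence $B_m(S^{-n}(B) \otimes^{.} Y) = \operatorname{Im}\bigl(1_B \otimes \partial^Y_{m+n+1}\bigr)$, and
\[
\bigl(S^{-n}(B) \otimes Y\bigr)_m \;=\; \frac{B \otimes_R Y_{m+n}}{\operatorname{Im}\bigl(1_B \otimes \partial^Y_{m+n+1}\bigr)},
\]
which is a proper quotient of $B \otimes_R Y_{m+n}$ in general. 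Consequently, the exactness of $S^{-n}(B) \otimes F_\bullet$ tells you that certain quotients of the groups $B \otimes_R F_{k,n}$ assemble into an exact sequence, but this does \emph{not} yield the exactness of $B \otimes_R (F_\bullet)_{\ast,n}$ itself, which is what you need. Spheres are the ``free'' objects for the graded tensor $\otimes^{.}$ (the paper indeed uses them in the later $\otimes^{.}$-version of this statement), whereas disks play that role for $\otimes$. For a disk one checks that $\bigl(D^m(B) \otimes Y\bigr)_j \cong B \otimes_R Y_{j-m}$ with differential $\pm\, 1_B \otimes \partial^Y$, because the disk's identity map allows one to cancel the unwanted summand against the $\partial^Y$-boundary. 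Replacing $S^{-n}(B)$ by $D^m(B)$ in your second step, and then reading off the appropriate row of $D^m(B) \otimes F_\bullet$, recovers the paper's argument and closes the gap.
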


\begin{proof}
Let $X$ be a Gorenstein $\Ch(\mathcal{B})$-flat complex. Then, there exists an exact sequence of flat complexes 
\[
F_\bullet \colon \cdots \rightarrow F_1 \rightarrow F_0 \rightarrow F_{-1} \rightarrow \cdots
\]
such that $X = \Ker (F_0 \rightarrow F_{-1})$ and $D \otimes F_\bullet$ is exact for any complex $D \in \Ch(\mathcal{B})$. For any $m \in \mathbb{Z}$ note that we have an exact sequence of flat modules 
\[
(F_\bullet)_m \colon \cdots \rightarrow F_{1,m} \rightarrow F_{0,m} \rightarrow F_{-1,m} \rightarrow \cdots
\] 
such that $X_m = \Ker (F_{0,m} \rightarrow F_{-1,m})$. For any right $R$-module $B \in \mathcal{B}$, we have a natural isomorphism $B \otimes_R (F_\bullet)_m \simeq D^m(B) \otimes F_\bullet$, where $D^m(B) \otimes F_\bullet$ is exact, and then so is $B \otimes_R (F_\bullet)_m$. Thus each $X_m$ is a Gorenstein $\mathcal{B}$-flat module.
\end{proof}

The following proposition is based on a result by Yang and Liu \cite[Corollary 3.12]{gang:12:gorflatGF}.

\begin{proposition}\label{dwGBflat.complexes}
Let $\mathcal B$ be a class of right $R$-modules and assume that $\mathcal{GF}_{{\mathcal B}}(R)$ is closed under extensions. We have the equality
\[
\mathscr{GF}_{\widehat{\mathcal{B}}}(R) = \Ch(\mathcal{GF}_{{\mathcal{B}}}(R))
\] 
where
\[
\widehat{\mathcal{B}} := \left\{ X \in \Ch(R\op) \mbox{ {\rm :} } X \simeq \bigoplus_{m \in \mathbb{Z}} D^m(B_m),\ B_m\in \mathcal{B} \right\}.
\]
In particular, if $\mathcal{B}$ is the class of injective right $R$-modules, we recover  \cite[Corollary 3.12]{gang:12:gorflatGF}.
\end{proposition}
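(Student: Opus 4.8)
The plan is to prove the two inclusions separately, the first being essentially the argument of Lemma~\ref{relation.complexes} and the second an explicit construction. For $\mathscr{GF}_{\widehat{\mathcal{B}}}(R)\subseteq\Ch(\mathcal{GF}_{\mathcal{B}}(R))$, write $X=Z_0(F_\bullet)$ with $F_\bullet\in\Ch(\Ch(R))$ an exact, $(\widehat{\mathcal{B}}\otimes-)$-acyclic complex of flat complexes. As in the proof of Lemma~\ref{relation.complexes}, for each $m\in\mathbb{Z}$ the complex $(F_\bullet)_m$ of $m$-th components is an exact complex of flat modules with $Z_0((F_\bullet)_m)=X_m$, and $B\otimes_R(F_\bullet)_m\cong D^m(B)\otimes F_\bullet$ naturally for $B\in\mathcal{B}$. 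The one new point is that $(\widehat{\mathcal{B}}\otimes-)$-acyclicity already forces $D^m(B)\otimes F_\bullet$ to be exact for all $B\in\mathcal{B}$ and $m\in\mathbb{Z}$: indeed $\bigoplus_{k\in\mathbb{Z}}D^k(B)\in\widehat{\mathcal{B}}$, and since $-\otimes F_\bullet$ and homology commute with coproducts, exactness of $\bigl(\bigoplus_k D^k(B)\bigr)\otimes F_\bullet=\bigoplus_k\bigl(D^k(B)\otimes F_\bullet\bigr)$ yields exactness of each summand. Hence each $(F_\bullet)_m$ is $(\mathcal{B}\otimes_R-)$-acyclic, so $X_m\in\mathcal{GF}_{\mathcal{B}}(R)$.

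For the reverse inclusion, fix $X$ with $X_m\in\mathcal{GF}_{\mathcal{B}}(R)$ for every $m$. It suffices to construct an exact complex $F_\bullet\in\Ch(\Ch(R))$ of flat complexes with $Z_0(F_\bullet)=X$ such that every $(F_\bullet)_m$ is a $(\mathcal{B}\otimes_R-)$-acyclic exact complex of flat modules. Indeed, in that case $D^m(B)\otimes F_\bullet\cong B\otimes_R(F_\bullet)_m$ is exact for all $m$ and $B\in\mathcal{B}$, so $\bigl(\bigoplus_m D^m(B_m)\bigr)\otimes F_\bullet=\bigoplus_m\bigl(D^m(B_m)\otimes F_\bullet\bigr)$ is exact for every $\bigoplus_m D^m(B_m)\in\widehat{\mathcal{B}}$, whence $X=Z_0(F_\bullet)\in\mathscr{GF}_{\widehat{\mathcal{B}}}(R)$.

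To build $F_\bullet$ I would first produce a coresolution $0\to X\to F^0\to F^1\to\cdots$ by flat complexes. For each $m$, choose by Lemma~\ref{characterisations}(c) a short exact sequence $0\to X_m\xrightarrow{\alpha_m}\overline{F}_m\to\overline{G}_m\to0$ with $\overline{F}_m$ flat and $\overline{G}_m\in\mathcal{GF}_{\mathcal{B}}(R)$, and set $F^0:=\bigoplus_{m\in\mathbb{Z}}D^{m+1}(\overline{F}_m)$; this is a flat complex (a coproduct of disks on flat modules, hence exact with flat cycles), and it receives $X$ through the canonical embedding $X\hookrightarrow\bigoplus_m D^{m+1}(X_m)$ of $X$ into a contractible complex, composed with $\bigoplus_m D^{m+1}(\alpha_m)$. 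The cokernel $Q$ of $X\hookrightarrow F^0$ then fits into a short exact sequence of complexes whose outer terms are a reindexing of $X$ and the complex $\bigoplus_m D^{m+1}(\overline{G}_m)$, so each $Q_n$ is an extension of modules from $\mathcal{GF}_{\mathcal{B}}(R)$ and therefore lies in $\mathcal{GF}_{\mathcal{B}}(R)$ by hypothesis. Iterating on $Q$ yields the coresolution, whose degreewise $(\mathcal{B}\otimes_R-)$-acyclicity is automatic from the vanishing of $\Tor^R_i(B,-)$ on $\mathcal{GF}_{\mathcal{B}}(R)$ (Lemma~\ref{characterisations}(b)). Symmetrically, a resolution $\cdots\to F_1\to F_0\to X\to0$ is obtained by repeatedly taking epimorphisms onto the successive kernels from flat complexes ($\Ch(R)$ has enough flat complexes — e.g. every complex is covered by a coproduct of disks on flat modules); the kernels again have all terms in $\mathcal{GF}_{\mathcal{B}}(R)$ because $\mathcal{GF}_{\mathcal{B}}(R)$ is closed under kernels of epimorphisms (Corollary~\ref{directlimits}), and the same $\Tor$-vanishing supplies the acyclicity. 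Splicing the coresolution and the resolution at $X$ gives the required $F_\bullet$.

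The main obstacle is this reverse inclusion — concretely, choosing the termwise flat coresolutions of the $X_m$ coherently enough to assemble an honest exact complex of flat \emph{complexes}. The device $F^0=\bigoplus_m D^{m+1}(\overline{F}_m)$ takes care of this, and the hypothesis that $\mathcal{GF}_{\mathcal{B}}(R)$ is closed under extensions is precisely what keeps the successive cokernels inside $\Ch(\mathcal{GF}_{\mathcal{B}}(R))$. The remaining verifications — exactness of $F_\bullet$, the $(\mathcal{B}\otimes_R-)$-acyclicity of each $(F_\bullet)_m$, and the coproduct bookkeeping for $\widehat{\mathcal{B}}$ — are routine. Finally, taking $\mathcal{B}=\mathcal{I}(R\op)$ recovers \cite[Corollary 3.12]{gang:12:gorflatGF}, now without any coherence assumption, since $\mathcal{GF}(R)$ is closed under extensions by \cite{SarochStovicek}.
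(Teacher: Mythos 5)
Your proof is correct and, at bottom, follows the same route the paper takes (the paper delegates the forward inclusion to Lemma~\ref{relation.complexes} and the reverse to the argument of \cite[Lemma 3.9]{gang:12:gorflatGF} via Lemmas~\ref{lemma18} and \ref{lemma19}). What you do differently is make everything self-contained. For $(\subseteq)$ you correctly observe that Lemma~\ref{relation.complexes} as stated concerns $\Ch(\mathcal{B})$ rather than $\widehat{\mathcal{B}}$, and you patch this by noting that $(\widehat{\mathcal{B}}\otimes-)$-acyclicity already forces $D^m(B)\otimes F_\bullet$ to be exact via the coproduct trick; this is exactly what the \emph{proof} of Lemma~\ref{relation.complexes} needs, so the citation works, and your explicit justification is a mild improvement in rigor. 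For $(\supseteq)$ your construction of $F^0=\bigoplus_m D^{m+1}(\overline{F}_m)$ and the embedding of $X$ through the contractible complex $\bigoplus_m D^{m+1}(X_m)$ is precisely the device used in the paper's later proof (for the $\otimes^.$-analogue) and implicitly in \cite[Lemma 3.9]{gang:12:gorflatGF}. The iteration stays inside $\Ch(\mathcal{GF}_{\mathcal{B}}(R))$ because closure under extensions handles the degreewise extensions in the cokernels, and the resolution side uses closure under kernels of epimorphisms (Corollary~\ref{directlimits}) — both legitimate consequences of the hypothesis. The final acyclicity check via vanishing of $\Tor^R_1(B,-)$ on cycles is sound and replaces the paper's appeal to Lemmas~\ref{lemma18} and \ref{lemma19}. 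In short: same construction, more detail, no genuine divergence and no gap.
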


\begin{proof}
The containment $(\subseteq)$ is Lemma \ref{relation.complexes}. For the converse containment $(\supseteq)$, the same arguments as in \cite[Lemmas 3.4, 3.8 and 3.9]{gang:12:gorflatGF} work, replacing the class of Gorenstein flat modules with that of Gorenstein $\mathcal{B}$-flat modules.
\end{proof}

There are classes of relative Gorenstein flat complexes that cannot be written as a class of complexes of relative Gorenstein flat modules, as we show below.

\begin{remark}\label{rem:strict_inclusion}
Consider the case where $\mathcal{B}$ is the class $\mathcal{AC}(R\op)$ of absolutely clean right $R$-modules in the previous proposition. We shall see that the class $\mathscr{GF}_{\rm AC}(R)$ of Gorenstein AC-flat complexes (see Example \ref{ex:GFB_cases} (4)) is not necessarily the class of complexes of Gorenstein AC-flat modules.

Recall from \cite{BravoGillespie} that a complex is absolutely clean if it is exact and each of its cycles is an absolutely clean module. Thus, one can note that the class $\mathscr{AC}(R\op)$ of absolutely clean complexes is not necessarily the class $\widehat{\mathcal{AC}(R\op)}$. In fact, we have that the following are equivalent:
\begin{itemize}
\item[(a)] $\mathscr{AC}(R\op) = \widehat{\mathcal{AC}(R\op)}$.

\item[(b)] $R$ is a right Noetherian ring.

\item[(c)] Every module in $\mathcal{AC}(R\op)$ is a direct summand of a $\mathcal{FP}_\infty(R)$-filtered module. 
\end{itemize}

For the implication (b) $\Rightarrow$ (a), if $R$ is right Noetherian then $\mathcal{AC}(R\op)$ and $\mathscr{AC}(R\op)$ coincide with the classes $\mathcal{I}(R\op)$ and $\mathscr{I}(R\op)$ of injective modules and injective complexes, by \cite[Proposition 2.1 (1)]{BGH}. 

Now to show (a) $\Rightarrow$ (b), let $E \in \mathcal{AC}(R\op)$ and consider a short exact sequence 
\[
\varepsilon \colon 0 \to E \to I \to E' \to 0
\]
with $I \in \mathcal{I}(R\op)$. Since $\mathcal{AC}(R\op)$ is a coresolving class by \cite[Proposition 2.7 (3)]{BGH}, we have that $E'$ is also absolutely clean. Thus, $\varepsilon$ can be regarded as a complex in $\mathscr{AC}(R\op)$, and since the latter coincides with $\widehat{\mathcal{AC}(R\op)}$, we have that $\varepsilon$ splits. It follows that $E$ is injective, that is, we have proved that $\mathcal{AC}(R\op) = \mathcal{I}(R\op)$. This in turn implies that $R$ is a right Noetherian ring by \cite[Theorem 2.5 and Lemma 5.2]{BP17}.   

Note also that if either (a) or (b) holds, then $\mathcal{AC}(R\op)$ is self-orthogonal, and since the cotorsion pair $({}^\perp(\mathcal{AC}(R\op)),\mathcal{AC}(R\op))$ is cogenerated by a set of representatives of modules of type $\text{FP}_\infty$ (see \cite[Proposition 4.1 and Corollary 4.2]{BP17}), we have that every absolutely clean module in $\Mod(R\op)$ is a direct summand of a module filtered by a set of modules of type $\text{FP}_\infty$ (see \cite[Corollary 3.2.4]{GT}, for instance).

Finally, suppose that any $E \in \mathcal{AC}(R\op)$ satisfies (c). Without loss of generality, we may assume that $E = \varinjlim_{\alpha < \lambda} E_\alpha$ for some ordinal $\lambda > 0$, such that $E_0$ and each quotient $E_{\alpha + 1} / E_\alpha$ (where $E_\alpha \subseteq E_{\alpha + 1}$) are of type $\text{FP}_\infty$. Then, Eklof's Lemma implies that  $\Ext^1_R(E,E') = 0$ for every $E' \in \mathcal{AC}(R\op)$. Hence, the class $\mathcal{AC}(R\op)$ is self-orthogonal, which also implies that $\mathcal{AC}(R\op) = \mathcal{I}(R\op)$, and thus (a). 
\end{remark}

Previously we proved that, in case $\mathcal{GF}_{\mathcal{B}}(R)$ is closed under extensions, then $\Ch(\mathcal{GF}_{\mathcal{B}}(R))$ can be represented as the class $\mathscr{GF}_{\widehat{\mathcal{B}}}(R)$ of Gorenstein $\widehat{\mathcal{B}}$-flat complexes, that is, Gorenstein flat complexes relative to direct sums of disks complexes centred at objects of $\mathcal{B}$. Relative Gorenstein $\otimes^.$-flat complexes provide another description for $\Ch(\mathcal{GF}_{\mathcal{B}}(R))$, by using sphere complexes instead, as specified in the following result.

\begin{proposition}
Let $\mathcal{B}$ be a class of right $R$-modules and assume that $\mathcal{GF}_{\mathcal{B}}(R)$ is closed under extensions. Then, we have the equality
\[
\mathscr{GF}^{\otimes^.}_{\check{\mathcal{B}}}(R) = \Ch(\mathcal{GF}_{\mathcal{B}}(R))
\] 
where
\[
\check{\mathcal{B}} := \left\{ X \in \Ch(R\op) \mbox{ {\rm :} } X \simeq \bigoplus_{m \in \mathbb{Z}} S^m(B_m),\ B_m\in \mathcal{B} \right\}.
\]
\end{proposition}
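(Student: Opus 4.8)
The plan is to mimic the proof of Proposition~\ref{dwGBflat.complexes}, replacing disk complexes by sphere complexes and the modified tensor product $\otimes$ by $\otimes^.$, and exploiting that a complex $L$ is $\otimes^.$-flat precisely when it is a (not necessarily exact) complex of flat modules. For the inclusion $(\subseteq)$, suppose $X \in \mathscr{GF}^{\otimes^.}_{\check{\mathcal{B}}}(R)$, so $X = Z_0(F_\bullet)$ for an exact complex $F_\bullet \in \Ch(\Ch(R))$ of $\otimes^.$-flat complexes with $B \otimes^. F_\bullet$ exact for every $B \in \check{\mathcal{B}}$. Fixing $m \in \mathbb{Z}$, the componentwise complex $(F_\bullet)_m$ is an exact sequence of flat modules (each $F_{k}$ being $\otimes^.$-flat means each $F_{k,m}$ is flat) with $X_m = Z_0((F_\bullet)_m)$. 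For $B \in \mathcal{B}$ one has a natural isomorphism $B \otimes_R (F_\bullet)_m \cong (S^m(B) \otimes^. F_\bullet)_{?}$ — more precisely, using the last bullet in the list preceding Lemma~\ref{lem:characterisationsPseudoGFB}, $(S^m(B) \otimes^. F_\bullet)_n \cong B \otimes_R (F_\bullet)_{n-m}$ at the level of each $F_k$, so $S^m(B) \otimes^. F_\bullet$ being exact forces $B \otimes_R (F_\bullet)_m$ to be exact. Hence each $X_m$ is Gorenstein $\mathcal{B}$-flat, i.e. $X \in \Ch(\mathcal{GF}_{\mathcal{B}}(R))$.

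For the converse $(\supseteq)$, assume $X \in \Ch(\mathcal{GF}_{\mathcal{B}}(R))$. The strategy is to build, step by step, an exact $\otimes^.$-flat coresolution of $X$ that stays $(\check{\mathcal{B}} \otimes^. -)$-acyclic, then splice it with a similarly obtained $\otimes^.$-flat resolution, which is exactly the shape of condition (2) of Lemma~\ref{lem:characterisationsPseudoGFB}. First I would record the two ingredients needed: (i) a Tor-vanishing statement, namely that $\Tor^{\cdot}_1(B,X) = 0$ for every $B \in \check{\mathcal{B}}$ whenever all $X_m$ are Gorenstein $\mathcal{B}$-flat — this is the $\otimes^.$/sphere analogue of Lemma~\ref{lemma18} and reduces via the bullet $(\Tor^{\cdot}_i(S^m(B),F))_n \cong \Tor^R_i(B,F_{n-m})$ to the module-level vanishing $\Tor^R_1(B,X_n) = 0$ from Lemma~\ref{characterisations}(b); and (ii) a pushout/cokernel lemma, the analogue of Lemma~\ref{lemma19}, using that $\mathscr{GF}^{\otimes^.}_{\check{\mathcal{B}}}(R)$ is closed under extensions — but note this closure has to first be established. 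Closure under extensions for $\mathscr{GF}^{\otimes^.}_{\check{\mathcal{B}}}(R)$ should follow because $\check{\mathcal{B}}$ is (up to the passage through sphere complexes) a semi-definable class of complexes, so Theorem~\ref{theo:equivalences_GF_Ch} (in its $\otimes^.$-form, cf.\ Proposition~\ref{prop:properties_GFB_pseudo}(3)) applies; alternatively one can deduce it directly from the hypothesis that $\mathcal{GF}_{\mathcal{B}}(R)$ is closed under extensions together with the componentwise description. With (i) and (ii) in hand, one runs the argument of \cite[Lemma 3.9]{gang:12:gorflatGF} verbatim with $\mathcal{GF}(R)$ replaced by $\mathcal{GF}_{\mathcal{B}}(R)$ and injective complexes replaced by $\check{\mathcal{B}}$: for a complex $X$ with Gorenstein $\mathcal{B}$-flat components, take at each stage a flat preenvelope (which exists since, applying Lemma~\ref{characterisations}(b) componentwise and then Garc\'ia Rozas' characterisation, each $X_m \hookrightarrow F_m$ flat can be assembled into $X \hookrightarrow F$ with $F$ $\otimes^.$-flat and with $(\mathcal{B} \otimes_R -)$-acyclic cokernel componentwise), note the cokernel again has Gorenstein $\mathcal{B}$-flat components, and iterate; dually build the left half using that each $X_m$ sits in $0 \to G_m \to Q_m \to X_m \to 0$ with $Q_m$ flat and $G_m$ Gorenstein $\mathcal{B}$-flat. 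Splicing yields an exact $\otimes^.$-flat complex $F_\bullet$ with $X = Z_0(F_\bullet)$ and $B \otimes^. F_\bullet$ exact for all $B \in \check{\mathcal{B}}$, i.e. $X \in \mathscr{GF}^{\otimes^.}_{\check{\mathcal{B}}}(R)$.

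The main obstacle I anticipate is the careful bookkeeping in the converse inclusion: one must check that the $\otimes^.$-flat (pre)envelopes and (pre)covers can be chosen so that \emph{both} the induced module-level sequences (giving that each component stays Gorenstein $\mathcal{B}$-flat) \emph{and} the $(\check{\mathcal{B}} \otimes^. -)$-acyclicity are preserved at every transfinite stage of the construction. The component-wise behaviour of $\otimes^.$ on disks versus spheres is what makes sphere complexes the correct choice here — $S^m(B) \otimes^. D^n(Q) \cong D^{m+n}(B \otimes_R Q)$ up to a shift bookkeeping — so the acyclicity condition $B \otimes^. F_\bullet$ exact reduces cleanly to module-level statements, but verifying that the derived-functor vanishing $\Tor^{\cdot}_i(B,X) = 0$ for $i \ge 1$ (not just $i=1$) propagates through the coresolution, exactly as in Lemma~\ref{characterisations_Ch}(b), requires the same dimension-shifting argument as in the absolute case and should be stated as a lemma rather than proved inline. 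Everything else is a routine translation of \cite[Lemmas 3.4, 3.8, 3.9]{gang:12:gorflatGF}.
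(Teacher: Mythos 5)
Your inclusion $(\subseteq)$ coincides with the paper's argument: one restricts the sphere-tensored exactness $S^m(B)\otimes^. F_\bullet$ to each degree and identifies the $m$-th component of $F_\bullet$ as an exact, $(\mathcal{B}\otimes_R -)$-acyclic complex of flat modules with $Z_0 = X_m$. The inclusion $(\supseteq)$, however, has a genuine gap. You invoke closure under extensions of $\mathscr{GF}^{\otimes^.}_{\check{\mathcal{B}}}(R)$ in order to run an analogue of Lemma~\ref{lemma19}, but neither of your two justifications holds. The claim that $\check{\mathcal{B}}$ is semi-definable is unsupported and in fact false in general: $\check{\mathcal{B}}$ consists of direct \emph{sums} of sphere complexes, a class that is not closed under direct products, so it cannot be semi-definable. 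Your alternative, deducing closure from the ``componentwise description,'' is circular, since that description is exactly the statement you are trying to prove.

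Beyond being unjustified, this closure is not needed, and that is where your route diverges from the paper's (correct) one. Condition (2) of Lemma~\ref{lem:characterisationsPseudoGFB} requires only a right coresolution $0 \to X \to F^0 \to F^1 \to \cdots$ by $\otimes^.$-flat complexes which is exact and $(\check{\mathcal{B}}\otimes^. -)$-acyclic, together with $\Tor^{\cdot}_i(B,X)=0$ for $i\geq 1$; there is no left half to splice, so the two-sided adaptation of \cite[Lemma 3.9]{gang:12:gorflatGF} is superfluous. The paper builds this coresolution via \cite[Proposition 3.5]{gang:12:gorflatGF}, and the only closure under extensions it uses is at the module level for $\mathcal{GF}_{\mathcal{B}}(R)$, which is the stated hypothesis: at each step one tracks that the cokernel is a \emph{complex of} Gorenstein $\mathcal{B}$-flat modules (via componentwise short exact sequences), not that it is a Gorenstein $\check{\mathcal{B}}$-flat complex under $\otimes^.$. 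The $(\check{\mathcal{B}}\otimes^. -)$-acyclicity of the coresolution then follows from exactly the Tor-vanishing computation you correctly outline, using that $\Tor^{\cdot}_k$ commutes with coproducts and $(\Tor^{\cdot}_k(S^m(B),-))_n \cong \Tor^R_k(B, (-)_{n-m})$. If you reorganise your argument to track only componentwise Gorenstein $\mathcal{B}$-flatness of the cokernels and drop the left-half construction, the proof becomes correct and agrees with the paper's.
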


\begin{proof}
We first prove the inclusion $(\subseteq)$. So let $X \in \mathscr{GF}^{\otimes^.}_{\check{\mathcal{B}}}(R)$, that is, $X = Z_0(F_\bullet)$ where $F_\bullet$ is an exact and $(\check{\mathcal{B}} \otimes^. -)$-acyclic complex of $\otimes^.$-flat complexes. In particular, we have that $S^0(B) \otimes^. F_\bullet$ is an exact complex of complexes of abelian groups, for every $B \in \mathcal{B}$. Let us write
\[
F_\bullet = \cdots \to F_1 \to F_0 \to F_{-1} \to \cdots,
\]
so that 
\[
S^0(B) \otimes^. F_\bullet = \cdots \to S^0(B) \otimes^. F_1 \to S^0(B) \otimes^. F_0 \to S^0(B) \otimes^. F_{-1} \to \cdots,
\]
where 
\[
(S^0(B) \otimes^. F_i)_m = \bigoplus_{k \in \mathbb{Z}} \left( (S^0(B))_k \dson{\otimes}_R F_{i,{m-k}} \right) = B \dson{\otimes}_{R} F_{i, m}.
\]
It follows that we have an exact complex of flat modules
\[
F_{\bullet,m} = \cdots \to F_{1,m} \to F_{0,m} \to F_{-1,m} \to \cdots
\]
which is $(B \otimes_R -)$-acyclic for every $B \in \mathcal{B}$, that is, each 
\[
X_m = {\rm Ker}(F^0 \to F^{-1})_m = Z_0(F_{\bullet,m})
\] 
is a Gorenstein $\mathcal{B}$-flat module. Hence, $X \in \Ch(\mathcal{GF}_{\mathcal{B}}(R))$. 

For the remaining inclusion, consider $X \in \Ch(\mathcal{GF}_{\mathcal{B}}(R))$. Using the same argument as in \cite[Proposition 3.5]{gang:12:gorflatGF} with Gorenstein $\mathcal{B}$-flat modules replacing Gorenstein flat modules, we can construct a flat complex (in particular $\otimes^.$-flat) 
\[
F^0 := \bigoplus_{m \in \mathbb{Z}} D^{m+1}(F_m)
\] 
with $F_m$ flat for every $m \in \mathbb{Z}$, along with a monomorphism $X \rightarrowtail F^0$ such that ${\rm CoKer}(X \rightarrowtail F^0)$ is a complex of Gorenstein $\mathcal{B}$-modules. Repeating this argument infinitely many times, we obtain an exact sequence 
\[
\rho \colon 0 \to X \to F^0 \to F^1 \to \cdots
\]
with cycles $G^i$ in $\Ch(\mathcal{GF}_{\mathcal{B}}(R))$. This sequence $\rho$ is also $(\check{\mathcal{B}} \otimes^. -)$-acyclic, since $\Tor^{\cdot}_k(B,X) = 0$ and $\Tor^{\cdot}_k(B,G^i) = 0$ for every $k, i > 0$ and $B \in \check{\mathcal{B}}$. For, let us write $B \simeq \oplus_{m \in \mathbb{Z}} S^m(B_m)$ with $B_m \in \mathcal{B}$. Then, we have 
\begin{align*}
\Tor^{\cdot}_k(B,X) \cong \bigoplus_{m \in \mathbb{Z}} \Tor^{\cdot}_k(S^m(B_m),X),
\end{align*}
since $\Tor^{\cdot}_k(-,-)$ commutes with coproducts. Also, 
\begin{align*}
(\Tor^{\cdot}_k(S^m(B_m),X))_n \cong \Tor^R_k(B_m,X_{n-m}) = 0
\end{align*}
since $X_{n-m}$ is a Gorenstein $\mathcal{B}$-flat module. Thus, $\Tor^{\cdot}_k(B,X) = 0$, and similarly, $\Tor^{\cdot}_k(B,G^i) = 0$. Hence, $\rho$ is $(\check{\mathcal{B}} \otimes^{\cdot} -)$-acyclic and $\Tor^{\cdot}_k(B,X) = 0$ for every $k > 0$ and $B \in \check{\mathcal{B}}$. Using Lemma \ref{characterisations}, we have that $X$ is Gorenstein $\check{\mathcal{B}}$-flat under $\otimes^.$.   
\end{proof}

%%%%%%%%%%%%%%%%%%%%%%%%%%%%%%%%%%%%%
%%%%%%%%%%%%%%%%%%%%%%%%%%%%%%%%%%%%%
%%%%%%%%%%%%%%%%%%%%%%%%%%%%%%%%%%%%%
%%%%%%%%%%%%%%%%%%%%%%%%%%%%%%%%%%%%%

\section{The $\mathcal{B}$-flat stable module category.}\label{sec:stable}

If we are given two hereditary complete cotorsion pairs $(\mathcal{Q}, \mathcal{R}')$ and $(\mathcal{Q}',\mathcal{R})$ in an abelian category $\mathcal{C}$ such that $\mathcal{Q}' \subseteq \mathcal{Q}$, $\mathcal{R}' \subseteq \mathcal{R}$ and $\mathcal{Q}' \cap \mathcal{R} = \mathcal{Q} \cap \mathcal{R}'$, then there exists by \cite[Main Theorem 1.2]{GillespieTriple} a unique subcategory $\mathcal{W} \subseteq \mathcal{C}$ such that $(\mathcal{Q,W,R})$ is a Hovey triple in $\mathcal{C}$, that is:
\begin{enumerate}
\item $(\mathcal{Q},\mathcal{R} \cap \mathcal{W})$ and $(\mathcal{Q} \cap \mathcal{W}, \mathcal{R})$ are complete cotorsion pairs in $\mathcal{C}$.

\item $\mathcal{W}$ is \emph{thick}, meaning that it is closed under extensions, kernels of epimorphisms and cokernels of monomorphisms between its objects, and also under direct summands. 
\end{enumerate}
Due to Hovey's correspondence \cite[Theorem 2.2]{Hovey}, the existence of such triple $(\mathcal{Q,W,R})$ implies the existence of a unique abelian model structure on $\mathcal{Q}$ such that:
\begin{itemize}
\item A morphism $f$ is a (trivial) cofibration if, and only if, it is monic and ${\rm CoKer}(f) \in \mathcal{Q}$ (resp., ${\rm CoKer}(f) \in \mathcal{Q} \cap \mathcal{W} = \mathcal{Q}'$).

\item A morphism $g$ is a (trivial) fibration if, and only if, it is epic and ${\rm Ker}(g) \in \mathcal{R}$ (resp., ${\rm Ker}(g) \in \mathcal{R} \cap \mathcal{W} = \mathcal{R}'$).
\end{itemize}

%%%%%%%%%%%%%%%%%%%%%%%%%%%%%%%%%%%%%
%%%%%%%%%%%%%%%%%%%%%%%%%%%%%%%%%%%%%

\subsection*{Compatibility relations between flat and relative Gorenstein flat objects}

Now let $\mathcal{B}$ be a class of right $R$-modules that contains the injectives and such that $\mathcal{GF}_{\mathcal B}(R)$ is closed under extensions (for instance if $\mathcal B$ is semi-definable and contains the injectives). We shall show that it is possible to apply the previous result in the setting where:
\begin{align*}
\mathcal{Q} & := \mathcal{GF}_{\mathcal B}(R), & \mathcal{R}' & := \mathcal{GC}_{\mathcal B}(R), & \mathcal{Q}' & := \mathcal{F}(R), & \mathcal{R} & := \mathcal{C}(R).
\end{align*}
The reader can keep in mind the case for which $\mathcal B$ is the class of all injective right $R$-modules (so $\mathcal{GF}_{\mathcal B}(R)$ is the class of Gorenstein flat modules).

It is well known that $(\mathcal{F}(R),\mathcal{C}(R))$ is a hereditary complete cotorsion pair for any ring $R$ (see \cite[Proposition 2]{FlatCoverConjecture} by L. Bican, R. El Bashir and E. E. Enochs). In the setting of complexes, recall from \cite[Corollary 4.10]{GillespieFlat} and \cite[Theorem 4.3]{AEGO} that the classes $\mathscr{F}(R)$ and $\Ch(\mathcal{F}(R))$ of flat complexes and complexes of flat modules are the left halves of two complete cotorsion pairs 
\begin{align*}
(\mathscr{F}(R),\mathscr{C}(R)) & & \text{and} & & (\Ch(\mathcal{F}(R)),(\Ch(\mathcal{F}(R)))^\perp).
\end{align*} 
On the other hand, by Corollary \ref{complete}, 
\begin{align*}
(\mathcal{GF}_{\mathcal B}(R),\mathcal{GC}_{\mathcal B}(R)), & & (\mathscr{GF}_{\mathscr{B}}(R),\mathscr{GC}_{\mathscr{B}}(R)) & & \text{and} & & (\mathscr{GF}_{\mathscr{B}}^{\otimes^.}(R),\mathscr{GC}_{\mathscr{B}}^{\otimes^.}(R))
\end{align*} 
are hereditary perfect cotorsion pairs in $\Mod(R)$ and $\Ch(R)$ in the cases where the left halves are closed under extesions. Therefore, since the inclusions 
\begin{align*}
\mathcal{F}(R) & \subseteq \mathcal{GF}_{\mathcal B}(R) & \text{and} & & \mathcal{GC}_{\mathcal B}(R) & \subseteq \mathcal{C}(R), \\
\mathscr{F}(R) & \subseteq \mathscr{GF}_{\mathscr{B}}(R) & \text{and} & & \mathscr{GC}_{\mathscr{B}}(R) & \subseteq \mathscr{C}(R), \\
\Ch(\mathcal{F}(R)) & \subseteq \mathscr{GF}_{\mathscr{B}}^{\otimes^.}(R) & \text{and} & & \mathscr{GC}_{\mathscr{B}}^{\otimes^.}(R) & \subseteq (\Ch(\mathcal{F}(R)))^\perp
\end{align*} 
are clear, the desired Hovey triples (and thus the associated relative Gorenstein flat model structures on $\Mod(R)$ and $\Ch(R)$) will be a consequence of the following result.

\begin{proposition}[compatibility between flat and relative Gorenstein flat objects]\label{prop:comp_condition}
The following equalities hold true for classes $\mathcal{B} \subseteq \mathsf{Mod}(R^{\rm op})$ and $\mathscr{B} \subseteq \mathsf{Ch}(R^{\rm op})$ containing $\mathcal{I}(R^{\rm op})$ and $\mathscr{I}(R^{\rm op})$, respectively.
\begin{enumerate}
\item $\mathcal{F}(R) \cap \mathcal{C}(R) = \mathcal{GF}_{\mathcal B}(R) \cap \mathcal{GC}_{\mathcal B}(R)$, provided that $\mathcal{GF}_{\mathcal B}(R)$ is closed under extensions.

\item $\mathscr{F}(R) \cap \mathscr{C}(R) = \mathscr{GF}_{\mathscr{B}}(R) \cap \mathscr{GC}_{\mathscr{B}}(R)$, provided that $\mathscr{GF}_{\mathscr B}(R)$ is closed under extensions.

\item $\Ch(\mathcal{F}(R)) \cap (\Ch(\mathcal{F}(R)))^\perp = \mathscr{GF}_{\mathscr{B}}^{\otimes^.}(R) \cap \mathscr{GC}_{\mathscr{B}}^{\otimes^.}(R)$, provided that $\mathscr{GF}_{\mathscr{B}}^{\otimes^.}(R)$ is closed under extensions.
\end{enumerate}
\end{proposition}

\begin{proof} \
\begin{enumerate}
\item Let us first prove the containment $(\supseteq)$. So suppose we are given a module $M \in \mathcal{GF}_{\mathcal{B}}(R) \cap \mathcal{GC}_{\mathcal{B}}(R)$. We already have that $M \in \mathcal{C}(R)$. On the other hand, since $M$ is Gorenstein $\mathcal{B}$-flat, we have by Lemma \ref{characterisations} a short exact sequence 
\[
0 \to M \to F \to M' \to 0
\] 
where $F$ is flat and $M'$ is Gorenstein $\mathcal{B}$-flat. This sequence splits, since $M$ is Gorenstein $\mathcal B$-cotorsion and so $\Ext^1_R(M',M) = 0$. Hence, $M$ is a direct summand of the flat module $F$, and so $M \in \mathcal{F}(R)$.

Now let us show the remaining containment $(\subseteq)$. Let $N \in \mathcal{F}(R) \cap \mathcal{C}(R)$. Then, it is clear that $N$ is Gorenstein $\mathcal{B}$-flat. On the other hand, since $(\mathcal{GF}_{\mathcal{B}}(R),\mathcal{GC}_{\mathcal{B}}(R))$ is a complete cotorsion pair, there exists a short exact sequence 
\[
0 \to N \to C \to F \to 0
\]
where $C \in \mathcal{GC}_{\mathcal{B}}(R)$ and $F \in \mathcal{GF}_{\mathcal{B}}(R)$. Since $N$ and $F$ are Gorenstein $\mathcal{B}$-flat and $\mathcal{GF}_{\mathcal{B}}(R)$ is closed under extensions, we have that $C \in \mathcal{GF}_{\mathcal{B}}(R) \cap \mathcal{GC}_{\mathcal{B}}(R) \subseteq \mathcal{F}(R) \cap \mathcal{C}(R)$. It follows that $F$ is a Gorenstein flat module with finite flat dimension, and so $F$ is flat by \cite[Corollary 10.3.4]{EJ}. Then, we have that $\Ext^1_R(F,N) = 0$ since $N$ is cotorsion, and so the previous exact sequence splits. It follows that $N$ is a direct summand of $C \in \mathcal{GC}_{\mathcal{B}}(R)$, and hence $N \in \mathcal{GC}_{\mathcal{B}}(R)$.

\item The containment $(\supseteq)$ follows as in the module case. Now let $X \in \mathscr{F}(R) \cap \mathscr{C}(R)$. As in (1), we can obtain a short exact sequence 
\begin{align}\label{sequence}
0 & \to X \to C \to F \to 0
\end{align}
where $F \in \mathscr{GF}_{\mathscr{B}}(R)$ and $C \in \mathscr{GF}_{\mathscr{B}}(R) \cap \mathscr{GC}_{\mathscr{B}}(R) \subseteq \mathscr{F}(R) \cap \mathscr{C}(R)$. It follows that $F$ is a Gorenstein flat complex (since $\mathscr{B}$ contains the injectives) with finite flat dimension $\leq 1$. Then, in particular, each $F_m$ is a Gorenstein flat module and so $F_m^+$ is Gorenstein injective. Thus $F^+$ is a Gorenstein injective complex (see \cite[Proposition 2.8]{yang:11:gorflat}). Also, ${\rm id}(F^+) = {\rm fd}(F) \leq 1$. So there is an exact sequence of complexes 
\[
0 \rightarrow F^+ \rightarrow E^0 \rightarrow E^1\rightarrow 0,
\] 
with $E^0$ and $E^1$ injective complexes. Since $E^1$ is injective and $F^+$ is Gorenstein injective, the sequence splits, so $F^+$ is an injective complex. It follows that $F$ is a flat complex (see \cite[Theorem 4.1.3]{jrgr}). Recall also that $X \in \mathscr{C}(R)$, and so the sequence~\eqref{sequence} splits. Therefore $X \in \mathscr{GC}_{\mathscr{B}}(R)$.

\item The containment $(\supseteq)$ follows as in the module case. Now for any $F \in \Ch(\mathcal{F}(R)) \cap (\Ch(\mathcal{F}(R)))^\perp$, we can proceed as in (1) to obtain a short exact sequence 
\begin{align}\label{eqn:FCL}
0 \to F \to C \to L \to 0
\end{align}
with $L \in \mathscr{GF}_{\mathscr{B}}^{\otimes^.}(R)$ and $C \in \mathscr{GF}_{\mathscr{B}}^{\otimes^.}(R) \cap \mathscr{GC}^{\otimes^.}_{\mathscr{B}}(R) \subseteq \Ch(\mathcal{F}(R)) \cap (\Ch(\mathcal{F}(R)))^\perp$. For the rest of the proof, we shall focus of showing that $L \in \Ch(\mathcal{F}(R))$. 

We know that $L \in \mathscr{GF}_{\mathscr{B}}^{\otimes^.}(R)$. By Proposition \ref{prop:relative_GF_is_GF} we have that $L \in \mathscr{GF}(R) = \Ch(\mathcal{GF}(R))$. On the other hand, $F, C \in \Ch(\mathcal{F}(R))$. Thus, for each $m \in \mathbb{Z}$, we have a short exact sequence 
\[
0 \to F_m \to C_m \to L_m \to 0
\] 
where $F_m$ and $C_m$ are flat modules and $L_m$ is Gorenstein flat, that is, $L_m$ is a Gorenstein flat module with flat dimension $\leq 1$. This implies that $L_m$ is flat for every $m \in \mathbb{Z}$, that is, $L \in \Ch(\mathcal{F}(R))$.
\end{enumerate}
\end{proof}

%%%%%%%%%%%%%%%%%%%%%%%%%%%%%%%%%%%%%
%%%%%%%%%%%%%%%%%%%%%%%%%%%%%%%%%%%%%

\subsection*{Relative Gorenstein flat model structures}

From the previous section, we can deduce the following result.

\begin{theorem}[the relative Gorenstein flat model structures on $\Mod(R)$ and $\mathsf{Ch}(R)$]\label{theo:GBF_model}
Let $\mathcal{B} \subseteq \mathsf{Mod}(R)$ and $\mathscr{B} \subseteq \mathsf{Ch}(R)$ be classes containing $\mathcal{I}(R^{\rm op})$ and $\mathscr{I}(R^{\rm op})$, respectively. Then, the following assertions hold:
\begin{enumerate}
\item \underline{The Gorenstein $\mathcal{B}$-flat model structure}: If $\mathcal{GF}_{\mathcal B}(R)$ is closed under extensions, then there exists a unique abelian model structure on $\Mod(R)$ such that $\mathcal{GF}_{\mathcal{B}}(R)$ is the class of cofibrant objects, and $\mathcal{C}(R)$ is the class of fibrant objects. In the case $\mathcal{B}$ is semi-definable, the class of trivial objects is given by $(\mathcal{PGF}_{\mathcal{B}}(R))^\perp$ (see Theorem \ref{theo:cotorsion_pair_PGFB}).

\item \underline{The Gorenstein $\mathscr{B}$-flat model structure}: If $\mathscr{GF}_{\mathscr B}(R)$ is closed under extensions, then there exists a unique abelian model structure on $\Ch(R)$ such that $\mathscr{GF}_{\mathscr{B}}(R)$ is the class of cofibrant objects, and $\mathscr{C}(R)$ is the class of fibrant objects. In the case $\mathscr{B}$ is semi-definable, the class of trivial objects is given by $(\mathscr{PGF}_{\mathscr{B}}(R))^\perp$.

\item \underline{The Gorenstein $\mathscr{B}$-flat model structure}: If $\mathscr{GF}_{\mathscr{B}}^{\otimes^.}(R)$ is closed under extensions, then there exists a unique abelian model structure on $\Ch(R)$ with $\mathscr{GF}_{\mathscr{B}}^{\otimes^.}(R)$ as the class of cofibrant objects, and whose fibrant objects are given by $(\Ch(\mathcal{F}(R)))^\perp$. 
\end{enumerate}
\end{theorem}

\begin{proof}
We only show how to obtain the Gorenstein $\mathcal{B}$-flat model structure on $\mathsf{Mod}(R)$, as the other two structures can be obtained in a similar way.

The first part is clear due to the compatibility relation proved in part (1) of Propositon \ref{prop:comp_condition}. Now let us assume that $\mathcal{B}$ is also semi-definable. By the comments at the beginning of this section, there exists a unique thick class $\mathcal{W}$ modules (the class of trivial objects) satisfying 
\begin{align*}
\mathcal{GF}_{\mathcal{B}}(R) \cap \mathcal{W} & = \mathcal{F}(R) & \text{ and } & & \mathcal{C}(R) \cap \mathcal{W} & = \mathcal{GC}_{\mathcal{B}}(R).
\end{align*}
Now, by Theorem \ref{theo:equivalences_GF} (conditions (c) and (d)), the class $(\mathcal{PGF}_{\mathcal{B}}(R))^\perp$ satisfies 
\[
\mathcal{GF}_{\mathcal{B}}(R)\cap (\mathcal{PGF}_{\mathcal{B}}(R))^\perp = \mathcal{F}(R),
\]
and 
\[
\mathcal{C}(R)\cap (\mathcal{PGF}_{\mathcal{B}}(R))^\perp \subseteq \mathcal{GC}_{\mathcal{B}}(R). 
\] 
But, since $\mathcal{PGF}_{\mathcal{B}}(R)\subseteq \mathcal{GF}_{\mathcal{B}}(R)$ and $\mathcal{GC}_{\mathcal{B}}(R)\subseteq \mathcal{C}(R)$, the containment 
\[
\mathcal{C}(R)\cap (\mathcal{PGF}_{\mathcal{B}}(R))^\perp \supseteq \mathcal{GC}_{\mathcal{B}}(R)
\] 
also holds. The class $(\mathcal{PGF}_{\mathcal{B}}(R))^\perp$ is clearly closed under extensions and direct summands, and by Theorem \ref{theo:cotorsion_pair_PGFB} it is also closed under cokernels of momorphisms. Let us finally see that $(\mathcal{PGF}_{\mathcal{B}}(R))^\perp$ is closed under kernels of epimorphisms. To this aim, let 
\[
0\to A\to B\to C\to 0
\] 
be a short exact sequence with $B,C\in (\mathcal{PGF}_{\mathcal{B}}(R))^\perp$. Firstly, since $(\mathcal{PGF}_{\mathcal{B}}(R),(\mathcal{PGF}_{\mathcal{B}}(R))^\perp)$ is a hereditary cotorsion pair (Theorem \ref{theo:cotorsion_pair_PGFB}), we immediately follow from the long exact sequence of cohomology associated to the previous short exact sequence that $\Ext^2_R(L,A)=0$, for each $L\in \mathcal{PGF}_{\mathcal{B}}(R)$. Now fix $F\in \mathcal{PGF}_{\mathcal{B}}(R)$. Then there exists a short exact sequence 
\[
0\to F\to P\to K\to 0,
\] 
with $K\in \mathcal{PGF}_{\mathcal{B}}(R)$ and $P$ projective. Then we have an exact sequence 
\[
\Ext^1_R(P,A)\to \Ext^1_R(F,A)\to \Ext^2_R(K,A).
\] 
Since the two terms in the extremes are 0, the middle one is also 0, that is $A\in (\mathcal{PGF}_{\mathcal{B}}(R))^\perp$, and so $(\mathcal{PGF}_{\mathcal{B}}(R))^\perp$ is closed under kernels of epimorphisms. Hence, it is a thick subcategory. Therefore, by the uniqueness of the class of trivial objects in a Hovey triple, we follow that $\mathcal {W} = (\mathcal{PGF}_{\mathcal{B}}(R))^\perp$.
\end{proof}

\begin{remark}\label{rem:scheme}
Let us make some comments concerning the cotorsion pair $(\mathcal{GF}_{\mathcal{B}}(R),\mathcal{GC}_{\mathcal{B}}(R))$. Under the assumption that $\mathcal{GF}_{\mathcal{B}}(R)$ is closed under extensions ($\mathcal{B}$ is not necessarily semi-definable), we can show that $(\mathcal{GF}_{\mathcal{B}}(R),\mathcal{GC}_{\mathcal{B}}(R))$ is a complete cotorsion pair such that 
\[
\mathcal{GF}_{\mathcal B}(R) \cap \mathcal{GC}_{\mathcal B}(R) = \mathcal{F}(R) \cap \mathcal{C}(R)
\] 
without using the fact that $\Mod(R)$ has enough projective modules. So the Gorenstein $\mathcal{B}$-flat model structure could be obtained in settings more general than modules or chain complexes. A good question in this sense is what conditions we need on a scheme $X$ in order to obtain a Gorenstein flat model structure on the category $\mathfrak{Qcoh}(X)$ of quasi-coherent sheaves over $X$ (which does not have enough projectives).\footnote{This is solved for semi-separeted noetherian schemes in \cite[Theorem 2.5]{CET}.} 

Here lies a difference with respect to {\v{S}}aroch and {{\v{S}{t'o}}}v{\'{\i}}{\v{c}}ek's work. In \cite[Corollary 3.11]{SarochStovicek}, they proved that $(\mathcal{GF}(R),\mathcal{C}(R) \cap (\mathcal{PGF}(R))^\perp)$ is a complete cotorsion pair. This is based on the construction of a cotorsion pair $(\mathcal{PGF}(R),(\mathcal{PGF}(R))^\perp)$ formed by the projectively coresolved Gorenstein flat modules, which uses the existence of enough projectives in $\Mod(R)$.
\end{remark}

The previous theorem yields the following particular model structures on $\mathsf{Mod}(R)$ and $\mathsf{Ch}(R)$.

\begin{corollary}[the Gorenstein flat and Gorenstein AC-flat model structures]
Let $R$ be an arbitrary associative ring with identity. Then, the following assertions hold: 
\begin{enumerate} 
\item \underline{The Gorenstein flat model structure on $\mathsf{Mod}(R)$}: There exists a unique abelian model structure on $\Mod(R)$ such that $\mathcal{GF}(R)$ is the class of cofibrant objects and $\mathcal{C}(R)$ is the class of fibrant objects.\footnote{Similarly, there is a unique abelian model structure on $\mathsf{Ch}(R)$ where $\mathscr{GF}(R)$ is the class of cofibrant objects and $\mathscr{C}(R)$ is the class of fibrant objects.} 

\item \underline{The Gorenstein AC-flat model structure on $\mathsf{Mod}(R)$}: There exists a unique abelian model structure on $\Mod(R)$ with the same fibrant objects and whose cofibrant objects are the Gorenstein AC-flat modules.\footnote{Similarly, there is a unique abelian model structure on $\mathsf{Ch}(R)$ such that the Gorenstein AC-flat complexes are the cofibrant objects, and whose fibrant objects are the cotorsion complexes.} 
\end{enumerate}
\end{corollary}

%%%%%%%%%%%%%%%%%%%%%%%%%%%%%%%%%%%%%
%%%%%%%%%%%%%%%%%%%%%%%%%%%%%%%%%%%%%

\subsection*{Stable categories of cotorsion relative Gorenstein flat objects}

The Gorenstein flat model structure just mentioned was first found in \cite{SarochStovicek} for arbitrary rings, although it had been found previously for particular choices of $R$, such as Gorenstein rings \cite{GillespieHovey}, Ding-Chen rings \cite{GillespieDCh} or coherent rings \cite{GillespieGF}. 

The Gorenstein AC-flat model structure, on the other hand, was previously unknown. Following Gillespie's arguments for \cite[Corollary 3.4]{GillespieGF}, we can note that the class $\mathcal{GF}_{\rm AC}(R) \cap \mathcal{C}(R)$ of cotorsion Gorenstein AC-flat modules is a Frobenius category (that is, an exact category with enough projectives and injectives, and in which the projective and injective objects coincide). Thus, we have a stable category 
\[
[\mathcal{GF}_{\rm AC}(R) \cap \mathcal{C}(R)] / \sim,
\] 
where $\sim$ is the equivalence relation defined for morphisms between modules in $\mathcal{GF}_{\rm AC}(R) \cap \mathcal{C}(R)$ given by $f \sim g$ if, and only if, the difference $f - g$ factors through a flat cotorsion module (that is, the projective-injective objects of $\mathcal{GF}_{\rm AC}(R) \cap \mathcal{C}(R)$). Moreover, since the cotorsion pairs 
\begin{align*}
(\mathcal{GF}_{\rm AC}(R),(\mathcal{GF}_{\rm AC}(R))^\perp) & & \text{and} & & (\mathcal{F}(R),\mathcal{C}(R))
\end{align*}
that induce the Gorenstein AC-flat model structure are hereditary, we have that this model is hereditary in the sense of \cite{GillespieExact}. The same reasoning can be applied to any class of Gorenstein flat modules relative to a semi-definable class. Hence, the following result can be obtained as \cite[Corollary 3.4]{GillespieGF}.

\begin{corollary}\label{coro:relative_stable_category}
Let $\mathcal{B}$ be a class of right $R$-modules that contains the injectives and such that $\mathcal{GF}_{\mathcal B}(R)$ is closed under extensions. Then, the class $\mathcal{GF}_{\mathcal{B}}(R) \cap \mathcal{C}(R)$ of Gorenstein $\mathcal{B}$-flat cotorsion modules is a Frobenius category with the exact structure given by the short exact sequences with terms in $\mathcal{GF}_{\mathcal{B}}(R) \cap \mathcal{C}(R)$. The projective-injective modules are given by the flat-cotorsion modules. Furthermore, the homotopy category of the Gorenstein $\mathcal{B}$-flat model structure from Theorem~\ref{theo:GBF_model} is triangle equivalent to the stable category 
\[
[\mathcal{GF}_{\mathcal{B}}(R) \cap \mathcal{C}(R)] / \sim,
\] 
with the relation $\sim$ defined above. 
\end{corollary}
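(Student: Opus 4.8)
The plan is to deduce the statement from the general theory of hereditary abelian model structures, exactly as in Gillespie's treatment of the absolute flat stable category in \cite[Corollary 3.4]{GillespieGF}. Recall from the discussion preceding Theorem~\ref{theo:GBF_model} that the Gorenstein $\mathcal{B}$-flat model structure is the abelian model structure produced, via \cite[Main Theorem 1.2]{GillespieTriple} and Hovey's correspondence \cite[Theorem 2.2]{Hovey}, from the two complete cotorsion pairs $(\mathcal{F}(R),\mathcal{C}(R))$ and $(\mathcal{GF}_{\mathcal{B}}(R),\mathcal{GC}_{\mathcal{B}}(R))$: the associated Hovey triple $(\mathcal{Q},\mathcal{W},\mathcal{R})$ has $\mathcal{Q}=\mathcal{GF}_{\mathcal{B}}(R)$, $\mathcal{R}=\mathcal{C}(R)$, $\mathcal{Q}\cap\mathcal{W}=\mathcal{F}(R)$ and $\mathcal{W}\cap\mathcal{R}=\mathcal{GC}_{\mathcal{B}}(R)$, the compatibility condition being precisely Proposition~\ref{prop:comp_condition}. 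Since $(\mathcal{F}(R),\mathcal{C}(R))$ is hereditary and $(\mathcal{GF}_{\mathcal{B}}(R),\mathcal{GC}_{\mathcal{B}}(R))$ is hereditary by Corollary~\ref{complete}, this model structure is hereditary in the sense of \cite{GillespieExact}.

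First I would record the relevant identifications. The bifibrant objects are $\mathcal{Q}\cap\mathcal{R}=\mathcal{GF}_{\mathcal{B}}(R)\cap\mathcal{C}(R)$, and since $\mathcal{GF}_{\mathcal{B}}(R)$ (the left-hand side of a cotorsion pair) and $\mathcal{C}(R)$ (a right-hand side) are both closed under direct summands, so is $\mathcal{Q}\cap\mathcal{R}$; in particular it is idempotent complete. The trivial bifibrant objects are
\[
\mathcal{W}\cap\mathcal{Q}\cap\mathcal{R}=(\mathcal{Q}\cap\mathcal{W})\cap\mathcal{R}=\mathcal{F}(R)\cap\mathcal{C}(R)=\mathcal{GF}_{\mathcal{B}}(R)\cap\mathcal{GC}_{\mathcal{B}}(R),
\]
the last equality again by Proposition~\ref{prop:comp_condition}; that is, the trivial bifibrant objects are exactly the flat-cotorsion modules. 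Now I would invoke the general structure theorem for hereditary abelian model structures (see \cite{GillespieExact}): the collection of all short exact sequences of $\Mod(R)$ whose three terms lie in $\mathcal{Q}\cap\mathcal{R}$ makes $\mathcal{Q}\cap\mathcal{R}$ into a weakly idempotent complete exact category with enough projectives and enough injectives, in which the projective objects, the injective objects, and the objects of $\mathcal{W}\cap\mathcal{Q}\cap\mathcal{R}$ all coincide. Feeding in the identification above yields the first two assertions: $\mathcal{GF}_{\mathcal{B}}(R)\cap\mathcal{C}(R)$ is a Frobenius category for the stated exact structure, and its projective-injective objects are precisely the flat-cotorsion modules.

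For the last assertion I would use the further general fact that the homotopy category of a hereditary abelian model structure is triangle equivalent to the stable category of its Frobenius category of bifibrant objects: every object admits a bifibrant replacement, the morphisms of $\mathrm{Ho}$ between bifibrant objects are the morphisms of $\mathcal{Q}\cap\mathcal{R}$ modulo those factoring through a projective-injective object, and the triangulation transported from Hovey's correspondence matches the canonical triangulation of a stable Frobenius category (with suspension the cosyzygy functor). Since we have just identified the projective-injective objects of $\mathcal{Q}\cap\mathcal{R}$ with the flat-cotorsion modules, the quotient relation is exactly the relation $\sim$ of the statement, and the equivalence $\mathrm{Ho}(\mathcal{M})\simeq[\mathcal{GF}_{\mathcal{B}}(R)\cap\mathcal{C}(R)]/\sim$ is a triangle equivalence. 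I do not expect a genuine obstacle here: the only ingredient special to the relative setting is the identification $\mathcal{W}\cap\mathcal{Q}\cap\mathcal{R}=\mathcal{F}(R)\cap\mathcal{C}(R)$, supplied by Proposition~\ref{prop:comp_condition}, while everything else is the standard abelian/exact-model-structure formalism applied verbatim, as in \cite[Corollary 3.4]{GillespieGF}.
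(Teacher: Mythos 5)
Your proposal is correct and takes essentially the same approach as the paper: the paper's argument (given in the paragraph preceding the corollary) likewise reduces to Gillespie's \cite[Corollary 3.4]{GillespieGF} and the general theory of hereditary abelian model structures from \cite{GillespieExact}, after noting that both cotorsion pairs in the Hovey triple are hereditary and that Proposition~\ref{prop:comp_condition} identifies the trivial bifibrant objects with the flat-cotorsion modules. Your write-up simply makes explicit the identifications $\mathcal{Q}\cap\mathcal{R}=\mathcal{GF}_{\mathcal{B}}(R)\cap\mathcal{C}(R)$ and $\mathcal{W}\cap\mathcal{Q}\cap\mathcal{R}=\mathcal{F}(R)\cap\mathcal{C}(R)$ that the paper treats as immediate.
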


\begin{remark}
We can also have an statement similar to Corollary~\ref{coro:relative_stable_category} is the context of complexes, that is, there is a stable category 
\[
[\mathscr{GF}_{\mathscr{B}}(R) \cap \mathscr{C}(R)] / \sim
\] 
of cotorsion relative Gorenstein flat complexes, where $f \sim g$ if $f-g$ factors through a flat cotorsion complex, which is triangle equivalent to the homotopy category of the Gorenstein $\mathscr{B}$-flat model structure. 

We have a similar description for the stable category 
\[
[\mathscr{GF}^{\otimes^.}_{\mathscr{B}}(R) \cap (\Ch(\mathcal{F}(R)))^\perp] / \sim
\]
in terms of the model structure involving $\mathscr{GF}^{\otimes^.}_{\mathscr{B}}(R)$. Using the result \cite[Theorem 4.10]{GillespieModels} by Gillespie, explained in more detailed in the next section, note that the class $\Ch(\mathcal{F}(R)) \cap (\Ch(\mathcal{F}(R)))^\perp$ of the projective-injective objects of the Frobenius category $\mathscr{GF}^{\otimes^.}_{\mathscr{B}}(R) \cap (\Ch(\mathcal{F}(R)))^\perp$ is the class of contractible complexes of flat cotorsion modules. Moreover, due to the description of the class $(\Ch(\mathcal{F}(R)))^\perp$ given in \cite[Proposition 3.2]{GillespieDegreewise}, we have that every projective-injective complex $P$ is exact and each $P_m$ is a flat-cotorsion module. It then follows by \cite[Theorem 4.1]{BCE} that $P$ has cotorsion cycles, that is, $P \in \Ch(\mathcal{F}(R)) \cap \widetilde{\mathcal{C}(R)}$, where $\widetilde{\mathcal{C}(R)}$ denotes the class of exact complexes with cotorsion cycles. 
\end{remark}

In the following section, we shall study the relation between the Gorenstein $\mathscr{B}$-flat model structure and model structures constructed from relative Gorenstein flat modules. This will allow us to give other descriptions of the stable category $[\mathscr{GF}_{\mathscr{B}}(R) \cap \mathscr{C}(R)] / \sim$.

%%%%%%%%%%%%%%%%%%%%%%%%%%%%%%%%%%%%%
%%%%%%%%%%%%%%%%%%%%%%%%%%%%%%%%%%%%%
%%%%%%%%%%%%%%%%%%%%%%%%%%%%%%%%%%%%%
%%%%%%%%%%%%%%%%%%%%%%%%%%%%%%%%%%%%%

\section{\textbf{Model structures arising from relative Gorenstein flat modules}}\label{sec:comparison}

In this section we shall make use of a recent result by Gillespie \cite[Theorem 4.10]{GillespieModels} to yield a recollement in $\Ch(R)$ between homotopy categories that involve the class of Gorenstein $\mathcal{B}$-flat modules. To this aim, we shall assume that the class $\mathcal{GF}_{\mathcal{B}}(R)$ is closed under extensions, like for instance when we take $\mathcal{B}$ as a semi-definable class of right $R$-modules.

\begin{theorem}\label{inducedmodels}
Assume that $\mathcal{GF}_{\mathcal B}(R)$ is closed under extensions. Then we have three hereditary abelian model structures given by the triples:
\begin{align*}
\mathcal{M}_1 & = ({\rm ex}(\mathcal{GF}_{\mathcal{B}}(R)),\mathcal{W}_1, {\rm dg}(\mathcal{GC}_{\mathcal{B}}(R))), \\ 
\mathcal{M}_2 & = (\Ch(\mathcal{GF}_{\mathcal B}(R)), \mathcal{W}_2, {\rm dg}(\mathcal{GC}_{\mathcal{B}}(R))), \\
\mathcal{M}_3 & = ({\rm dg}(\mathcal{GF}_{\mathcal{B}}(R)), \mathcal{E}, {\rm dg}(\mathcal{GC}_{\mathcal{B}}(R))).
\end{align*}
The core of each triple $\mathcal{M}_1, \mathcal{M}_2$ and $\mathcal{M}_3$ equals to the class of contractible complexes with components in $\mathcal{GF}_{\mathcal{B}}(R) \cap \mathcal{GC}_{\mathcal{B}}(R)$. So we have a left recollement between the corresponding homotopy categories:
\[
\xymatrix{{\rm Ho}(\mathcal M_1)
 \ar[rr]|j
&&  {\rm Ho}(\mathcal M_2) \ar@/_1pc/[ll]\ar@/^1pc/[ll]\ar[rr]|w && {\rm Ho}(\mathcal M_3) \ar@/_1pc/[ll]\ar@/^1pc/[ll] },\]

\bigskip\par\noindent
where 
\[
{\rm Ho}(\mathcal{M}_1) \cong \frac{\mathsf{K}_{\mathrm{ac}}(\mathcal{GF}_{\mathcal{B}}(R))}{\widetilde{\mathcal{GF}}_{\mathcal{B}}(R)}, \mbox{ } {\rm Ho}(\mathcal{M}_2) \cong \frac{\mathsf{K}(\mathcal{GF}_{\mathcal{B}}(R))}{\widetilde{\mathcal{GF}}_{\mathcal{B}}(R)} \mbox{ \ and \ } {\rm Ho}(\mathcal{M}_3) \cong \mathsf{D}(R).
\]
Here:
\begin{itemize} 
\item $\mathcal{E}$ denotes the class of exact complexes in $\Ch(R)$, and ${\rm ex}(\mathcal{GF}_{\mathcal{B}}(R)) = \Ch(\mathcal{GF}_{\mathcal{B}}(R)) \cap \mathcal{E}$.

\item ${\rm dg}(\mathcal{GC}_{\mathcal{B}}(R))$ is the class of complexes $Y$ in $\Ch(\mathcal{GC}_{\mathcal{B}}(R))$ such that $\mathcal{H}{\rm om}(X,Y)$ is exact whenever $X$ is an exact complex with cycles in $\mathcal{GF}_{\mathcal{B}}(R)$. The class of such complexes in $\mathsf{K}(\mathcal{GF}_{\mathcal{B}}(R))$ is denoted by $\widetilde{\mathcal{GF}}_{\mathcal{B}}(R)$.
\end{itemize}
\end{theorem}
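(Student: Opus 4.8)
The plan is to invoke Gillespie's general machinery from \cite[Theorem 4.10]{GillespieModels}, which takes a hereditary abelian model structure on $\Mod(R)$ whose cofibrant class is $\mathcal{GF}_{\mathcal{B}}(R)$ and fibrant class is $\mathcal{C}(R)$ (this is exactly Theorem~\ref{theo:GBF_model}, available since $\mathcal{GF}_{\mathcal{B}}(R)$ is assumed closed under extensions), and produces the three chain-complex model structures $\mathcal{M}_1$, $\mathcal{M}_2$, $\mathcal{M}_3$ on $\Ch(R)$ together with the left recollement. Concretely, Gillespie's theorem asserts that from a hereditary abelian model structure $(\mathcal{A},\mathcal{W},\mathcal{B})$ on an abelian category one obtains on $\Ch(R)$ the \emph{exact}, \emph{degreewise} and \emph{dg} versions; applied to our situation with $(\mathcal{A},\mathcal{W},\mathcal{B}) = (\mathcal{GF}_{\mathcal{B}}(R),\mathcal{W},\mathcal{C}(R))$ these are precisely $\mathcal{M}_1$, $\mathcal{M}_2$ and $\mathcal{M}_3$. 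So the first step is simply to check that the hypotheses of \cite[Theorem 4.10]{GillespieModels} are met: we need $(\mathcal{GF}_{\mathcal{B}}(R),\mathcal{GC}_{\mathcal{B}}(R))$ and $(\mathcal{F}(R),\mathcal{C}(R))$ to be complete hereditary cotorsion pairs with $\mathcal{F}(R)\subseteq\mathcal{GF}_{\mathcal{B}}(R)$, $\mathcal{GC}_{\mathcal{B}}(R)\subseteq\mathcal{C}(R)$ and the compatibility $\mathcal{F}(R)\cap\mathcal{C}(R)=\mathcal{GF}_{\mathcal{B}}(R)\cap\mathcal{GC}_{\mathcal{B}}(R)$; the first pair is Corollary~\ref{complete}, the second is classical (Bican--El~Bashir--Enochs), and the compatibility is Proposition~\ref{prop:comp_condition}.

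Second, I would identify the cores. Recall that the \emph{core} of a Hovey triple $(\mathcal{Q},\mathcal{W},\mathcal{R})$ is $\mathcal{Q}\cap\mathcal{W}\cap\mathcal{R}$, which by the Hovey correspondence equals $(\mathcal{Q}\cap\mathcal{W})\cap\mathcal{R} = \mathcal{Q}'\cap\mathcal{R}$ where $\mathcal{Q}'$ is the trivially-cofibrant class. For the degreewise/exact/dg constructions of Gillespie, the cores of all three coincide, and a general fact (see Gillespie's work) identifies this common core with the class of contractible complexes having components in the core of the base model structure, which here is $\mathcal{F}(R)\cap\mathcal{C}(R)=\mathcal{GF}_{\mathcal{B}}(R)\cap\mathcal{GC}_{\mathcal{B}}(R)$ by Proposition~\ref{prop:comp_condition} again. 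Since a contractible complex with flat-cotorsion components is the same thing as a contractible complex with components in $\mathcal{GF}_{\mathcal{B}}(R)\cap\mathcal{GC}_{\mathcal{B}}(R)$, the asserted description of the core follows. The fact that $\mathcal{M}_1$ and $\mathcal{M}_2$ share the fibrant class ${\rm dg}(\mathcal{GC}_{\mathcal{B}}(R))$, and $\mathcal{M}_2$ and $\mathcal{M}_3$ share it too, while $\mathcal{M}_1$ and $\mathcal{M}_3$ share the trivial-object passage, is what triggers the left recollement: Gillespie's Theorem~4.10 packages exactly the situation of three model structures with a common core into a left recollement of the homotopy categories, with the functors $j$ and $w$ being the canonical comparison functors (identity on objects).

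Third, I would compute the three homotopy categories. Since $\mathcal{M}_3 = ({\rm dg}(\mathcal{GF}_{\mathcal{B}}(R)),\mathcal{E},{\rm dg}(\mathcal{GC}_{\mathcal{B}}(R)))$ has trivial objects exactly the exact complexes $\mathcal{E}$, its homotopy category is $\Ch(R)[\mathcal{E}^{-1}]$, i.e. the derived category $\mathsf{D}(R)$; this is the standard identification once one knows $({\rm dg}(\mathcal{GF}_{\mathcal{B}}(R)),\mathcal{E}\cap{\rm dg}(\mathcal{GF}_{\mathcal{B}}(R)))$ and $(\mathcal{E}\cap{\rm dg}(\mathcal{GC}_{\mathcal{B}}(R)),{\rm dg}(\mathcal{GC}_{\mathcal{B}}(R)))$ are complete cotorsion pairs and that ${\rm dg}(\mathcal{GF}_{\mathcal{B}}(R))$ contains all complexes of projectives (it does, since $\mathcal{P}(R)\subseteq\mathcal{F}(R)\subseteq\mathcal{GF}_{\mathcal{B}}(R)$). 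For $\mathcal{M}_2$, the cofibrant--fibrant objects are the complexes in $\Ch(\mathcal{GF}_{\mathcal{B}}(R))\cap{\rm dg}(\mathcal{GC}_{\mathcal{B}}(R))$, and the homotopy category is the stable category of that Frobenius category, which one identifies with $\mathsf{K}(\mathcal{GF}_{\mathcal{B}}(R))$ modulo the thick subcategory $\widetilde{\mathcal{GF}}_{\mathcal{B}}(R)$ of complexes that are ``trivial'' in the degreewise model — the same argument as in Gillespie's identification of the homotopy category of the degreewise-projective model with $\mathsf{K}(\mathcal{P}(R))$. For $\mathcal{M}_1$, restricting to exact complexes with cycles in $\mathcal{GF}_{\mathcal{B}}(R)$ gives $\mathsf{K}_{\mathrm{ac}}(\mathcal{GF}_{\mathcal{B}}(R))$ modulo $\widetilde{\mathcal{GF}}_{\mathcal{B}}(R)$. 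The main obstacle is verifying that these pairs $({\rm ex}(\mathcal{GF}_{\mathcal{B}}(R)),\mathcal{W}_1,{\rm dg}(\mathcal{GC}_{\mathcal{B}}(R)))$ and $(\Ch(\mathcal{GF}_{\mathcal{B}}(R)),\mathcal{W}_2,{\rm dg}(\mathcal{GC}_{\mathcal{B}}(R)))$ really are Hovey triples, i.e. that the relevant cotorsion pairs in $\Ch(R)$ are complete and compatible; this rests entirely on the hereditariness of the base cotorsion pairs and the machinery of \cite{GillespieModels}, so I would cite it rather than re-derive it, but one must be careful that the hypotheses there (in particular that the base pairs are hereditary and that $\mathcal{GF}_{\mathcal{B}}(R)$ is a Kaplansky/deconstructible class so that the induced pairs are cogenerated by sets, which follows from Lemma~\ref{Kaplansky} and Corollary~\ref{directlimits}) are genuinely satisfied here.
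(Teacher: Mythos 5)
Your proposal is correct and takes essentially the same approach as the paper, which gives no separate proof but obtains the theorem by citing Gillespie's \cite[Theorem~4.10]{GillespieModels} once the hypotheses are in place via Corollary~\ref{complete}, the Bican--El~Bashir--Enochs flat cotorsion pair, and the compatibility in Proposition~\ref{prop:comp_condition}. You simply spell out those hypothesis checks, the core identification, and the homotopy-category computations in more detail than the paper does.
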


%%%%%%%%%%%%%%%%%%%%%%%%%%%%%%%%%%%%%
%%%%%%%%%%%%%%%%%%%%%%%%%%%%%%%%%%%%%

\subsection*{\textbf{Comparison between Gorenstein flat models in $\Ch(R)$}}

Let $\mathcal{B}$ be a class of right $R$-modules, and $\mathscr{D}$ a class of complexes such that $\mathscr{D} \supseteq \Ch(\mathcal{B})$. Assume that $\mathscr{D}$ is semi-definable and contains all injective complexes. From Theorems \ref{theo:GBF_model} and \ref{inducedmodels} we have the two models $\Ch(R)_{\mathscr{D}\textrm{-flat}}$ and $\mathcal M_2$ on $\Ch(R)$. They are given by the triples 
\[
\Ch(R)_{\mathscr{D}\textrm{-flat}} = (\mathscr{GF}_{\mathscr{D}}(R), \mathcal{W}, {\rm dg}(\mathcal{C}(R)) \mbox{ \ and \ } \mathcal{M}_2 = (\Ch(\mathcal{GF}_{\mathcal{B}}(R)),\mathcal{W}_2, {\rm dg}(\mathcal{GC}_{\mathcal{B}}(R))).
\] 
The homotopy category  ${\rm Ho}(\Ch(R)_{\mathscr{D}\textrm{-flat}})$ is triangle equivalent to the stable category 
\[
[\mathscr{GF}_{\mathscr{D}}(R) \cap {\rm dg}(\mathcal{C}(R))] / \sim,
\] 
where $f\sim g$ if $f-g$ factors through a complex in $\mathscr{F}(R) \cap \mathscr{C}(R)$ (that is, a flat-cotorsion complex). In turn, the homotopy category ${\rm Ho}(\mathcal{M}_2)$ is  triangle equivalent to the derived category
\[
\mathsf{D}(\mathcal{GF}_{\mathcal{B}}(R)) := \frac{\mathsf{K}(\mathcal{GF}_{\mathcal{B}}(R))}{\widetilde{\mathcal{GF}}_{\mathcal{B}}(R)}.
\]
In this section we get an adjunction between these two homotopy categories. We need to recall the notion of Quillen adjunction between two model categories:

\begin{definition}
Suppose $\mathcal M$ and $\mathcal{M'}$ are model categories.
\begin{enumerate}
\item We call a functor $F: \mathcal M \rightarrow \mathcal{M'}$ a \emph{left Quillen functor} if $F$ is a left adjoint and preserves cofibrations and trivial cofibrations.

\item We call a functor $U: \mathcal{M'} \rightarrow \mathcal M$ a \emph{right Quillen functor} if $U$ is a right adjoint and preserves fibrations and trivial fibrations.

\item Suppose $(F, U, \varphi)$ is an adjunction from $\mathcal M$ to $\mathcal{M'}$. That is, $F$ is a functor $\mathcal M \rightarrow \mathcal{M'}$, $U$ is a functor $\mathcal{M'} \rightarrow \mathcal M$, and $\varphi$ is a natural isomorphism $\Hom(FA,B) \rightarrow \Hom(A,UB)$ expressing $U$ as a right adjoint of $F$. We call $(F, U, \varphi)$ a \emph{Quillen adjunction} if $F$ is a left Quillen functor.
\end{enumerate}
\end{definition}

\begin{lemma}\label{aQ1}\cite[Lemma 1.3.4]{hovey2}
Suppose $(F,U,\varphi): \mathcal M \rightarrow \mathcal{M'}$ is an adjunction, and $\mathcal M$ and $\mathcal{M'}$ are model categories. Then $(F,U,\varphi)$ is a Quillen adjunction if and only if $U$ is a right Quillen functor.
\end{lemma}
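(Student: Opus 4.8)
Since both "left Quillen functor" and "right Quillen functor" are defined by preservation of (trivial) cofibrations, respectively (trivial) fibrations, and $(F,U,\varphi)$ is by definition a Quillen adjunction exactly when $F$ is a left Quillen functor, the whole statement amounts to the single assertion: $F$ preserves cofibrations and trivial cofibrations if and only if $U$ preserves fibrations and trivial fibrations. The plan is to deduce this from the lifting-property characterisations of the four distinguished classes of maps in a model category, together with the fact that an adjunction converts lifting problems into lifting problems.

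\textbf{Key steps.} First I would recall the two standard ingredients. (i) In any model category, a map is a cofibration if and only if it has the left lifting property with respect to every trivial fibration, a map is a trivial cofibration if and only if it has the left lifting property with respect to every fibration, and dually a map is a (trivial) fibration if and only if it has the right lifting property with respect to every trivial cofibration (respectively, every cofibration). (ii) For a morphism $f$ of $\mathcal M$ and a morphism $g$ of $\mathcal{M'}$, the natural isomorphism $\varphi$ induces a bijection between commutative squares with left edge $F(f)$ and right edge $g$ and commutative squares with left edge $f$ and right edge $U(g)$, compatibly with diagonal fillers; hence $F(f)$ has the left lifting property with respect to $g$ if and only if $f$ has the left lifting property with respect to $U(g)$. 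Next I would run the formal manipulation twice. For cofibrations: $F$ preserves cofibrations means that for every cofibration $i$ of $\mathcal M$ and every trivial fibration $p$ of $\mathcal{M'}$ the map $F(i)$ lifts against $p$; by (ii) this is equivalent to $i$ lifting against $U(p)$ for all such $i,p$, which by (i) says exactly that $U(p)$ is a trivial fibration for every trivial fibration $p$, i.e. that $U$ preserves trivial fibrations. Replacing the pair "cofibration/trivial fibration" by "trivial cofibration/fibration" and repeating verbatim gives: $F$ preserves trivial cofibrations if and only if $U$ preserves fibrations. Conjoining the two equivalences yields that $F$ is a left Quillen functor precisely when $U$ is a right Quillen functor, and then the lemma follows by unwinding the definition of Quillen adjunction and invoking that $F$ is a left adjoint by hypothesis.

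\textbf{Main obstacle.} There is no real difficulty of substance; the only point requiring care is step (ii), namely checking that $\varphi$ genuinely matches the two commutative squares and carries a lift of one to a lift of the other. This is a routine diagram chase using naturality of $\varphi$ together with the triangle identities of the adjunction, and I would either spell it out briefly or simply cite it, since it is entirely standard.
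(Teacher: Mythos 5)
The paper does not give a proof of this lemma---it is quoted verbatim from Hovey's book with a citation to \cite[Lemma 1.3.4]{hovey2}. Your argument is correct and is essentially the standard proof found in Hovey: translate the preservation conditions into lifting properties and use the adjunction isomorphism to transport lifting problems (and their solutions) across the adjunction, yielding the two equivalences ``$F$ preserves cofibrations $\Leftrightarrow$ $U$ preserves trivial fibrations'' and ``$F$ preserves trivial cofibrations $\Leftrightarrow$ $U$ preserves fibrations''.
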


\begin{definition}
Suppose $\mathcal{M}$ and $\mathcal{M'}$ are model categories.
\begin{enumerate}
\item If $F : \mathcal M \rightarrow \mathcal{M'}$ is a left Quillen functor, define the total left derived functor
$LF: {\rm Ho} (\mathcal M) \rightarrow {\rm Ho} (\mathcal{M'})$ to be the composite $$\xymatrix{{\rm Ho} (\mathcal M) \ar[r]^{{\rm Ho}(Q)} & {\rm Ho} (\mathcal M_c) \ar[r]^{{\rm Ho}(F)} & {\rm Ho} (\mathcal{M'}}),$$
where $Q$ is the cofibrant replacement functor.
Given a natural transformation $\tau : F \rightarrow F' $ of left Quillen functors, define the total derived natural transformation $L_{\tau}$ to be $ {\rm Ho} (\tau) \circ {\rm Ho} (Q)$, so that $(L \tau)_X = \tau_{QX}$.

\item  If $U : \mathcal{M'} \rightarrow \mathcal M$ is a right Quillen functor, define the total right derived functor $RU: {\rm Ho} (\mathcal{M'}) \rightarrow {\rm Ho} (\mathcal M)$ of $U$ to be the composite $$\xymatrix{ {\rm Ho} (\mathcal{M'}) \ar[r]^{{\rm Ho}(R)} &  {\rm Ho} (\mathcal{M'}_f) \ar[r]^{{\rm Ho}(U)} & {\rm Ho} (\mathcal M}),$$ where $R$ is the fibrant replacement functor.
Given a natural transformation $\tau : U \rightarrow U'$ of right Quillen functors, define the total derived natural transformation $R \tau$ to be ${\rm Ho} (\tau) \circ {\rm Ho} (R)$, so that $R\tau_X = \tau_{RX} X$.
\end{enumerate}
\end{definition}

\begin{lemma}\label{aQ2}\cite[Lemma 1.3.10]{hovey2}
Suppose $\mathcal M$ and $\mathcal{M'}$ are model categories and $(F,U,\varphi): \mathcal M \rightarrow  \mathcal{M'}$ is a Quillen adjunction. Then $LF$ and $RU$ are part of an adjunction $L(F,U,\varphi) =
(LF,RU,R\varphi)$, which we call the \emph{derived adjunction}.
\end{lemma}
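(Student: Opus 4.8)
The plan is to reduce the statement to the standard description of hom-sets in a homotopy category and then to verify that the adjunction isomorphism $\varphi$ descends to homotopy classes of maps. First I would recall the basic facts of Quillen's localization theory (see \cite[Chapter 1]{hovey2}): for a model category $\mathcal{M}$ with cofibrant replacement $Q$ and fibrant replacement $R$, the functors ${\rm Ho}(Q)$ and ${\rm Ho}(R)$ are equivalences, so every object of ${\rm Ho}(\mathcal{M})$ is isomorphic to a cofibrant-fibrant one; moreover, if $A$ is cofibrant and $B$ is fibrant, the canonical map $\mathcal{M}(A,B) \to {\rm Ho}(\mathcal{M})(A,B)$ is surjective and identifies ${\rm Ho}(\mathcal{M})(A,B)$ with $\mathcal{M}(A,B)/{\simeq}$, where $\simeq$ is the homotopy relation (left and right homotopy agreeing in this range). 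Hence it suffices to produce a natural bijection ${\rm Ho}(\mathcal{M}')(FA,B) \cong {\rm Ho}(\mathcal{M})(A,UB)$ for $A$ cofibrant in $\mathcal{M}$ and $B$ fibrant in $\mathcal{M}'$, and then to check that it assembles into an adjunction compatible with $LF = {\rm Ho}(F)\circ{\rm Ho}(Q)$ and $RU = {\rm Ho}(U)\circ{\rm Ho}(R)$.

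The second step uses Ken Brown's Lemma: since $(F,U,\varphi)$ is a Quillen adjunction, the left Quillen functor $F$ preserves cofibrant objects and weak equivalences between them, and dually $U$ preserves fibrant objects and weak equivalences between them. Thus for $A$ cofibrant and $B$ fibrant the point-set adjunction isomorphism $\varphi_{A,B}\colon \mathcal{M}'(FA,B) \to \mathcal{M}(A,UB)$ makes sense, and the crux is to show that $\varphi_{A,B}$ and $\varphi_{A,B}^{-1}$ carry homotopic maps to homotopic maps. For this I would argue that $F$ sends a cylinder object $A \amalg A \rightarrowtail \mathrm{Cyl}(A) \xrightarrow{\;\sim\;} A$ of the cofibrant object $A$ to a cylinder object of $FA$ (it preserves the cofibration, and the weak equivalence by Brown's Lemma), so $f \simeq g$ in $\mathcal{M}'(FA,B)$ implies $Ff \simeq Fg$; dually $U$ sends path objects of fibrant objects to path objects. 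Transposing a left homotopy $\mathrm{Cyl}(A) \to UB$ across $\varphi^{-1}$ and a right homotopy $FA \to \mathrm{Path}(B)$ across $\varphi$ then shows that both directions respect $\simeq$, so $\varphi$ induces the required bijection on homotopy classes, natural in $A$ and $B$.

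Finally I would promote this to the homotopy categories: precomposing with the equivalences ${\rm Ho}(Q)$ and ${\rm Ho}(R)$ yields a natural isomorphism ${\rm Ho}(\mathcal{M}')(LF\, X, Y) \cong {\rm Ho}(\mathcal{M})(X, RU\, Y)$ for arbitrary $X, Y$, and one reads off the unit $\eta\colon \mathrm{Id} \to RU\circ LF$ and counit $\epsilon\colon LF\circ RU \to \mathrm{Id}$ from the derived unit and counit of $\varphi$, checking the triangle identities by again discarding $Q$ and $R$ up to weak equivalence; this produces the claimed derived adjunction $(LF,RU,R\varphi)$. The main obstacle is precisely the middle step, namely that $\varphi$ is compatible with the homotopy relation: this hinges on Ken Brown's Lemma together with the elementary fact that an adjoint transpose of a homotopy is again a homotopy, after which the rest is bookkeeping with the localization functors. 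A complete and careful execution of exactly this argument is \cite[Lemma 1.3.10]{hovey2}.
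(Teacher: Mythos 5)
The paper gives no proof of this lemma; it is quoted directly from Hovey's \cite[Lemma 1.3.10]{hovey2}, so there is no in-paper argument to compare against, and your outline faithfully reproduces Hovey's: reduce to hom-sets between a cofibrant source and a fibrant target, use Ken Brown's Lemma so that $F$ preserves cylinder objects of cofibrant objects and $U$ preserves path objects of fibrant objects, transpose homotopies across $\varphi$ and $\varphi^{-1}$, and then assemble unit, counit and triangle identities through $\mathrm{Ho}(Q)$ and $\mathrm{Ho}(R)$. One sentence is garbled, though: ``$f \simeq g$ in $\mathcal{M}'(FA,B)$ implies $Ff \simeq Fg$'' does not typecheck, since $f,g$ already live in $\mathcal{M}'$ while $F\colon\mathcal{M}\to\mathcal{M}'$; what you mean, and what your very next sentence uses correctly, is that for $A$ cofibrant $F$ carries left homotopies out of $A$ in $\mathcal{M}$ to left homotopies out of $FA$ in $\mathcal{M}'$ via the preserved cylinder $F\,\mathrm{Cyl}(A)$, so that transposing a left homotopy $\mathrm{Cyl}(A)\to UB$ across $\varphi^{-1}$ gives a left homotopy $F\,\mathrm{Cyl}(A)\to B$. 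With that slip corrected the argument is sound and is exactly Hovey's.
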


\begin{proposition}\label{prop-adjunction}
Let us consider the models $\Ch(R)_{\mathscr{D}\textrm{-}{\rm flat}}$ and $\mathcal{M}_2$ on $\Ch(R)$ given by the Hovey triples 
\begin{align*}
\Ch(R)_{\mathscr{D}\textrm{-}{\rm flat}} & = (\mathscr{GF}_{\mathscr{D}}(R),\mathcal{W}, {\rm dg}(\mathcal{C}(R))) & \text{and} & & \mathcal{M}_2 & = (\Ch(\mathcal{GF}_{\mathcal{B}}(R)),\mathcal{W}_2,{\rm dg}(\mathcal{GC}_{\mathcal{B}}(R))).
\end{align*}
Then ${\rm id}: \Ch(R)_{\mathscr{D}\textrm{-}{\rm flat}} \rightarrow \mathcal M_2 $ is a left Quillen functor. So there is a derived adjunction between 
\begin{align*}
[\mathscr{GF}_{\mathscr{D}}(R) \cap {\rm dg}(\mathcal{C}(R))] / \sim & & \text{and} & & \mathsf{D}(\mathcal{GF}_{\mathcal{B}}(R)).
\end{align*}
\end{proposition}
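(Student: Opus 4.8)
The plan is to verify that the identity functor $\mathrm{id}\colon \Ch(R)_{\mathscr{D}\textrm{-flat}} \to \mathcal{M}_2$ is a left Quillen functor by checking that its right adjoint (also the identity) is a right Quillen functor, and then invoke Lemma~\ref{aQ2} to obtain the derived adjunction. By Lemma~\ref{aQ1} it is equivalent, and technically more convenient, to show that $\mathrm{id}\colon \mathcal{M}_2 \to \Ch(R)_{\mathscr{D}\textrm{-flat}}$ preserves fibrations and trivial fibrations. Equivalently, since in an abelian model structure fibrations are the epimorphisms with kernel in the class of fibrant objects and trivial fibrations are the epimorphisms with kernel in the intersection of the fibrant and trivial classes, it suffices to establish the two containments
\[
{\rm dg}(\mathcal{GC}_{\mathcal{B}}(R)) \subseteq {\rm dg}(\mathcal{C}(R)) \quad\text{and}\quad {\rm dg}(\mathcal{GC}_{\mathcal{B}}(R)) \cap \mathcal{W}_2 \subseteq {\rm dg}(\mathcal{C}(R)) \cap \mathcal{W}.
\]

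First I would record the elementary inclusions $\mathscr{F}(R) \subseteq \mathscr{GF}_{\mathscr{D}}(R)$ and, dually, $\mathscr{GC}_{\mathscr{D}}(R) = (\mathscr{GF}_{\mathscr{D}}(R))^\perp \subseteq (\mathscr{F}(R))^\perp = \mathscr{C}(R)$; combined with the fact (Lemma~\ref{relation.complexes}) that $\mathscr{GF}_{\Ch(\mathcal{B})}(R) \subseteq \Ch(\mathcal{GF}_{\mathcal{B}}(R))$ and the hypothesis $\mathscr{D} \supseteq \Ch(\mathcal{B})$ (so $\mathscr{GF}_{\mathscr{D}}(R) \subseteq \mathscr{GF}_{\Ch(\mathcal{B})}(R) \subseteq \Ch(\mathcal{GF}_{\mathcal{B}}(R))$), this yields $\mathscr{GF}_{\mathscr{D}}(R) \subseteq \Ch(\mathcal{GF}_{\mathcal{B}}(R))$ and, taking right orthogonals, $\mathcal{GC}_{\mathcal{B}}(R)$-related data sits inside $\mathscr{C}(R)$-related data at the level of the ``$\mathrm{dg}$'' classes. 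Concretely, the defining condition for $Y \in {\rm dg}(\mathcal{GC}_{\mathcal{B}}(R))$ is that $Y \in \Ch(\mathcal{GC}_{\mathcal{B}}(R))$ and $\mathcal{H}{\rm om}(X,Y)$ is exact for all exact $X$ with cycles in $\mathcal{GF}_{\mathcal{B}}(R)$; since every exact complex with flat cycles is such an $X$ (flat modules being Gorenstein $\mathcal{B}$-flat), this forces $Y \in {\rm dg}(\mathcal{C}(R))$, which is exactly the first containment. This is the conceptual heart and should go through cleanly once the orthogonality bookkeeping is set up.

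For the second containment, I would argue at the level of Hovey triples: the trivial objects $\mathcal{W}_2$ of $\mathcal{M}_2$ are characterised by $\Ch(\mathcal{GF}_{\mathcal{B}}(R)) \cap \mathcal{W}_2 = {\rm ex}(\mathcal{GF}_{\mathcal{B}}(R))$ and ${\rm dg}(\mathcal{GC}_{\mathcal{B}}(R)) \cap \mathcal{W}_2 = \widetilde{\mathcal{GC}}_{\mathcal{B}}(R)$ (the exact complexes in $\mathcal{GC}_{\mathcal{B}}(R)$ with cycles in $\mathcal{GC}_{\mathcal{B}}(R)$), while the trivial objects $\mathcal{W}$ of $\Ch(R)_{\mathscr{D}\textrm{-flat}}$ satisfy ${\rm dg}(\mathcal{C}(R)) \cap \mathcal{W} = \widetilde{\mathcal{C}(R)}$, the exact complexes with cotorsion cycles. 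So I must show that an exact complex with all cycles in $\mathcal{GC}_{\mathcal{B}}(R)$ has, in fact, cotorsion cycles; but a cycle $Z$ of such a complex lies in $\mathcal{GC}_{\mathcal{B}}(R) \cap (\text{image in a short exact sequence of fibrant objects})$, and one concludes $Z \in \mathcal{C}(R)$ from $\mathcal{GC}_{\mathcal{B}}(R) \subseteq \mathcal{C}(R)$ together with the compatibility equality $\mathscr{GF}_{\mathscr{B}}(R) \cap \mathscr{GC}_{\mathscr{B}}(R) = \mathscr{F}(R) \cap \mathscr{C}(R)$ and hereditariness of the cotorsion pairs involved (the same style of argument as in Proposition~\ref{prop:comp_condition}).

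The main obstacle I anticipate is being careful about which ambient ``$\mathrm{dg}$'' and ``$\mathrm{ex}$'' classes refer to $\mathcal{GF}_{\mathcal{B}}$ versus $\mathscr{GF}_{\mathscr{D}}$, and confirming that the exact-complex test objects used to define ${\rm dg}(\mathcal{GC}_{\mathcal{B}}(R))$ form a superset of those defining ${\rm dg}(\mathcal{C}(R))$ — this is precisely the inclusion $\mathscr{GF}_{\mathscr{D}}(R) \subseteq \Ch(\mathcal{GF}_{\mathcal{B}}(R))$ restricted to cycles, which I established above. Once both containments are in hand, $\mathrm{id}$ sends fibrations of $\mathcal{M}_2$ (epimorphisms with kernel in ${\rm dg}(\mathcal{GC}_{\mathcal{B}}(R))$) to fibrations of $\Ch(R)_{\mathscr{D}\textrm{-flat}}$, and likewise for trivial fibrations; hence $\mathrm{id}\colon \mathcal{M}_2 \to \Ch(R)_{\mathscr{D}\textrm{-flat}}$ is right Quillen, so by Lemma~\ref{aQ1} the adjunction $(\mathrm{id},\mathrm{id})$ is a Quillen adjunction in the stated direction, and Lemma~\ref{aQ2} produces the derived adjunction between ${\rm Ho}(\Ch(R)_{\mathscr{D}\textrm{-flat}}) \simeq [\mathscr{GF}_{\mathscr{D}}(R) \cap {\rm dg}(\mathcal{C}(R))]/\!\sim$ and ${\rm Ho}(\mathcal{M}_2) \simeq \mathsf{D}(\mathcal{GF}_{\mathcal{B}}(R))$, completing the proof.
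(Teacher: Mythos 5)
Your overall strategy is sound and, in principle, equivalent to the paper's: you verify that $\mathrm{id}\colon \mathcal{M}_2 \to \Ch(R)_{\mathscr{D}\textrm{-flat}}$ is right Quillen by checking preservation of fibrations and trivial fibrations, while the paper checks directly that $\mathrm{id}\colon \Ch(R)_{\mathscr{D}\textrm{-flat}} \to \mathcal{M}_2$ preserves cofibrations (via $\mathscr{GF}_{\mathscr{D}}(R) \subseteq \Ch(\mathcal{GF}_{\mathcal{B}}(R))$) and trivial cofibrations (via $\mathscr{F}(R) \subseteq \widetilde{\mathcal{GF}}_{\mathcal{B}}(R)$). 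These two verifications are mirror images across the orthogonality of the two cotorsion pairs in each Hovey triple, so either route is legitimate by Lemma~\ref{aQ1}, and your argument for the first containment ${\rm dg}(\mathcal{GC}_{\mathcal{B}}(R)) \subseteq {\rm dg}(\mathcal{C}(R))$ is correct.

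However, there is a genuine error in your second containment: you misidentify both trivially fibrant classes. In a Hovey triple $(\mathcal{Q}, \mathcal{W}, \mathcal{R})$ the trivially fibrant objects are $\mathcal{R} \cap \mathcal{W} = \mathcal{Q}^\perp$. For $\Ch(R)_{\mathscr{D}\textrm{-flat}}$ this gives ${\rm dg}(\mathcal{C}(R)) \cap \mathcal{W} = (\mathscr{GF}_{\mathscr{D}}(R))^\perp = \mathscr{GC}_{\mathscr{D}}(R)$, \emph{not} $\widetilde{\mathcal{C}(R)}$: the latter is the right orthogonal of ${\rm dg}(\mathcal{F}(R))$ and is the trivially fibrant class of the degreewise-flat model, not of the Gorenstein $\mathscr{D}$-flat model. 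Likewise for $\mathcal{M}_2$ one has ${\rm dg}(\mathcal{GC}_{\mathcal{B}}(R)) \cap \mathcal{W}_2 = (\Ch(\mathcal{GF}_{\mathcal{B}}(R)))^\perp$, which is not $\widetilde{\mathcal{GC}}_{\mathcal{B}}(R)$ (the latter is $({\rm dg}(\mathcal{GF}_{\mathcal{B}}(R)))^\perp$). With these misidentifications, your proposed inclusion $\widetilde{\mathcal{GC}}_{\mathcal{B}}(R) \subseteq \widetilde{\mathcal{C}(R)}$, while true in isolation, is not the containment that is needed, and it does not imply the needed one: e.g.\ $\widetilde{\mathcal{C}(R)}$ contains exact complexes with cotorsion cycles that need not lie in $\mathscr{GC}_{\mathscr{D}}(R)$, since a Gorenstein $\mathscr{D}$-flat complex need not be degreewise flat. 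Once the classes are fixed, though, the correct inclusion $(\Ch(\mathcal{GF}_{\mathcal{B}}(R)))^\perp \subseteq (\mathscr{GF}_{\mathscr{D}}(R))^\perp$ falls out immediately by taking right orthogonals of $\mathscr{GF}_{\mathscr{D}}(R) \subseteq \Ch(\mathcal{GF}_{\mathcal{B}}(R))$ — exactly the same containment, from Lemma~\ref{relation.complexes}, that the paper uses on the cofibration side — and the invocation of the compatibility equality $\mathscr{GF}_{\mathscr{B}}(R) \cap \mathscr{GC}_{\mathscr{B}}(R) = \mathscr{F}(R) \cap \mathscr{C}(R)$ and cycle-by-cycle reasoning is unnecessary and would not close the gap.
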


\begin{proof}
First of all, it is clear that the functor ${\rm id} \colon \mathcal{M}_2 \rightarrow \Ch(R)_{\mathscr{D}\textrm{-flat}}$ is a right adjoint functor of ${\rm id} \colon \Ch(R)_{\mathscr{D}\textrm{-flat}} \rightarrow \mathcal{M}_2$. To prove that ${\rm id} \colon \Ch(R)_{\mathscr{D}\textrm{-flat}} \rightarrow \mathcal{M}_2$ is a left Quillen functor, we need to show that a cofibration (resp. a trivial cofibration) in $\Ch(R)_{\mathscr{D}\textrm{-flat}}$ is also a cofibration in $\mathcal{M}_2$ (resp. a trivial cofibration in $\mathcal{M}_2$). 

Let us first show the claim for cofibrant maps. A cofibration in the model $\Ch(R)_{\mathscr{D}\textrm{-flat}}$ is a monomorphism with cokernel a Gorenstein $\mathscr{D}$-flat complex (that is, a complex in $\mathscr{GF}_{\mathscr{D}}(R)$), and a cofibration in the model $\mathcal{M}_2$ is a monomorphism with cokernel a complex of Gorenstein $\mathcal{B}$-flat modules (i.e. a complex in $\Ch(\mathcal{GF}_{\mathcal{B}}(R))$). By Lemma \ref{relation.complexes} we have the containment $\mathscr{GF}_{\mathscr{D}}(R) \subseteq \Ch(\mathcal{GF}_{\mathcal{B}}(R))$. Hence the claim follows. 

Let us see the case of trivial cofibrant maps. But a trivial cofibration in the model $\Ch(R)_{\mathscr{D}\textrm{-flat}}$ is a monomorphism with cokernel a flat complex and a trivial cofibration in the model $\mathcal{M}_2$ is a monomorphism with cokernel in $\widetilde{\mathcal{GF}}(R)$, so the statement follows since $\mathscr{F}(R) \subseteq \widetilde{\mathcal{GF}}_{\mathcal{B}}(R)$.

So  ${\rm id} \colon \Ch(R)_{\mathscr{D}\textrm{-flat}} \rightarrow \mathcal{M}_2$ is a left Quillen functor, and then by Lemma \ref{aQ1}, we have that ${\rm id} \colon \mathcal{M}_2\rightarrow \Ch(R)_{\mathscr{D}\textrm{-flat}}$ is a right Quillen functor. Finally from Lemma~\ref{aQ2} we have a derived adjunction $(L({\rm id}),R({\rm id}))$ given by the total left derived functor 
\[
L({\rm id}) \colon [\mathscr{GF}_{\mathscr{D}}(R) \cap {\rm dg}(\mathcal{C}(R))] / \!\!\sim\, \rightarrow  \mathsf{D}(\mathcal{GF}_{\mathcal{B}}(R))
\] 
and the total right derived functor 
\[
R({\rm id}) \colon \mathsf{D}(\mathcal{GF}_{\mathcal{B}}(R)) \rightarrow [\mathscr{GF}_{\mathscr{D}}(R) \cap {\rm dg}(\mathcal{C}(R))] / \!\!\sim.
\]
\end{proof}

%%%%%%%%%%%%%%%%%%%%%%%%%%%%%%%%%%%%%
%%%%%%%%%%%%%%%%%%%%%%%%%%%%%%%%%%%%%

\subsection*{\textbf{Comparison between the $\mathcal B$-flat model and the induced degreewise flat model in $\Ch(R)$}}

From now on we shall consider a class $\mathcal B$ of right $R$-modules in the assumptions of Proposition \ref{dwGBflat.complexes}. Then, as in Proposition \ref{dwGBflat.complexes}, $\widehat{\mathcal B}$ will denote its associated class of complexes. For example, we can think in $\mathcal B$ as the class of injective right $R$-modules, so then $\widehat{\mathcal B}$ coincides with the class of injective complexes.

The flat cotorsion pair $(\mathcal F(R),\mathcal C(R))$ in $\Mod(R)$ induces a degreewise model structure in $\Ch(R)$, denoted by  $\Ch(R)_{{\rm dw}\textrm{-}{\rm flat}} $, and given by the triple: 
\[
\Ch(R)_{{\rm dw}\textrm{-}{\rm flat}}=(\Ch(\mathcal{F}(R)), \mathcal{V}, {\rm dg}(\mathcal{C}(R))).
\] 
Its homotopy category is the derived category $\mathsf D(\mathcal{F}(R))$ of flat complexes and it is triangle equivalent to the Verdier quotient
\[
\frac{\mathsf{K}(\mathcal{F}(R))}{\widetilde{\mathcal{F}}(R)}.
\] (here $\widetilde{\mathcal{F}}(R)$ are the flat complexes as a localizing subcategory of $\mathsf{K}(\mathcal{F}(R))$).

Under the assumptions of Proposition \ref{dwGBflat.complexes} we get that $\mathscr{GF}_{\widehat{\mathcal B}}=\Ch(\mathcal{GF}_{{\mathcal B}}(R))$. Therefore in this case Theorem \ref{theo:GBF_model} gives the following model in $\Ch(R)$: 
\[
\Ch(R)_{\widehat{\mathcal{B}}\textrm{-flat}} = (\Ch(\mathcal{GF}_{{\mathcal{B}}}(R)),\mathcal{W}, {\rm dg}(\mathcal{C}(R))).
\] 
Since the containment $\Ch(\mathcal F(R))\subseteq \Ch(\mathcal{GF}_{{\mathcal B}}(R))$ always holds, and the trivially cofibrant objects of the two models agree (the class of flat complexes), the same argument of Proposition \ref{prop-adjunction} applies to show that the identity functor is a Quillen adjunction between the two models:

\begin{proposition}
The identity functor ${\rm id}:\Ch(R)_{{\rm dw}\textrm{-}{\rm flat}}  \rightarrow \Ch(R)_{\widehat{\mathcal{B}}\textrm{-flat}} $ is a left Quillen functor. So there is a derived adjunction between 
\begin{align*}
\mathsf D(\mathcal{F}(R)) & & \text{and} & & [\mathscr{GF}_{\widehat{\mathcal B}}(R) \cap {\rm dg}(\mathcal{C}(R))] / \sim.
\end{align*}
\end{proposition}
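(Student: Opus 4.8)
The plan is to verify that the identity functor ${\rm id}\colon \Ch(R)_{{\rm dw}\textrm{-}{\rm flat}} \to \Ch(R)_{\widehat{\mathcal{B}}\textrm{-flat}}$ is a left Quillen functor and then to read off the derived adjunction from Lemmas~\ref{aQ1} and \ref{aQ2}, following the blueprint of the proof of Proposition~\ref{prop-adjunction}. Since both functors in play are the identity of $\Ch(R)$, the adjunction $({\rm id},{\rm id},\varphi)$ with $\varphi$ the canonical bijection of Hom-sets is automatic, so the only content is that ${\rm id}\colon \Ch(R)_{{\rm dw}\textrm{-}{\rm flat}} \to \Ch(R)_{\widehat{\mathcal{B}}\textrm{-flat}}$ sends cofibrations to cofibrations and trivial cofibrations to trivial cofibrations. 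I would first recall, from the description of abelian model structures at the start of Section~\ref{sec:stable}, that in $\Ch(R)_{{\rm dw}\textrm{-}{\rm flat}} = (\Ch(\mathcal{F}(R)),\mathcal{V},{\rm dg}(\mathcal{C}(R)))$ a (trivial) cofibration is a monomorphism with cokernel in $\Ch(\mathcal{F}(R))$ (resp.\ in $\Ch(\mathcal{F}(R)) \cap \mathcal{V}$), whereas in $\Ch(R)_{\widehat{\mathcal{B}}\textrm{-flat}} = (\Ch(\mathcal{GF}_{\mathcal{B}}(R)),\mathcal{W},{\rm dg}(\mathcal{C}(R)))$ a (trivial) cofibration is a monomorphism with cokernel in $\Ch(\mathcal{GF}_{\mathcal{B}}(R)) = \mathscr{GF}_{\widehat{\mathcal{B}}}(R)$ (resp.\ in $\Ch(\mathcal{GF}_{\mathcal{B}}(R)) \cap \mathcal{W}$), the equality $\mathscr{GF}_{\widehat{\mathcal{B}}}(R) = \Ch(\mathcal{GF}_{\mathcal{B}}(R))$ being Proposition~\ref{dwGBflat.complexes}.

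For cofibrations, preservation reduces to the containment $\Ch(\mathcal{F}(R)) \subseteq \Ch(\mathcal{GF}_{\mathcal{B}}(R))$, which always holds: it is a degreewise condition and every flat module is Gorenstein $\mathcal{B}$-flat (apply Lemma~\ref{characterisations}(c) to $0 \to F \to F \to 0 \to 0$). For trivial cofibrations I would identify both classes of trivially cofibrant objects with the class $\mathscr{F}(R)$ of (categorically) flat complexes. On the $\widehat{\mathcal{B}}$-flat side, $\mathscr{GF}_{\widehat{\mathcal{B}}}(R) \cap \mathcal{W} = \mathscr{F}(R)$: this is forced by the compatibility equality $\mathscr{F}(R) \cap \mathscr{C}(R) = \mathscr{GF}_{\widehat{\mathcal{B}}}(R) \cap \mathscr{GC}_{\widehat{\mathcal{B}}}(R)$ of the compatibility proposition for relative Gorenstein flat complexes, together with the bookkeeping of the thick class in the Hovey correspondence \cite[Main Theorem 1.2]{GillespieTriple}, which identifies $\mathscr{GF}_{\widehat{\mathcal{B}}}(R) \cap \mathcal{W}$ with the left half $\mathscr{F}(R)$ of the smaller cotorsion pair. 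On the ${\rm dw}$-flat side, $\Ch(\mathcal{F}(R)) \cap \mathcal{V} = \mathscr{F}(R)$ as well, as recorded by the identification ${\rm Ho}(\Ch(R)_{{\rm dw}\textrm{-}{\rm flat}}) \cong \mathsf{K}(\mathcal{F}(R))/\widetilde{\mathcal{F}}(R)$ with $\widetilde{\mathcal{F}}(R)$ the flat complexes. As the two trivially cofibrant classes coincide, a monomorphism with flat cokernel is a trivial cofibration in both models, so trivial cofibrations are preserved.

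With ${\rm id}\colon \Ch(R)_{{\rm dw}\textrm{-}{\rm flat}} \to \Ch(R)_{\widehat{\mathcal{B}}\textrm{-flat}}$ now known to be left Quillen, Lemma~\ref{aQ1} gives that $({\rm id},{\rm id},\varphi)$ is a Quillen adjunction (equivalently, its right adjoint ${\rm id}\colon \Ch(R)_{\widehat{\mathcal{B}}\textrm{-flat}} \to \Ch(R)_{{\rm dw}\textrm{-}{\rm flat}}$ is right Quillen), and Lemma~\ref{aQ2} yields the derived adjunction $(L({\rm id}),R({\rm id}))$ between ${\rm Ho}(\Ch(R)_{{\rm dw}\textrm{-}{\rm flat}}) = \mathsf{D}(\mathcal{F}(R))$ and ${\rm Ho}(\Ch(R)_{\widehat{\mathcal{B}}\textrm{-flat}}) = [\mathscr{GF}_{\widehat{\mathcal{B}}}(R) \cap {\rm dg}(\mathcal{C}(R))]/\!\!\sim$, which is the assertion. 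I expect the main obstacle to be purely bookkeeping: reconciling the various descriptions of the classes involved — that the trivially cofibrant objects of both models are literally $\mathscr{F}(R)$, and that the fibrant class $\mathscr{C}(R)$ of Theorem~\ref{GBflatmodel.complexes} agrees with ${\rm dg}(\mathcal{C}(R))$ via the classical equality $(\mathscr{F}(R))^{\perp} = {\rm dg}(\mathcal{C}(R))$. Granting these identifications, the argument is a verbatim transcription of the proof of Proposition~\ref{prop-adjunction}.
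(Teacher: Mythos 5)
Your proof is correct and follows essentially the same route as the paper, which simply notes the containment $\Ch(\mathcal{F}(R)) \subseteq \Ch(\mathcal{GF}_{\mathcal{B}}(R))$ and the coincidence of the two trivially cofibrant classes (both equal $\mathscr{F}(R)$) and then invokes the argument of Proposition~\ref{prop-adjunction}; you merely spell out the bookkeeping that the paper leaves implicit.
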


%%%%%%%%%%%%%%%%%%%%%%%%%%%%%%%%%%%%%
%%%%%%%%%%%%%%%%%%%%%%%%%%%%%%%%%%%%%
%%%%%%%%%%%%%%%%%%%%%%%%%%%%%%%%%%%%%
%%%%%%%%%%%%%%%%%%%%%%%%%%%%%%%%%%%%%

\appendix
\section{The double dual of a definable class of chain complexes}

This appendix concerns to some comments on the proof of Lemma~\ref{lem:Prest}. The case where $\mathcal{D}$ is a definable class of modules follows by Mehdi and Prest's \cite[Corollary 4.6]{MehdiPrest}, but it is also a consequence of the theory of pp-pairs and pp-formulas (see Prest's \cite[Section 3.4.2]{PrestPurity}).

Definable classes have an equivalent definition in terms of pp-pairs and pp-formulas (see \cite[Theorem 10.1]{PrestMultisorted}), in settings more general than modules over a ring: namely, modules over a skeletally small preadditive category. In particular, pp-pairs and pp-formulas can be  interpreted in the setting of chain complexes, as complexes can be regarded as representations of the quiver 
\[
A^\infty_\infty = \cdots \to \cdot \to \cdot \to \cdot \to \cdots
\]
Thus if $\mathcal{D}$ is a definable category of chain complexes then there will be a set $\Phi$ of pp-pairs such that $\mathcal{D}$ consists of exactly those complexes where all the pairs in $\Phi$ are closed. 

The variables that appear in pp-formulas for complexes are \emph{sorted} in the sense of \cite{PrestMultisorted}, that is, (1) each variable ranges over elements located at a fixed vertex $i$ of the previous quiver, (2) there is a copy of $R$ acting as scalars at each vertex, and (3) there is a \emph{ringoid} (or \emph{ring with several objects}) element for each arrow. An example of a pp-formula is $\exists y_{i+1} \, (x_{i-1} =d_{i}d_{i+1}\,y_{i+1})$, where $d_i$ denotes the arrow $i \to i-1$. In the case of complexes, one has $x_{i-1} = 0$. 

Following this brief explanation of pp-formulas, it can be noted that each pp-formula (resp. each pp-pair) can be written as a pp-formula (or pp-pair) over a ring: namely, the path algebra of a finite portion of the quiver $A_\infty^\infty$ which contains all the vertices corresponding to sorts of variables or end points of arrows appearing. In other words, any pp-formula involves only a finite number of objects of the category, so is essentially over a ring. 

In order to check whether the equivalence $D \in \mathcal{D} \Longleftrightarrow D^{++} \in \mathcal{D}$ is true in Lemma~\ref{lem:Prest}, choose some set $\Phi$ of pp-pairs defining $\mathcal{D}$. Indeed, we know by \cite[Theorem 10.1]{PrestMultisorted} that there is a set $\Phi$ of pp-pairs for complexes such that 
\[
\mathcal{D} = \{ X \in \Ch(R) \mbox{ {\rm : }} \phi(X) / \psi(X) = 0, \mbox{ } \forall \mbox{ } \phi / \psi \in \Phi\}.
\]
Then, for each pair in $\Phi$ we must check that the pair is \emph{closed} (see \cite[Section 6]{PrestMultisorted}) on the double dual. We need to check one pair at a time, but this problem reduces to a problem over a ring $R$ (that is, over a finite-ringoid = 1-sorted ring). So closure on the double dual needs to be checked only on a finite number of the complexes involved in the problem. Hence, the theory of modules over a (1-sorted) ring can be applied, temporarily regarding or replacing the multi-but still finite-sorted pp-formulas by formulas over a normal ring $R$.

%%%%%%%%%%%%%%%%%%%%%%%%%%%%%%%%%%%%%
%%%%%%%%%%%%%%%%%%%%%%%%%%%%%%%%%%%%%
%%%%%%%%%%%%%%%%%%%%%%%%%%%%%%%%%%%%%
%%%%%%%%%%%%%%%%%%%%%%%%%%%%%%%%%%%%%

\section*{\textbf{Acknowledgements}}

Part of this work was carried out during research stays at the Centre International de Rencontres Math\'ematiques (Luminy, France) and at the Centro Internazionale per la Ricerca Matematica of the Fondazione Bruno Kessler in Trento (Italy), between June and July of 2017 as part of the \emph{Research in Pairs} program. We want to thank the staff from both institutions for their hospitality.

Also, special thanks to Mike Prest, who pointed out the validity of Lemma \ref{lem:Prest}, which was key in presenting many important results also in the context of chain complexes. 

Finally, we thank the referee for her/his corrections and suggestions.

%%%%%%%%%%%%%%%%%%%%%%%%%%%%%%%%%%%%%
%%%%%%%%%%%%%%%%%%%%%%%%%%%%%%%%%%%%%
%%%%%%%%%%%%%%%%%%%%%%%%%%%%%%%%%%%%%
%%%%%%%%%%%%%%%%%%%%%%%%%%%%%%%%%%%%%

\bibliographystyle{alpha}
\bibliography{biblioGF}
\end{document}